\documentclass{elsarticle_nonatbib}


\newif\ifpdfa


\ifpdfa
\usepackage[a-3u]{pdfx}
\fi
\let\epsilon=\varepsilon
\usepackage{algorithm}
\usepackage{algorithmic}
\usepackage{amsthm}
\usepackage{amssymb}
\usepackage{booktabs}
\usepackage{comment}
\usepackage{etoolbox}
\usepackage{float}
\usepackage{graphicx}
\usepackage{needspace}
\usepackage{textcomp}
\usepackage{siunitx}
\usepackage{tikz}
\usepackage[margin=1in]{geometry}
\usepackage{multirow}
\usepackage{hyperref}
\usepackage{mathtools}
\usepackage{adjustbox}
\usepackage{paralist}
\usepackage{microtype}

\usepackage[utf8]{inputenc}
\DeclareUnicodeCharacter{2212}{-}

\setlength{\textfloatsep}{1.5ex}
\setlength{\abovecaptionskip}{0.5ex}

\usetikzlibrary{calc}

\usepackage{import}

\usepackage[
  natbib = true,
    backend=biber,
    isbn=false,
    url=false,
    doi=true,
    eprint=false,
    style=numeric,
    sorting=nyt,
    sortcites = true
]{biblatex}
\bibliography{qbx-fmm.bib}


\renewcommand{\Re}{\mathop{\mathrm{Re}}%
}
\newtheorem{theorem}{Theorem}
\newtheorem{lemma}[theorem]{Lemma}
\newtheorem{proposition}[theorem]{Proposition}
\newtheorem{remark}[theorem]{Remark}
\newtheorem{definition}{Definition}

\newcommand{\ilist}[2]{
  \ifstrequal{#1}{1}{U_{#2}}{
  \ifstrequal{#1}{2}{V_{#2}}{
  \ifstrequal{#1}{3}{W_{#2}}{
  \ifstrequal{#1}{3close}{W^\mathrm{close}_{#2}}{
  \ifstrequal{#1}{3far}{W^\mathrm{far}_{#2}}{
  \ifstrequal{#1}{4}{X_{#2}}{
  \ifstrequal{#1}{4close}{X^\mathrm{close}_{#2}}{
  \ifstrequal{#1}{4far}{X^\mathrm{far}_{#2}}{}
}}%
}}%
}}%
}}

\def\Mpole{\mathsf{M}}
\def\Locfar{\mathsf{L}^{\text{far}}%
}
\def\Potnear{\mathsf{P}^{\text{near}}%
}
\def\PotW#1{\mathsf{P}^{\ilist{3}{}}_{#1}}
\def\Lqbxnear{\mathsf{L}^{q,\text{near}}%
}
\def\LqbxW#1{\mathsf{L}^{q,\ilist{3}{}}_{#1}}
\def\Lqbxfar{\mathsf{L}^{q,\text{far}}%
}

\newlength{\longrowlength}
\newlength{\figurewidth}
\setlength{\longrowlength}{0.8\textwidth}
\setlength{\figurewidth}{0.8\textwidth}

\newcommand{\nmax}{{n_{\mathrm{max}}%
}}

\newcommand{\pqbx}{{p_\mathrm{QBX}%
}}
\newcommand{\pfmm}{{p_\mathrm{FMM}%
}}
\newcommand{\pquad}{{p_\mathrm{quad}%
}}

\newcommand{\homebox}[1]{{b_#1}%
}
\newcommand{\closedbox}{\overline {B_\infty}}

\newcommand{\ancestors}{\mathsf{Ancestors}}
\newcommand{\descendants}{\mathsf{Descendants}}
\newcommand{\tcr}{\mathsf{TCR}}
\newcommand{\parent}{\mathsf{Parent}}
\newcommand{\adequatesep}{\prec}

\newcommand{\inlbfrac}[2]{(#1/#2)}

\newcommand{\algbrand}{GIGAQBX}
\newcommand{\belowtableskip}{\vspace*{1ex}}
\newcommand{\intertableskip}{\vspace*{4ex}}

\newenvironment{accfigadjust}{
    \begin{adjustbox}{clip,trim=0 0.4cm 0 0.4cm}
  }
  {
    \end{adjustbox}
  }


\begin{document}

\def\papertitle{A Fast Algorithm with Error Bounds for Quadrature~by~Expansion}

\title{\papertitle}

\auth[uiuc]{Matt Wala\corref{MW}}%
\ead{wala1@illinois.edu}
\auth[uiuc]{Andreas Kl\"ockner}%
\ead{andreask@illinois.edu}
\address[uiuc]{Department of Computer Science,
  University of Illinois at Urbana-Champaign, 201 North Goodwin Ave,
  Urbana, IL 61801}
\cortext[MW]{Corresponding author}

\begin{abstract}
Quadrature by Expansion (QBX) is a quadrature method for approximating
the value of the singular integrals encountered in the evaluation of layer potentials. It
exploits the smoothness of the layer potential by forming
locally-valid expansions which are then evaluated to compute the near
or on-surface value of the potential. Recent work towards coupling of a
Fast Multipole Method (FMM) to QBX yielded a first step towards the
rapid evaluation of such integrals (and the solution of related
integral equations), albeit with only empirically understood error
behavior. In this paper, we improve upon this approach with a modified
algorithm for which we give a comprehensive analysis of error and cost in the case
of the Laplace equation in two dimensions.  For the same levels of
(user-specified) accuracy, the new algorithm empirically has cost-per-accuracy
comparable to prior approaches.  We provide experimental results to demonstrate
scalability and numerical accuracy.
\end{abstract}

\begin{keyword}
fast algorithms, fast multipole method, integral equations, quadrature, singular
integrals
\end{keyword}


\maketitle

\section{Introduction}%
\label{sec:intro}
Integral equation methods for the solution of boundary value problems of partial
differential equations offer a number of numerically attractive properties,
including boundary-only discretizations for homogeneous problems,
seamless treatment of exterior problems, and mesh-independent conditioning.
However, their effective numerical realization presents a number of
technical challenges. A key prerequisite for the use of these methods
is the scalable and accurate evaluation of layer potentials on, near,
and away from the source layer. This in turn involves \emph{singular
and near-singular quadrature} and so-called \emph{fast algorithms}
(like the Fast Multipole Method) to facilitate the evaluation of
$O(N^2)$ interactions in linear or near-linear time. To maintain
accuracy and efficiency, both aspects need to be well-integrated, and,
as a unit, have well-understood error behavior.

The single layer and double layer potential integral operators $\mathcal{S}$,
$\mathcal{D}$ for the Laplace equation with boundary density
function $\mu(y): \Gamma \to \mathbb{R}$ are defined as
\begin{align}
  \label{eqn:slp-definition}
  \mathcal{S} \mu(x)
  &= -\frac{1}{2 \pi} \int_{\Gamma} \mu(y) \log{|y - x|} \, ds(y),\\
  \label{eqn:dlp-definition}
  \mathcal{D} \mu(x)
  &= -\frac{1}{2 \pi} \int_{\Gamma} \mu(y) \, \hat n \cdot \nabla_y \log{|y - x|} \, ds(y).
\end{align}
The capability of evaluating such layer potentials can be used for the solution
of homogeneous PDE boundary value problems. We demonstrate this by the following
example. Consider, for
specificity, the exterior Neumann problem in two dimensions for the Laplace
equation:
\begin{align}
  \notag
  \triangle u(x) &= 0\quad (x\in \mathbb R^2\setminus \Omega),\\
  \label{eq:laplace-bc}
  ( \hat n(x) \cdot \nabla u(y)) &\to g\quad (x\in \partial\Omega, y\to x_+),\\
  \notag
  u(x)&\to 0 \quad(x\to\infty),
\end{align}
where $\Omega$ is a bounded domain with a smooth boundary, $\hat n(x)$ is the
unit normal to $\partial \Omega$ at $x$, and $\lim_{y\to x_+}$ denotes a limit
approaching the boundary from the exterior of $\Omega$. Then, by
choosing $\Gamma=\partial \Omega$ and representing the solution $u$ in terms of a single layer
potential $u(x)=\mathcal S\mu(x)$ with an unknown density function $\mu$, we obtain that
the Laplace PDE and the far-field boundary condition are satisfied by $u$.
The remaining Neumann boundary condition becomes, by way of the well-known
jump relations for layer potentials (see~\cite[Theorem 6.28]{kress:2014:integral-equations})
the integral equation of the second kind
\begin{equation}
  -\frac \mu2 + \mathcal S'\mu = g.
  \label{eq:ext-neumann-ie}
\end{equation}
The boundary $\Gamma$ and density $\mu$ may then be discretized using piecewise
polynomials and, using the action of the normal derivative $\mathcal S'$ of $\mathcal S$ as supplied by our method
below, solved for the unknown density $\mu$. Once $\mu$ is known, the
representation of $u$ in terms of the single-layer potential~\eqref{eqn:slp-definition}
may be evaluated anywhere in $\mathbb R^2\setminus\Omega$, again using the
method described below, to obtain the sought solution $u$ of the boundary value
problem.

Quadrature by Expansion (`QBX') is an approach to singular and
near-singular quadrature in the setting of layer potential evaluation that is kernel- and
dimension-independent. QBX makes use of the fact that the layer potential is
analytic and accurately resolved via regular quadrature methods for smooth
functions (such as Gaussian quadrature)
when the target point is sufficiently far away from the surface.
Accuracy in the near or on-surface
regime is recovered through analytic continuation by ways of, e.g.\ a Taylor
(`local') expansion of the potential about a center in the well-resolved regime.

Since the earliest days of the numerical use of integral equation
methods~\cite{atkinson1976survey}, acceleration of the otherwise quadratic (in
the number of degrees of freedom) runtime of the associated matrix-vector
products has been a concern. If no acceleration is used, the integrals
of~\eqref{eqn:slp-definition} and~\eqref{eqn:dlp-definition} must be evaluated
from scratch for each of $O(N)$ target points, where each such integration
involves evaluation of the integrand at $O(N)$ quadrature nodes. Acceleration
approaches range from custom methods based on the hierarchical decomposition
of curves~\cite{rokhlin:1985:fmm-without-tree} to evaluation
methods~\cite{christlieb2006grid} based on
Barnes-Hut-style~\cite{barnes1986hierarchical} tree codes. In these methods, the coexistence of
quadrature and acceleration is a pervasive concern. When used to solve PDE BVPs
as described above, layer potential evaluation may be viewed as two distinct
tasks: First, evaluation of the potential on the surface itself as required for
the solution of the integral equation to obtain the density, and, second,
evaluation of the potential in the volume to obtain the actual solution of the boundary value problem.
Kussmaul-Martensen quadrature~\cite{kussmaul1969numerisches,martensen1963methode}
as a representative of singularity subtraction techniques,
the polar coordinate transform~\cite{kress:2014:integral-equations}
as a representative of singularity cancellation techniques,
or Generalized Gaussian quadrature~\cite{yarvin_generalized_1998} are examples
of a quadrature scheme only suited to the evaluation of (weakly) singular on-surface
layer potential integrals.

Meanwhile, the evaluation of layer potentials in the volume is in principle
straightforward as no singular integrals are involved, for example by adaptive
quadrature. Careful management of accuracy that avoids dramatic performance
degradation however is less straightforward to achieve~\cite{helsing_2008a,
biros_embedded_2004}. QBX, as a quadrature
scheme, unifies on-surface and off-surface
evaluation~\cite{barnett:2014:close-eval} with only minor accommodations, and
we would like to retain this feature of the method in its accelerated version.

Beyond this overview, we will not attempt to review the vast
literature on singular quadrature (e.g.~\cite{helsing_2008b,
lowengrub_1993,goodman_1990,haroldson_1998,mayo_1985,beale_lai_2001,davis_1984,hackbusch_sauter_1994,graglia_2008,
jarvenpaa_2003,schwab_1992,khayat_2005,bruno_2001,ying_2006,bremer_nonlinear_2010,farina_2001,strain_1995,johnson_1989,
sidi_1988,carley_2007,atkinson_1995,lyness_numerical_1967,chapko_numerical_2000,hao_high-order_2014}).
We instead refer the reader to~\cite{klockner:2013:qbx} for a rough overview.
Here, we will continue by
focusing instead on approaches to combining singular quadrature with a fast
algorithm into a single scheme.

The use of hierarchically-based fast algorithms for the evaluation of layer
potentials considerably predates the Fast Multipole Method itself, such as
Rokhlin's early work aimed squarely at accelerated
quadrature~\cite{rokhlin:1985:fmm-without-tree}. Within the framework of the
Fast Multipole Method, quadrature methods that require special treatment of near
interactions often proceed by replacing the direct interactions (of `List 1' in
the FMM) with their own procedure. Unfortunately, no guarantees of geometric
separation between source and target can be derived from membership in List 1, and so methods
requiring this may subtract out unwanted interactions already mediated by the
FMM, at an additional computational cost.

Within the realm of the acceleration of QBX, early
work~\cite{klockner:2013:qbx,epstein:2013:qbx-error-est} remarked on the
apparent ease with which QBX might be integrated into Fast Multipole-type
algorithms, by slightly modifying the algorithm to yield local expansions
containing contributions from the entire source geometry in what has come to be
called \emph{global QBX}.  First steps towards the realization of such an
integration were soon made, first in unpublished work. These early attempts were
plagued by uncontrolled and poorly-understood accuracy issues. An initial
approach to recovering accuracy through an increase of the FMM
order~\cite{rachh:2017:qbx-fmm,rachh2015integral} succeeded, but provided only
empirical evidence for its accuracy.  We refer to this order-increase scheme as
the `conventional QBX FMM' throughout this text. QBKIX~\cite{rahimian:2017:qbkix}
(Quadrature by Kernel-Independent Expansion) emerged as a related, global
QBX-based numerical method that is built upon the machinery of
kernel-independent Fast Multipole Methods~\cite{ying2004kernel}.

QBX may also be operated as a local correction applied in the near-field of an
FMM, as
described above. These schemes, broadly classified as \emph{local
QBX}~\cite{rachh2015integral,rachh2018fast}, are algorithmically much simpler
than global QBX since a fast algorithm for point potential evaluation may be used largely unmodified. However,
to allow the transition between QBX-regularized near-field and point-potential
far-field to occur without loss of accuracy, schemes based on local QBX
generically require very high QBX expansion orders, which in turn requires a
large amount of oversampling.  Recent work~\cite{siegel2017local} has been
seeking to mitigate the computational cost of this effect.

This contribution is concerned with presenting a version of a global QBX-based
FMM coupling that provides \emph{rigorous error bounds}, thus providing one
approach for a compatible coupling of a singular quadrature rule with acceleration.
To accomplish this, we make substantial modifications to the Fast
Multipole Algorithm itself. We list these comprehensively in Section~\ref{sec:algorithm}.
Some versions of some of these modifications have been discussed in the
literature, though in contexts unrelated to layer potential evaluation.
For example, we restrict the set of allowable multipole-to-local
translations to be between boxes separated by a distance of at least twice their
own size. Greengard already discusses the possibility of such a modification, in
the context of a three-dimensional generalization of the FMM, in his thesis
work~\cite{greengard:1988:thesis}. We also introduce the notion of sizing for
targets, to accommodate the unique requirements of global QBX centers. A related
need emerged in the chemical physics community, where Coulomb interactions
between (extent-bearing) `clouds' of charge need to be
evaluated~\cite{white_continuous_1994}, though the algorithm ultimately
constructed is substantially different from ours.

We refer to our algorithm as \emph{\algbrand} (for `GeometrIc
Global Accelerated QBX'), to contrast with prior versions of the scheme.
In this paper, we take the point of view that the cumulative error
in an accelerated QBX scheme effectively splits into three additive components:
\begin{equation}
\label{eqn:error-splitting}
|\text{accelerated QBX error}| \leq |\text{truncation error}| + |\text{quadrature error}| +
|\text{FMM error}|.
\end{equation}
Here, \emph{truncation error} refers to the analytical truncation error, and
\emph{quadrature error} is error in evaluating the QBX-regularized integral using
quadrature. The \emph{FMM error} refers to the FMM's achieved accuracy in approximating
the output of unaccelerated QBX (see Section~\ref{sec:motivation} for
details). We choose to split error contributions in this way, rather than relying on direct
approximation of the layer potential by the FMM\@. This interpretation allows us to rely on
the existing body of work establishing bounds on the truncation and
quadrature components of the error
(e.g.~\cite{epstein:2013:qbx-error-est,rachh:2017:qbx-fmm,afklinteberg:2016:quadrature-est,
afklinteberg:2017:adaptive-qbx}).  A complicating factor for the
FMM error analysis is that traditional FMM error estimates
apply only to the approximation of potentials at point-shape targets, whereas
our version of these estimates must account for the approximation of a \emph{QBX
local expansion} and \emph{its} accuracy when evaluated as an approximation the
potential. The FMM error in this setting is not well-studied.


\section{Overview and Motivation}%
\label{sec:motivation}
In this section, after reviewing the basic operating principle and convergence theory
of QBX in Section~\ref{sec:qbx}, we summarize recent progress made
towards making global, unaccelerated QBX geometrically robust~\cite{rachh:2017:qbx-fmm}
which we continue to leverage (see Section~\ref{sec:qbx-geometry-preprocessing}).
We summarize the approach taken toward
acceleration in the same article for comparison and to contrast with
our version in Section~\ref{sec:qbx-fast-alg}. We assume that the
reader has some familiarity with Fast Multipole methods but not necessarily the
details of~\cite{rachh:2017:qbx-fmm}. The major
difference between our new algorithm and the algorithm
of~\cite{rachh:2017:qbx-fmm} occurs in the handling of QBX centers within the
FMM\@. The motivation for how we handle QBX centers is detailed in
Section~\ref{sec:qbx-experimental-accuracy}.

FMMs conventionally evaluate potentials from point sources. In the remainder of
this paper, we will often refer to that family of algorithms as \emph{point
  FMMs}, especially to distinguish from our intended task, which is the
evaluation of potentials originating from a layer $\Gamma$, i.e.\ layer
potentials.

\subsection{Quadrature by Expansion}%
\label{sec:qbx}

We consider the problem of evaluating the Laplace single layer
potential over a smooth simple closed curve $\Gamma$ in a region
that is close to the boundary itself (which may include points on the boundary).
This is done solely to simplify and focus the discussion; as with many
other parts of the QBX literature, our approach generalizes
straightforwardly to many more layer potentials and kernels.
We discretize the integrals of~\eqref{eqn:slp-definition} and~\eqref{eqn:dlp-definition}
by subdividing $\Gamma$ into disjoint pieces $\Gamma_k$,
each parametrized by $\gamma_k:[-1,1] \to \Gamma_k$,
and by using Gaussian quadrature of a fixed node count $\pquad$ on each piece. Because of the
nearly or weakly singular behavior of the integrand, Gaussian
quadrature will not yield accurate results when the target point $x$
is near the boundary $\Gamma$. The precise
width of this neighborhood depends on both the quadrature node count $\pquad$ and the
panel length $h$ but it is usually on the order of a panel length.
For a thorough experimental account of this phenomenon,
see the first section of~\cite{klockner:2013:qbx}. By
contrast, when $x$ is far enough away from the boundary, the integrand
is smooth and the integral may be easily evaluated to high accuracy
with a smooth quadrature rule.

Roughly speaking, QBX extends the `high accuracy' quadrature region by making use of
the fact that the potential is \emph{analytic} in the set $\mathbb{R}^2
\setminus \Gamma$. QBX proceeds in two stages:

\emph{First stage.} First, Taylor expansion centers are placed away from the boundary $\Gamma$ in the
high accuracy region. Let $c$ be the expansion center associated with a
target/evaluation point $x$.
For simplicity, in the remainder of the paper we will identify $\mathbb{R}^2$
with $\mathbb{C}$ and make use of the complex logarithm, which satisfies $\Re \log(z) = \log |z|$
for all $z \in \mathbb{C} \setminus \{0\}$.
We truncate the Taylor expansion and write the $\pqbx$-th order
expansion as follows, interchanging the order of summation and integration:
\begin{align*}
\mathcal{S} \mu(x)
\approx -\frac{1}{2 \pi} \Re \sum_{n=0}^\pqbx
  \left[ \frac{1}{n!} \int_{\Gamma} \mu(y)
  \left(\frac{d^n}{dc^n} \log(y - c) \right) \, ds(y) \right]
{(x-c)}^n.
\end{align*}

\emph{Second stage.} The occurring (nonsingular) integrals are discretized
using (in our case Gaussian) quadrature. Let ${\{y_i\}}_{i=1}^N$ be the
set of quadrature nodes with weights and arc length elements
${\{w_i\}}_{i=1}^N$. Then the formula for QBX is
\begin{equation}
\label{eq:qbx}
\mathcal{S} \mu(x)
\approx
\mathcal{S}_{\text{QBX}(\pqbx, N)} \mu(x)
:=
- \frac{1}{2 \pi} \Re \sum_{n=0}^\pqbx
  \left[ \frac{1}{n!} Q_N\left\{ \int_\Gamma \mu(y)
  \left(\frac{d^n}{dc^n} \log(y - c) \right)
  ds(y)
  \right\}
  \right]
{(x-c)}^n,
\end{equation}
where $Q_N$ denotes the approximate computation of an integral
of a smooth function through the application of a
quadrature rule:
\[
  Q_N\left\{\int_\Gamma f(y)ds(y)\right\}= \sum_{i=1}^N w_i f(y_i).
\]

For purposes of solving integral equations, one is mostly interested in the case
where $x \in \Gamma$. QBX handles this case with high order accuracy, provided
that $\mu$ and $\Gamma$ are smooth~\cite{epstein:2013:qbx-error-est}. More precisely, we state
the error estimates in the case where the quadrature is composite Gaussian
quadrature over panels of equal length $h$. Let $r$ be the expansion radius for
the Taylor expansion. Let $\overline{B}(c,r)$ denote the closed disk of radius $r$ centered
at $c$. Assume that $x \in (\overline{B}(c, r) \cap \Gamma)$ and that $\Gamma \setminus
\{x\}$ does not intersect $\overline{B}(c, r)$. Let $\pquad$ denote the node count of the Gaussian
quadrature. Then it can be shown that the error components scale as
\begin{align}
  |\text{truncation error}|
  & \leq
    \frac{1}{2 \pi} \left| \sum_{n=\pqbx+1}^\infty
     \left[ \frac{1}{n!} \int_{\Gamma} \mu(y)
      \left(\frac{d^n}{dc^n} \log(y - c) \right) \, ds(y) \right]
    {(x-c)}^n
    \right| \nonumber \\
  & \le C_1(\pqbx,\Gamma) \| \mu \|_{C^p(\Gamma)} r^{\pqbx+1} \log
  {\frac{1}{r}}, \label{eq:qbx-truncation-estimate} \\
  |\text{quadrature error}|
  & \leq
  \frac{1}{2 \pi} \left| \sum_{n=0}^\pqbx
  \left[ \frac{1}{n!}
    \left(\int_{\Gamma} - Q_N\left\{\int_{\Gamma}\right\}\right)
    \mu(y)
    \left(\frac{d^n}{dc^n} \log(y - c) \right) ds(y) \right]
       {(x-c)}^n
       \right| \nonumber \\
  & \le C_2(\pquad,\pqbx,\Gamma) {\left(\frac{h}{4r}\right)}^{2\pquad} \| \mu
       \|_{C^{2\pquad}(\Gamma)}. \label{eq:qbx-quadrature-estimate}
\end{align}
If we choose $r=\alpha h$ with a proportionality factor $\alpha$
so that $h/(4r)<1$, we obtain a scheme that is accurate of order $\pqbx+1$
in the mesh spacing $h$ up to controlled precision
${1/(4\alpha)}^{2\pquad}$.  See~\cite{epstein:2013:qbx-error-est} for more
details about the error estimates for QBX\@.
This choice reveals $\pquad$ as a free parameter that governs the
controlled precision term ${1/(4\alpha)}^{2\pquad}$, where it is mostly the
requirement of resolving the high derivatives of the kernel occurring
in~\eqref{eq:qbx} that governs the magnitude of $\pquad$.
\subsection{Ensuring Accuracy}%
\label{sec:qbx-fast-alg}
The assumptions required by QBX convergence theory will not
necessarily be met by input geometries supplied by a user. In
addition, quadrature resolution and placement of centers need
to be carefully controlled so as to retain bounds on quadrature
and truncation error.

An efficient algorithm that accomplishes this is the main contribution
of~\cite{rachh:2017:qbx-fmm}. We briefly and informally review this
procedure.
\subsubsection{Preprocessing the Geometry}%
\label{sec:qbx-geometry-preprocessing}
To ensure the accuracy of QBX (independently of any fast
algorithm), we check for the following situations which may result
in inaccuracy:
\begin{itemize}
\item \emph{When a QBX disk intersects the source curve at more points than the
  target point}. This comes from the analytical requirement that the QBX
  expansion disk must not be obstructed by any piece of the source curve.

\item \emph{When resolution and separation of source geometry from a
  QBX expansion center do not guarantee accurate coefficient
  computation}. This issue occurs in the presence of varying panel sizes.
  Depending on the quadrature rule and the panel size, the
  quadrature
  contribution from the singular integrand may not be resolved adequately if the
  source geometry is too close to the QBX center. This happens when a large
  panel is close to a small panel: quadrature from the large panel may not
  adequately resolve the integrand when evaluated at a QBX center near the small
  panel.
\end{itemize}

Because of the assumption of smooth, non-self-intersecting source
geometry $\Gamma$, both of these sources of error can be controlled by
\emph{iterative refinement}, such as by repeated bisection of panels.
In the first case, bisecting the source panel of the `disturbed' center,
by the proportionality $h=\alpha r$,  draws the expansion center closer to the panel and
hence, if applied often enough, the resulting expansion disk will eventually avoid
the conflicting geometry. In the second case, the offending source
panels can be iteratively refined to grow the region in which accurate
coefficient quadrature is achieved to include the target center.
Both of these checks can be efficiently implemented using a mechanism
termed \emph{area queries} in~\cite{rachh:2017:qbx-fmm}.

Once the geometry has been processed to ensure that there are no obstructions to
accurate quadrature and control of truncation error, it remains to choose a
quadrature node count.  The resulting $\pquad$ typically exceeds what might be
required to resolve the density and the geometry by some factor. Especially when
solving integral equations, it is thus natural to maintain density and geometry
at a suitable resolution and `upsample' them to $\pquad$ nodes for QBX
computation. $\pquad$ can be empirically estimated, as in Table~1
of~\cite{rachh:2017:qbx-fmm}, or adaptively determined based on analytic
knowledge~\cite{afklinteberg:2016:quadrature-est}.

\subsubsection{Placing Centers and Identifying QBX Targets}%
\label{sec:qbx-center-placement}

Evaluation of the potential at a target point requires special
treatment (e.g.\ by QBX) when it is so close to a source panel that the
underlying (here, Gaussian)
quadrature cannot resolve the integrand for the target. Any such
targets need to be identified and associated with a QBX center whose expansion disk
contains it. Like the geometry processing tasks of Section~\ref{sec:qbx-geometry-preprocessing},
both \emph{identifying} a target that is too close to the source and
\emph{finding} a QBX center for the target can be accomplished
efficiently using area queries as described in~\cite{rachh:2017:qbx-fmm}.

On the question of center placement, the most straightforward approach (and the
one used here) is to place QBX centers at $t \pm
\frac{h_k}{2}\hat n(t)$, where $t\in\Gamma$ is a target point, $h_k$ is the length of the panel $\Gamma_k$
\[
  h_k:=\frac 12 \int_{-1}^1 |\gamma_k'(t)| dt
\]
containing $t$, and $\hat n_t$ is the unit normal at $t$. This ensures that the QBX disks cover
most of the area near the (smooth) source curve. There may be gaps in coverage
where a target point needing close evaluation does not fall inside a QBX disk;
this occurs during volume evaluation of the layer potential for target points
\begin{tikzpicture}[baseline={(0,0)},scale=0.8]
  \draw [draw=none,use as bounding box] (-1.6,0) rectangle (1.6,0.1);
  \draw (-0.7,1.5) ++(35-90:1.5) arc (35-90:-35-90:1.5);
  \draw (0.7,1.5) ++(35-90:1.5) arc (35-90:-35-90:1.5);
  \draw (-1.5,0) -- (1.5,0);
  \fill (0, 0.07) circle (1pt);
\end{tikzpicture}
very close to the surface. We currently treat such gaps
by refining the source geometry until all required target points
are covered, at considerable computational expense.
Empirically, we have observed that simply associating targets with QBX
centers even if they fall outside their closest QBX center's expansion
disk by a given factor, possibly by up to 20\%, leads to little or no
observable loss in accuracy, though such use is not covered by
theoretical guarantees. Improvements on either strategy are the
subject of future investigation.

\subsection{Evaluating the Potential with an FMM}%
\label{sec:qbx-fmm}

If QBX is implemented following (\ref{eq:qbx}) directly, then
a quadratically-scaling computational cost $O(NM)$ is incurred by
evaluating the contribution of the $N$ source points at each of the $M$ targets.
Making use of the point-discrete form of the quadrature-discretized
sources and interpreting the summation in
(\ref{eq:qbx}) as the evaluation of a local expansion of a potential due to $N$ source charges
provides an avenue by which QBX can be accelerated to an $O(N + M)$ scheme by ways
of a variant of the Fast Multipole Method (FMM).

To achieve this, our work follows the strategy used
in~\cite{rachh:2017:qbx-fmm} which treats a QBX center as a special kind of FMM
target at which the FMM \emph{forms
a local expansion} instead of \emph{evaluating} the potential.
These expansion center targets participate in the FMM algorithm in much the same way
point-shape targets do; in particular, they are each `owned' by a box in the FMM
tree.
In principle, the only additional capability required of a Taylor global QBX FMM
is the accurate evaluation of local expansion coefficients, i.e.\ higher-order partial
derivatives of the potential. The ability to compute
one or two derivatives of the potential is a common feature in production FMM codes.
Such numerical differentiation is commonly associated
with some loss of accuracy.
Since the coefficient order (and hence the number of derivatives) in QBX can be
substantial, there is the possibility of substantial loss of accuracy.
This notion is empirically confirmed in~\cite{rachh:2017:qbx-fmm}, and
an empirically-determined increase in the FMM order is suggested as
a remedy. They key contribution of this article is to (a) introduce a
modified algorithm that does not require an artificial order increase
and (b) provide convergence theory that gives concrete error bounds for
layer potential approximated in this manner.

Recall that, in an adaptive point-evaluation
FMM~\cite{carrier:1988:adaptive-fmm}, the potential at a target point
is computed from three parts (each of which is often the sum
of further contributions), summarized in the first two columns of
Table~\ref{tab:pot-contributions}.
The conventional QBX FMM~\cite{rachh:2017:qbx-fmm} replaces each of these
with a translation to a local expansion as appropriate, summarized in
the last column of Table~\ref{tab:pot-contributions}.

\begin{table}[t]
\centering
\begin{tabular}{lcc}
  \toprule
  Interaction [List] & Point FMM~\cite{carrier:1988:adaptive-fmm} & Conv.\ QBX FMM~\cite{rachh:2017:qbx-fmm}\\
  \midrule
  Near neighbor boxes [1/U] & point $\to$ point eval. & point $\to$ local transl.\\
  Sep.\ smaller mpoles [3/W] & mpole $\to$ point eval. & mpole $\to$ local transl.\\
  Far field & local $\to$ point eval. & local $\to$ local transl.\\
  \bottomrule
\end{tabular}
\belowtableskip%
\caption{%
  Contributions to the potential in a point-evaluation FMM and
  the conventional QBX FMM of~\cite{rachh:2017:qbx-fmm}. `Mpole'
  and `local' refer to multipole and local expansions respectively,
  and `eval.' and `transl.' refers to expansion evaluation and
  translation respectively.
}%
\label{tab:pot-contributions}
\end{table}

\subsection{Accuracy of Using Translated Local Expansions for QBX}%
\label{sec:qbx-experimental-accuracy}

\begin{figure}[ht]
  \begin{minipage}[t]{.48\linewidth}
    \begin{accfigadjust}
      \input{list2.pgf}%
    \end{accfigadjust}
    \caption{%
      Accuracy  of obtaining a QBX expansion by multipole-to-local translation
      (vs.\ direct computation) for an interaction that  may be encountered in
      `List 2' of an FMM\@.
    }%
    \label{fig:list2-sfw-accuracy}
  \end{minipage}
  \hfill
  \begin{minipage}[t]{.48\linewidth}
    \begin{accfigadjust}
      \input{list3.pgf}%
    \end{accfigadjust}
    \caption{%
      Accuracy  of obtaining a QBX expansion by multipole-to-target translation
      (vs.\ direct computation) for an interaction that  may be encountered in
      `List 3' of an FMM\@.
    }%
    \label{fig:list3-sfw-accuracy}
  \end{minipage}
\end{figure}

By \emph{accuracy}, here and in Section~\ref{sec:error-estimates}, we mean the
fast algorithm's accuracy in approximating the terms of~\eqref{eq:qbx}.  This
notion is distinct from (though closely related to) the accuracy of the
underlying point FMM\@. The main difference is that the point FMM approximates a
potential but the QBX FMM approximates the local expansion of a potential.

A key detail not explicitly considered in the modifications of
Table~\ref{tab:pot-contributions} is that QBX expansion disks,
unlike target points, have an extent. Since no geometric constraints
are imposed, some expansion disks will almost inevitably cross box boundaries.
Using the above notion of accuracy, it is easy to imagine that this
might have an adverse influence on the accuracy of the computed QBX
expansion, owing to either reduced separation from source
boxes or larger separation of evaluation points from expansion centers
than allowed by FMM separation criteria.

To frame the discussion and give the reader an intuitive sense of this
issue, this section presents numerical examples of interactions that
\emph{may plausibly occur} in the conventional QBX FMM which lead to large losses
in accuracy. We also give an intuitive idea of how our method avoids these errors.
We defer a precise statement of the algorithm and a proof of its
accuracy to Sections~\ref{sec:error-estimates}
and~\ref{sec:algorithm}. We consider a number of different types
of interactions occurring in an FMM, and we demonstrate the
possibility of inaccuracy in each.  We consider the potential originating
from a single source of unit strength and a single
expansion center separated by a reference distance.  All expansions
have order 8 unless stated otherwise.

Figure~\ref{fig:list2-sfw-accuracy} portrays an interaction from a
point source (blue, right) to a QBX expansion center (orange, middle). A black circle
indicates the size of the QBX expansion disk about the center. In a typical
usage scenario of QBX, the source may contribute to an expansion of the layer
potential about the center, which is then evaluated back at the source, for,
say, the computation of the one-sided limit of the layer potential at the source. We now
consider an evaluation scenario in which FMM acceleration mediates this
interaction through a multipole (shown as `$m_8$') and a local expansion (shown as
`$\ell_8$'). While we have chosen this placement to be `adversarial' (i.e.\ to lead to
a large loss in accuracy), the scenario is permissible under the
rules of the conventional QBX FMM\@, since all required conditions are met:
The source point lies inside the box for which the multipole expansion is
formed, the source and target box are `well-separated', and the target QBX
expansion center lies inside of the target box.

A sufficient criterion to assure that
the accelerated and unaccelerated version of the scheme yield the same potential is
that the QBX local expansions computed directly and by ways of
intermediate expansions evaluate to the same potential, up to FMM accuracy.
The false-color plot of Figure~\ref{fig:list2-sfw-accuracy} shows the magnitude
of the difference between those two expansions in the scenario.
In the point FMM,
a coarse estimate of
multipole-to-local (`M2L' for short) accuracy for eighth-order expansions
evaluated within the target box gives
\begin{equation}
{\left(
\frac{\text{dist}(\text{box center}, \text{furthest target})}
     {\text{dist}(\text{box center}, \text{closest source})}
\right)}^{p+1}
\leq
{\left( \frac{\sqrt{2}}{4-\sqrt{2}} \right)^{9}\approx 4.4 \cdot 10^{-3}.}
\label{eq:rough-m2l-error}
\end{equation}
We have not yet demonstrated the applicability of such an estimate to the QBX
case (cf. Lemma~\ref{lem:m2l2qbxl}), but the data show that, for evaluation when
the source point is also the target, the expansion computed through intervening
multipole and local expansions misses this accuracy goal by a noticeable margin.

Analogously inaccurate approximation of QBX interactions for evaluation back at
the source may occur not just in multipole-to-local, but also in other types of
interactions in the FMM\@.  In Figure~\ref{fig:list3-sfw-accuracy}, the
false-color plot again shows the magnitude of the difference between the
potential obtained from evaluating the QBX expansion computed directly from the
source and the QBX expansion obtained indirectly by ways of intermediate
expansions, this time from a multipole expansion associated with a small box
containing the source point directly to QBX expansion center `target' within a
larger target box. Such an interaction may occur through \emph{List 3} in the
conventional QBX FMM\@.  Figure~\ref{fig:list4-sfw-accuracy} similarly shows
a source-to-local interaction of the type one might encounter in a \emph{List 4}
of the conventional QBX FMM\@.

\begin{figure}[ht]
  \begin{minipage}[t]{.48\linewidth}
    \begin{accfigadjust}
      \input{list2-less-bump.pgf}%
    \end{accfigadjust}
    \caption{%
      Accuracy  of obtaining a QBX expansion by multipole-to-local translation
      (vs.\ direct computation) for an interaction that  may be encountered in
      `List 2' of an FMM\@. In this experiment, the QBX order is lower than
      the order of the intermediate multipole and local (FMM) expansions.
    }%
    \label{fig:list2-less-bumped}
  \end{minipage}
  \hfill
  \begin{minipage}[t]{.48\linewidth}
    \begin{accfigadjust}
      \input{list2-bump.pgf}%
    \end{accfigadjust}
    \caption{%
      Accuracy  of obtaining a QBX expansion by multipole-to-local translation
      (vs.\ direct computation) for an interaction that  may be encountered in
      `List 2' of an FMM\@. In this experiment, the QBX order is lower than
      the order of the intermediate (FMM) multipole and local  expansions.
      Compared with Figure~\ref{fig:list2-less-bumped}, this experiment explores the
      effect of increasing the order of the intermediate (FMM) multipole and
      local expansions.
    }%
    \label{fig:list2-bumped}
  \end{minipage}
\end{figure}

The experiments described so far still paint an incomplete picture of the
translation process involved in accelerating QBX\@. For a more complete
understanding, consider that the FMM
order affects the accuracy of the potential by ways of the multipole-to-local
error, in a form like~\eqref{eq:rough-m2l-error}, whereas the QBX order controls
$h$-convergence as in~\eqref{eq:qbx-truncation-estimate} up to controlled precision,
as in~\eqref{eq:qbx-quadrature-estimate}.
As a result, the QBX order is typically lower than the FMM order, a fact that is
not reflected in our experiments thus far. Figure~\ref{fig:list2-less-bumped}
shows the result of a first experiment that takes this into consideration.
Denote the lower-order QBX expansion obtained directly
from the source, here of order $q$, by $\ell_{q,\text{direct}}$.
Further, denote the local expansion of order $q$ centered at the same location,
obtained through a multipole-to-local chain of order $p$ as pictured by $\ell_{q,\text{M2L}(p)}$.
Then the top and outer parts of Figure~\ref{fig:list2-less-bumped} show
$|\ell_{3,\text{direct}}-\ell_{8,M2L(8)}|$, while the bottom part shows
$|\ell_{3,\text{direct}}-\ell_{3,M2L(8)}|$.
A first observation from this experiment is that
$\ell_{3,M2L(8)}$ better approximates $\ell_{3,\text{direct}}$
than $\ell_{8,M2L(8)}$. While atypical from the point of view of conventional M2L error
estimation theory (where high order entails higher accuracy), this is also not
entirely surprising, as the translation chain is bound to approximate
lower-order coefficients more accurately than higher-order ones. In other words,
simply truncating $\ell_{8,M2L(8)}$ leads to higher accuracy. While this
argument is intuitively immediately appealing, we are not aware of any estimates
that would aid in quantifying the effect. Next, we observed in our earlier
experiments that M2L-mediated expansions did not achieve `conventional' M2L
accuracy for QBX evaluation at the source point. Based on the results of
our latest experiment, we still cannot confidently assert that these tolerances
are being met here. We can however predict that, as long as the order of the final
M2L-mediated (`QBX') local expansion is being kept constant, increasing the
intervening M2L orders should improve the approximation of the individual coefficients
of $\ell_{3,\text{direct}}$. This prediction is borne out by
the experiment of
Figure~\ref{fig:list2-bumped} which analogously to Figure~\ref{fig:list2-less-bumped} compares
$|\ell_{3,\text{direct}}-\ell_{15,M2L(15)}|$ with
$|\ell_{3,\text{direct}}-\ell_{3,M2L(15)}|$.

\begin{figure}[ht]
  \begin{minipage}[t]{.48\linewidth}
    \begin{accfigadjust}
      \input{list4.pgf}%
    \end{accfigadjust}
    \caption{%
      Accuracy experiments for QBX-FMM coupling with interactions as found in an FMM\@.
      QBX FMM error for a point-to-local (`List 4') interaction.
    }%
    \label{fig:list4-sfw-accuracy}
  \end{minipage}
  \hfill
  \begin{minipage}[t]{.48\linewidth}
    \begin{accfigadjust}
      \input{list2-zero-stickout.pgf}%
    \end{accfigadjust}
    \caption{%
      Accuracy experiments for QBX-FMM coupling with interactions as found in an FMM\@.
      QBX FMM error for a List 2
      interaction, with the expansion confined to a small region extending beyond the
      box containing the center.%
    }%
    \label{fig:list2-stickout}
  \end{minipage}
\end{figure}

This is the basic mechanism by which the conventional QBX FMM of~\cite{rachh:2017:qbx-fmm}
achieves accuracy. Looking ahead to the results obtained, Table~\ref{tab:starfish-accuracy-old} summarizes
the accuracy achieved in a verification of Green's formula
$\mathcal S(\partial_n u)-\mathcal D(u)=u/2$ for a harmonic $u$ across a range of FMM orders $\pfmm$
and $\pqbx$ for a reasonably simple test geometry by that scheme. We find these results
unsatisfactory for two reasons: First, the evidence supporting the attained
accuracy, while surprisingly robust across geometries in practice, is empirical.
Second, considering the results shown in Table~\ref{tab:starfish-accuracy-old}
for high QBX orders $\pqbx$ (and thus high relative accuracies),
the required FMM order quickly becomes very large if high accuracy is
desired. In the remainder of this contribution, we pursue a different
strategy that addresses both of these issues.

\subsection{Improved Accuracy Through a Geometric Criterion}%
\label{sec:fmm-alg-tweaks}

Perhaps the foremost problem with the above translation schemes in the context
of the conventional error estimates for multipole and local expansions is that
they permit---for QBX purposes---inaccurate near-field contributions mediated through
multipole and local expansions to enter the QBX local expansion.
As shown, increasing the order of those expansions can (empirically) mitigate
this circumstance. We prefer to rework the fast algorithm so as to prevent those
contributions in the first place. Roughly speaking, this requires separating
contributions in the `far field' \emph{of the QBX local expansion disk} (not just the box) from the inaccurate
`near-field' ones. As we will show, the far-field ones may be computed with
intervening translations without endangering accuracy. We have shown above that
intervening translations can considerably damage accuracy for the near-field
evaluation. It is useful to realize that the Fast Multipole algorithm
already contains a mechanism for handling this type of issue; its chief purpose,
after all, is to separate a far-field that is easily approximated from a
near-field that does not tolerate approximation. It is thus natural to seek to
broaden the FMM's notion of a near-field so as to respect the needs of QBX\@.
Wishing to avoid the scenarios that led to loss of accuracy above, we begin with
the coarse notion that we wish to avoid expansion-`accelerated' contributions to
QBX local expansions which would not meet the same accuracy target as the FMM
itself.

A first algorithmic variant that provides sufficiently strong accuracy
guarantees is nearly immediate: One may require that the entirety of a QBX expansion disk be
contained inside some FMM (potentially non-leaf-level) target box. From there, it is at least intuitively
plausible that the conventional FMM interaction patterns and their associated
error estimates might generalize to guarantee accurate multipole-/local-mediated far-field contributions
to QBX local expansions inside each box. This relatively simple generalization
of the FMM already represents a somewhat large algorithmic change: While in the
original FMM, target particles can only occur in leaf boxes, confining a center to its box
entails that \emph{non-leaf} boxes may also contain QBX expansion disk targets.
We call this restriction a \emph{target confinement rule}, named this way because the
target QBX disks are confined to the inside of a box.
This modification achieves the desired accuracy (rigorously, as we will
show in Section~\ref{sec:error-estimates}). Unfortunately, it is unsuitable in practice
because it no longer has linear complexity---neither in theory nor in practice.
In fact, the restriction  may lead QBX expansion disk targets
that overlap the boundaries of boxes near the root of the tree to exist at
near-root levels. Such QBX expansion disks, of which there could be a large
number, interact with nearly the entire geometry without the benefit of
multipole acceleration.

A second algorithmic variant that remedies this is again virtually immediate:
Let QBX expansion disks with a center inside a target box not be confined to the
strict extent of their containing box, but instead allow them to extend beyond
it by a constant factor of the box size, called the \emph{target
confinement factor} (`\emph{TCF}').  Intuitively, this ensures that each
expansion disk may propagate down the tree (away from the root) until it reaches
a box whose size is commensurate with the disk's own diameter. It is perhaps
plausible that such a scheme might no longer be subject to superlinear
complexity. However, the price for the lower cost is that obtaining guaranteed
accuracy requires a more complicated algorithm than in the previous
case, which we may describe as having a target confinement factor of zero.

Figure~\ref{fig:list2-stickout} provides a graphical representation. The
larger target confinement region is shown with a dashed line. It extends
beyond the boundaries of the FMM box, which are drawn using a solid line.  The
figure also shows a computational experiment analogous to the one of
Figure~\ref{fig:list2-sfw-accuracy} demonstrating that, at least in this situation, mediated
expansions accurately approximate the directly-obtained QBX expansion if they
are contained in the target confinement region.

As we will see, modifying the FMM algorithm to retain its benign characteristics
in terms of accuracy and cost under this modification presents a considerable set of challenges. At the
heart of this modification process is the choice of the target confinement
factor, which represents the main control point for the cost-accuracy trade-off
inherent in our algorithm.  To illustrate: a larger TCF may
result in worse convergence factors for nearly all FMM interactions, while
yielding smaller cost by allowing QBX expansion disks to settle closer to
the leaves of the tree.
To obtain good convergence factors in two and three
dimensions, we have chosen to modify the basic notion of `well-separated-ness'
inherent in the FMM, from, roughly speaking, `1-away' to `2-away',  similar to
the three-dimensional FMM of~\cite{greengard:1988:thesis}.
Similarly, we had to considerably rework
the criteria for interaction lists of well-separated smaller and bigger boxes
(`List 3' and `List 4').

The purpose of the remainder of this paper is to make rigorous the heuristic
arguments of the previous paragraphs. In Section~\ref{sec:error-estimates}, we
present a novel, more versatile version of the expansion translation error
estimates of~\cite{greengard:1987:fmm} that allow us to estimate the accuracy
achieved by a chain of translation operators in the presence of varying
expansion orders. In Section~\ref{sec:algorithm}, we precisely state our
algorithm and provide a complexity analysis that provides a set of benign
conditions under which linear complexity is retained. We further point out how
the analysis of Section~\ref{sec:error-estimates} can be used to understand the
accuracy of the full algorithm. We close with a comprehensive set of accuracy
and complexity experiments in Section~\ref{sec:results}.



\section{Analysis on Translation Operators with Varying Orders}%
\label{sec:error-estimates}


To arrive at a better, more quantitative understanding of the accuracy of the
translation chains examined in the previous section, we prove new
results that explore the accuracy behavior of FMM translation operators
when the expansion order varies throughout the chain. We are particularly
interested in the effect of truncation of an `upstream' expansion on the
accuracy of `downstream' expansion coefficients. The lemmas proven below, while
useful in our analysis of the \algbrand~FMM, are entirely independent of our
particular usage scenario and may prove useful in other settings.  For
simplicity and conciseness, we prove these results in the setting of the Laplace
equation in two dimensions.

\subsection{Analytical Preliminaries}

First, we recall standard facts regarding multipole and local expansions. For
proofs of these facts, we refer the reader to~\cite{greengard:1987:fmm}.

The \emph{multipole expansion} centered at
the origin due to a unit strength source $s_1$ at $z_0$ takes the form
\begin{equation}
\label{eqn:multipole}
\phi(z) = \log(z - z_0) = a_0 \log(z) + \sum_{k=1}^\infty \frac{a_k}{z^k}
\end{equation}
with $a_0 = 1$ and $a_k = -z_0^k / k$ for $k > 0$. This series converges for
$|z| > |z_0|$, where $R = |z_0|$ is called the radius of the multipole
expansion. The multipole expansion (\ref{eqn:multipole}) can also be truncated
to $(p + 1)$ terms, which we term a \emph{$p$-th order expansion}.

Two important operations on the (truncated or non-truncated) expansion
(\ref{eqn:multipole}) are (1) shifting the center of the expansion and (2)
conversion to a local expansion. The center of the multipole expansion
(\ref{eqn:multipole}) may be shifted to a new center $y$, obtaining another
multipole expansion, with coefficients ${(\alpha_k)}_{k=0}^\infty$, given by
\begin{equation}
  \label{eqn:m2m}
    \alpha_m =
    \begin{cases}
      a_0 & m = 0, \\
      %
      - \frac{a_0 (-y)^m}{m} +
      \sum_{k=1}^m \binom{m - 1}{k - 1} a_k (-y)^{m-k}
      & m > 0.
    \end{cases}
\end{equation}
The resulting expansion converges for $|z| > R + |y|$.  The multipole
expansion~\eqref{eqn:multipole} can also be converted to a local (Taylor)
expansion, centered at $y$ for $|y| > R$, with coefficients
$(\beta_k)_{k=0}^\infty$ given by
\begin{equation}
  \label{eqn:m2l}
    \beta_m =
    \begin{cases}
      %
      a_0 \log(y) + \sum_{k=1}^\infty \frac{a_k}{y^k} & m = 0, \\
      (-1)^m \left(
      \frac{a_0}{m y^m}
      +
      \sum_{k=1}^\infty \binom{m+k-1}{k-1} \frac{a_k}{y^{m+k}}
      \right) & m > 0.
    \end{cases}
\end{equation}
The series $\sum_{k=0}^\infty \beta_k(z - y)^k$ converges when $|z - y| < R -
|y|$.

The local expansion centered at the origin of the potential
due to a unit strength source at $z_0$ is the Taylor expansion
\begin{equation}
  \label{eqn:local}
  \phi(z) = \log(z - z_0) = \sum_{k=0}^\infty b_k z^k.
\end{equation}
This converges for $|z| < |z_0|$. The coefficients ${(b_k)}_{k=0}^\infty$ are
given by $ b_0 = \log(-z_0)$ and $b_m = -1/(mz_0^m)$ for $m > 0$. The main
operation on expansions of the type~\eqref{eqn:local} of importance to this
discussion is shifting the center of the local expansion. The center of a $p$-th
order local expansion of the form~\eqref{eqn:local} may be shifted to another
center $y$, with new coefficients $(\beta_m)_{m=1}^p$, given by
\begin{equation}
  \label{eqn:l2l}
  \beta_m = \sum_{k=m}^p \binom{k}{m} b_k y^{k-m}.
\end{equation}

\subsection{Error Estimates for Chained Translations}

\colorlet{zlocalcolor}{blue}
\colorlet{Gammalocalcolor}{blue!50!red}
\colorlet{z0multipolecolor}{red}

\begin{figure}
  \begin{minipage}[t]{0.48\textwidth}
  \centering
  \begin{tikzpicture}[scale=0.8]
    \tikzset{%
      disk/.style={draw, circle, thick, inner sep=0},
      shorterarrow/.style={shorten <=1pt, shorten >=1pt}
    }

    \node [disk, minimum size=3cm](Gamma) at (0,0) {};
    \node [below left] at (Gamma.south west) {$\overline B(0,r)$};
    \draw [fill] (Gamma) circle (1pt) node [below right] {$0$};
    \draw [<->, shorterarrow] (Gamma.center) -- (Gamma.west)
        node [below, midway, xshift=2pt] {$r$};

    \path (Gamma) ++ (-30:1.25cm) node [disk, color=zlocalcolor, minimum size=1cm](lexp) {};
    \path (lexp) ++ (20:0.4cm) node (y) {};
    \draw [fill] (lexp) circle (1pt) node [below] {$z$};
    \draw [fill] (y) circle (1pt) node [below] {$y$};
    \draw [shorterarrow] (-25:3cm)
        node [color=zlocalcolor] {\textrm{local}}
        edge[color=zlocalcolor,out=135,in=45,->] (lexp.north east);

    \path (Gamma) ++ (135:4cm) node [disk, color=z0multipolecolor, minimum size=1.5cm](gamma) {};
    \node [below left] at (gamma.south) {$\overline B(z_0, \lambda r)$};
    \path (gamma) ++(25:0.75cm) coordinate (source);
    \draw [fill] (gamma) circle (1pt) node [below] {$z_0$};
    \draw [fill] (source) circle (1pt) node [below] {src.};
    \draw [<->, shorterarrow] (gamma) -- (Gamma) node [right, midway] {$cr$};
    \draw [<->, shorterarrow] (gamma.center) -- (gamma.north west)
        node [right, midway, xshift=2pt] {$\lambda r$};
    \draw [shorterarrow] (90:3.25cm)
        node [color=z0multipolecolor] {\textrm{multipole}}
        edge[color=z0multipolecolor,out=-135,in=-35,->] (gamma.east);
  \end{tikzpicture}

  \caption{%
    \label{fig:m2qbxl}%
    Obtaining the local expansion of a point potential using an intermediate
    multipole expansion. The local expansion of the potential due to the source
    charge is formed by first forming a multipole expansion inside
    $\overline B(z_0,\lambda r)$ and
    then shifting to $z$.  This provides the setting for Lemma~\ref{lem:m2qbxl}.
  }
  \end{minipage}
  \hfill
  \begin{minipage}[t]{0.48\textwidth}
  \centering
  \begin{tikzpicture}[scale=0.8]
    \tikzset{%
      disk/.style={draw, circle, thick, inner sep=0},
      shorterarrow/.style={shorten <=1pt, shorten >=1pt}
    }

    \node [disk, color=Gammalocalcolor, minimum size=3cm](Gamma) at (0,0) {};
    \node [below left] at (Gamma.south west) {$\overline B(0,r)$};
    \draw [fill] (Gamma) circle (1pt) node [below right] {$0$};
    \draw [<->, shorterarrow] (Gamma.center) -- (Gamma.west)
        node [below, midway, xshift=2pt] {$r$};
    \draw [shorterarrow] (20:3cm)
        node [color=Gammalocalcolor] {\textrm{local}} edge[color=Gammalocalcolor,out=90,in=45,->]
        (Gamma.north east);

    \path (Gamma) ++ (-30:1.25cm) node [disk, color=zlocalcolor, minimum size=1cm](lexp) {};
    \path (lexp) ++ (20:0.4cm) node (y) {};
    \draw [fill] (lexp) circle (1pt) node [below] {$z$};
    \draw [fill] (y) circle (1pt) node [below] {$y$};
    \draw [shorterarrow] (-25:3.5cm)
        node [color=zlocalcolor] {\textrm{shifted local}}
        edge[color=zlocalcolor,out=135,in=45,->] (lexp.north east);

    \path (Gamma) ++ (135:4cm) node [draw=none,fill=none](gamma) {};
    \draw [fill] (gamma) circle (1pt) node [right] {$z_0$: source};
    \draw [<->, shorterarrow] (gamma) -- (Gamma) node [right, midway] {$cr$};
  \end{tikzpicture}

  \caption{%
    \label{fig:l2qbxl}%
    Obtaining the local expansion of a point potential using an intermediate
    local expansion.  The local expansion of the potential due to the source
    charge is formed inside the disk $\overline B(0,r)$ and then shifted to the
    center $z$.  This provides the setting for Lemma~\ref{lem:l2qbxl}.  }
  \end{minipage}
\end{figure}

Given our intended usage pattern in accelerated QBX, we are interested in the
following types of translation chains:
\begin{enumerate}
  \item Source $\to$ Multipole$(p)$ $\to$ Local$(q)$ (Lemma~\ref{lem:m2qbxl})
  \item Source $\to$ Local$(p)$ $\to$ Local$(q)$ (Lemma~\ref{lem:l2qbxl})
  \item Source $\to$ Multipole$(p)$ $\to$ Local$(p)$ $\to$ Local$(q)$
   (Lemma~\ref{lem:m2l2qbxl})
\end{enumerate}
The main distinction among these we encounter is whether whether the interaction is
mediated through an intermediate multipole or local
expansion, or both. The list above shows, abstractly, the order of each
expansion through the values $p$ and $q$. In our envisioned usage scenario, $q$
represents the order of the final QBX local expansion and will generically be
lower than $p$.  The reader familiar with conventional adaptive FMMs
(e.g.~\cite{carrier:1988:adaptive-fmm}) may discover a direct correspondence of
these types of translation chains and the various interaction lists used in
those algorithms.

Without loss of generality, we may assume that an interaction goes through at
most a single intermediate multipole expansion and intermediate local expansion,
occupying a single level of the FMM's hierarchy. This is due to the fact
that, absent additional truncation, the FMM `forgets' intermediate
translations in the following way: the value of a local expansion shifted
downward through a
sequence of local-to-local (\ref{eqn:l2l}) translations only depends on the
source and the \emph{initial} local expansion center. Similarly, the value of a
multipole expansion shifted upward through a sequence of multipole-to-multipole
(\ref{eqn:m2m}) translations only depends on the source and the \emph{final}
multipole expansion center. (See~\cite[Lemma 2.3 and Lemma 2.5]{greengard:1987:fmm}.)

We recall a technique from complex analysis for bounding the $n$-th derivative
of a complex analytic function. The proof can be found in~\cite[IV.2.14 on page~73]{conway:1978:complex-variables}.
\begin{proposition}%
\label{prop:cauchy-derivative-bound}%
Let $U \subseteq \mathbb{C}$ be open and let $\phi : U \to \mathbb{C}$ be a
complex analytic function. Let $z \in U, r > 0$ and suppose that $\overline B(z,
r) \subseteq U$. Then for all $n \geq 0$
\[
|\phi^{(n)}(z)| \leq \frac{n!}{r^n} \left( \max_{w \in {\overline B}(z, r)} |\phi(w)| \right).
\]
\end{proposition}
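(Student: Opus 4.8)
The plan is to derive this classical Cauchy estimate directly from Cauchy's integral formula for the derivatives of a holomorphic function, followed by the standard length-times-supremum bound on a contour integral.

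First I would observe that, since $U$ is open and $\overline B(z,r)\subseteq U$, the function $\phi$ is holomorphic on an open neighborhood of the closed disk $\overline B(z,r)$; in particular it is holomorphic on and inside the positively oriented circle $C_r=\{w : |w-z|=r\}$. Cauchy's integral formula for the $n$-th derivative then applies and yields
\[
  \phi^{(n)}(z)=\frac{n!}{2\pi i}\oint_{C_r}\frac{\phi(w)}{(w-z)^{n+1}}\,dw .
\]
Next I would parametrize $C_r$ by $w(\theta)=z+re^{i\theta}$ for $\theta\in[0,2\pi]$, so that $|w(\theta)-z|=r$ and $|dw|=r\,d\theta$. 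Writing $M:=\max_{w\in\overline B(z,r)}|\phi(w)|$, which is finite because $\phi$ is continuous on the compact set $\overline B(z,r)$, and taking absolute values under the integral sign, I obtain
\[
  |\phi^{(n)}(z)|
  \le\frac{n!}{2\pi}\int_0^{2\pi}\frac{|\phi(z+re^{i\theta})|}{r^{n+1}}\,r\,d\theta
  \le\frac{n!}{2\pi}\cdot\frac{M}{r^{n}}\cdot 2\pi
  =\frac{n!}{r^{n}}\,M ,
\]
which is exactly the asserted bound; the case $n=0$ is the special case coming from the ordinary Cauchy integral formula.

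The only step that is not a one-line computation is the justification of Cauchy's integral formula for the higher derivatives, i.e.\ that $\phi$ may be differentiated $n$ times under the integral sign of the ordinary Cauchy formula over $C_r$. This is a standard fact of one-variable complex analysis --- it follows from the locally uniform convergence of the difference quotients of $w\mapsto (w-z)^{-1}$ on the compact contour $C_r$ --- and a complete proof is given in~\cite[IV.2.14 on page~73]{conway:1978:complex-variables}, which we take as known. With that in hand the proposition is immediate from the two displays above, so no genuine obstacle arises beyond invoking the cited fact.
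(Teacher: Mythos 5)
Your proof is correct and is the standard Cauchy-estimate argument; the paper itself does not prove this proposition but simply cites Conway's text for it, so your derivation via the Cauchy integral formula for the $n$-th derivative followed by the length-times-supremum bound is exactly the proof the paper is implicitly relying on.
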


\begin{remark}
Although Lemmas~\ref{lem:m2qbxl},~\ref{lem:l2qbxl}, and~\ref{lem:m2l2qbxl} are
stated for a single source charge of unit strength, the statements can be
straightforwardly generalized for
an ensemble of $m$ charges of strengths $q_1, \ldots, q_m$, with the error bound
scaled by $\sum_{k=1}^m |q_k|$.
\end{remark}



See Figure~\ref{fig:m2qbxl} for context on the following lemma.
\begin{lemma}[Truncating a mediating multipole to $p$-th order
on a $q$-th order local]%
\label{lem:m2qbxl}%
Let $\lambda, c, r > 0$.  Suppose that a single unit strength charge is placed
in the closed disk $\overline B(z_0, \lambda r)$ with radius $\lambda r$ and center $z_0$, such that
$|z_0| \geq (c + 1 + \lambda)r$.  The corresponding multipole expansion with
coefficients ${{(a_k)}}_{k=0}^\infty$ converges in the closed disk $\overline B(0,r)$ of
radius $r$ centered at the origin.

Suppose that $y, z \in \overline B(0,r)$. Then if $|z|<r$ and $|y - z| \leq r - |z|$, the
potential due to the charge is described by a power series
\[ \phi(y) = \sum_{k=0}^{\infty} \beta_k {(y-z)}^k. \]
Fix the intermediate multipole order $p \geq 0$. For $n \geq 0$, let
$\tilde{\beta}_n$ be the $n$-th coefficient of the local expansion centered at
$z$ obtained by translating the $p$-th order multipole expansion of $\phi$:
\[
\tilde{\beta}_n = \frac{1}{n!} \frac{d^n}{dz^n}
\left(a_0 \log{(z - z_0)} + \sum_{k=1}^p \frac{a_k}{{(z-z_0)}^k} \right).
\]
Define $\omega = 1/(1 + \frac{c}{\lambda})$.  Fix the local
expansion order $q \geq 0$. Then
\[
\left| \sum_{k=0}^q \beta_k{(y-z)}^k
- \sum_{k=0}^q \tilde{\beta}_k {(y-z)}^k \right|
\leq
\left( \frac{q+1}{p+1} \right)
\left( \frac{\omega^{p+1}}{1 - \omega} \right).
\]
\end{lemma}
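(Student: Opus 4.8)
The plan is to recognize the bracketed quantity as a truncated Taylor polynomial whose coefficients are exactly the Taylor coefficients at $z$ of the multipole \emph{truncation error}, and then to control those coefficients by the Cauchy derivative estimate of Proposition~\ref{prop:cauchy-derivative-bound}.

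First I would fix notation: let $w_0\in\overline B(z_0,\lambda r)$ be the location of the charge, so $\phi(\zeta)=\log(\zeta-w_0)$, and let
\[
\psi(\zeta)=a_0\log(\zeta-z_0)+\sum_{k=1}^{p}\frac{a_k}{(\zeta-z_0)^k},
\qquad a_0=1,\quad a_k=-\frac{(w_0-z_0)^k}{k},
\]
be the $p$-th order truncated multipole approximation (centered at $z_0$). Since $|z_0|\ge(c+1+\lambda)r>r$ and $|w_0|\ge|z_0|-\lambda r\ge(c+1)r>r$, the standard convergence property of multipole expansions (cf.~\cite{greengard:1987:fmm}) shows that $\phi$ equals its full multipole series on $\overline B(0,r)$, so the error $E:=\phi-\psi$ is the tail $E(\zeta)=\sum_{k=p+1}^{\infty}a_k/(\zeta-z_0)^k$ and is analytic on an open neighborhood of $\overline B(0,r)$. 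Because $\beta_k$ and $\tilde\beta_k$ are the $k$-th Taylor coefficients at $z$ of $\phi$ and of $\psi$ respectively, we have $\beta_k-\tilde\beta_k=E^{(k)}(z)/k!$, hence
\[
\sum_{k=0}^{q}\beta_k(y-z)^k-\sum_{k=0}^{q}\tilde\beta_k(y-z)^k
=\sum_{k=0}^{q}\frac{E^{(k)}(z)}{k!}\,(y-z)^k .
\]

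The crux is a uniform bound on $|E|$ over $\overline B(z,\rho)$ with $\rho:=r-|z|>0$. For $w$ in that disk, $|w|\le r$, so $|w-z_0|\ge|z_0|-|w|\ge(c+1+\lambda)r-r=(c+\lambda)r$, while $|w_0-z_0|\le\lambda r$; thus $|(w_0-z_0)/(w-z_0)|\le\lambda/(c+\lambda)=\omega<1$. Using $|a_k|=|w_0-z_0|^k/k$ and $1/k\le1/(p+1)$ for $k\ge p+1$, I would sum the geometric tail:
\[
|E(w)|\le\sum_{k=p+1}^{\infty}\frac{\omega^k}{k}\le\frac{1}{p+1}\sum_{k=p+1}^{\infty}\omega^k=\frac{1}{p+1}\cdot\frac{\omega^{p+1}}{1-\omega}.
\]
Since $\overline B(z,\rho)\subseteq\overline B(0,r)$ lies in the domain of analyticity of $E$, Proposition~\ref{prop:cauchy-derivative-bound} gives $|E^{(k)}(z)|\le(k!/\rho^k)\max_{\overline B(z,\rho)}|E|$, so $|\beta_k-\tilde\beta_k|\le\rho^{-k}\cdot\frac{1}{p+1}\cdot\frac{\omega^{p+1}}{1-\omega}$.

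Finally I would assemble the pieces using the hypothesis $|y-z|\le r-|z|=\rho$, which gives $|y-z|^k/\rho^k\le1$, so
\[
\left|\sum_{k=0}^{q}\beta_k(y-z)^k-\sum_{k=0}^{q}\tilde\beta_k(y-z)^k\right|
\le\sum_{k=0}^{q}\frac{|y-z|^k}{\rho^k}\cdot\frac{1}{p+1}\cdot\frac{\omega^{p+1}}{1-\omega}
\le\left(\frac{q+1}{p+1}\right)\left(\frac{\omega^{p+1}}{1-\omega}\right),
\]
as claimed. I expect the only genuine care-points to be (i) justifying that $E$ is precisely the multipole tail and that its Taylor coefficients at $z$ are $\beta_k-\tilde\beta_k$ (a bookkeeping matter resting on the standard multipole convergence fact, together with the observation that $\overline B(0,r)$ contains neither $z_0$ nor $w_0$ so all branches of $\log$ and all the series involved are well defined and analytic there), and (ii) the chain of triangle inequalities producing the ratio bound $\omega$; everything past that is a geometric-series estimate followed by the Cauchy bound.
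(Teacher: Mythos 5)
Your proof is correct and follows essentially the same route as the paper's: you identify the difference of truncated local sums with the Taylor coefficients at $z$ of the multipole tail $R_p$ (your $E$), bound $\sup_{\overline B(z,r-|z|)}|R_p|$ by the geometric series $\sum_{k\ge p+1}\omega^k/k \le \omega^{p+1}/((p+1)(1-\omega))$ using $|a_k|\le(\lambda r)^k/k$ and the separation estimate $|w-z_0|\ge(c+\lambda)r$, apply Proposition~\ref{prop:cauchy-derivative-bound} to control $|\beta_k-\tilde\beta_k|$, and finish with the triangle inequality exploiting $|y-z|\le r-|z|$. The only cosmetic difference is that you name the actual charge location $w_0$ explicitly, which is a slight gain in clarity, but the underlying argument is identical.
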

\begin{proof}
We may write $\beta_n - \tilde{\beta}_n$ as
\[
\beta_n - \tilde{\beta}_n
=
\frac{1}{n!}
\frac{d^n}{dz^n}
R_p(z)
\]
where the function $R_p: (\mathbb{C} \setminus \overline B(z_0, \lambda r)) \to \mathbb{C}$, defined
as
\[
R_p(z) = \sum_{k=p+1}^{\infty} \frac{a_k}{{(z - z_0)}^k}
\]
is what remains after truncating the multipole expansion of $\phi$ to $p$-th
order.

We bound the $n$-th derivative of $R_p$ at $z$. $R_p$ is complex analytic and its
domain contains the closed disk $\{ w : |w-z| \leq r - |z| \}$, so by
Proposition~\ref{prop:cauchy-derivative-bound}
\begin{equation}
\label{eqn:deriv-cauchy-estimate}
|R_p^{(n)}(z)| \leq
\frac{n!}{{(r - |z|)}^n}
\left(\max_{y \in \overline B(z,r-|z|)} |R_p(y)| \right).
\end{equation}
Recall (e.g., from~\eqref{eqn:multipole}) that the multipole coefficients
${(a_k)}_{k=1}^\infty$ satisfy
\[ |a_k| \leq \frac{{(\lambda r)}^k}{k}, \quad k > 0. \]
Using Figure~\ref{fig:m2qbxl} and noting $z\ne z_0$,
when $|y - z| \le r - |z|$ and $k > 0$, we have
\[
\left| \frac{a_k}{{(y - z_0)}^k} \right|
\leq \frac{{(\lambda r)}^k}{k{(cr + \lambda r)}^k}
= \frac{\omega^k}{k}.
\]
Noting $\omega<1$, we find
\begin{equation}
\label{eqn:max-truncation-error-on-disk}
\max_{y \in \overline B(z,r-|z|)}
|R_p(y)|
\leq \sum_{k=p+1}^{\infty} \frac{\omega^k}{k}
\leq \frac{1}{p+1} \left( \frac{\omega^{p+1}}{1 - \omega} \right).
\end{equation}
Combining (\ref{eqn:deriv-cauchy-estimate}) and
(\ref{eqn:max-truncation-error-on-disk}) yields
\begin{equation}
\label{eqn:m2qbxl-coeff-estimate}
|\beta_n - \tilde{\beta}_n|
= \left| \frac{R_p(z)}{n!} \right|
\leq \frac{1}{p+1}
\left( \frac{1}{{(r-|z|)}^n} \right)
\left( \frac{\omega^{p+1}}{1-\omega} \right).
\end{equation}
From the triangle inequality and (\ref{eqn:m2qbxl-coeff-estimate}),
we obtain the claim:
\[
\left| \sum_{k=0}^q \beta_k{(y-z)}^k
- \sum_{k=0}^q \tilde{\beta}_k {(y-z)}^k \right|
\leq \sum_{k=0}^q
\left(
  \frac{1}{p+1}
  \cdot \frac{\omega^{p+1}}{1-\omega}
  \cdot \frac{1}{{(r-|z|)}^k}
\right) {(r-|z|)}^k
= \left(\frac{q+1}{p+1}\right)
\left( \frac{\omega^{p+1}}{1-\omega} \right).
\]
\end{proof}

See Figure~\ref{fig:l2qbxl} for context on the following lemma.
\begin{lemma}[Truncating a mediating local to $p$-th order
on a $q$-th order local]%
\label{lem:l2qbxl}
Let $c, r > 0$. Suppose that a single unit strength charge is placed at $z_0$,
with $|z_0| \geq (c + 1)r$. Consider the closed disk $\overline B(0,r)$ of radius $r$
centered at the origin. Suppose that $y, z \in \overline B(0,r)$.  If $|z| < r$ and $|y -
z| \leq r - |z|$, the potential $\phi$ due to the charge is described by a power
series
\[ \phi(y) = \sum_{l=0}^{\infty} \beta_l {(y-z)}^l. \]
Fix the intermediate local order $p \ge 0$.  For $n \geq 0$, let
$\tilde{\beta}_n$ be the $n$-th coefficient of a local expansion centered at $z$
obtained by translating a $p$-th order local expansion of $\phi$ centered at the
origin:
\[
\tilde{\beta}_n = \frac{1}{n!} \frac{d^n}{dz^n}
\left(\sum_{k=0}^{p} \frac{\phi^{(k)}(0)}{k!} z^k \right).
\]
Fix the local expansion order $q \geq 0$. Define $\alpha = 1/(1 + c)$. Then
\[
\left| \sum_{k=0}^q \beta_k{(y-z)}^k
- \sum_{k=0}^q \tilde{\beta}_k {(y-z)}^k \right|
\leq \left(\frac{q+1}{p+1}\right)
\left( \frac{\alpha^{p+1}}{1 - \alpha} \right).
\]
\end{lemma}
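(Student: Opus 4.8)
The plan is to transcribe the proof of Lemma~\ref{lem:m2qbxl} almost verbatim, replacing the truncated \emph{multipole} tail by the truncated \emph{Taylor} tail. The starting point is the observation that $\beta_n = \phi^{(n)}(z)/n!$ is nothing but the $n$-th Taylor coefficient of $\phi$ about $z$, while $\tilde\beta_n = \frac{1}{n!}\frac{d^n}{dz^n}T_p(z)$ with $T_p(z) = \sum_{k=0}^p \frac{\phi^{(k)}(0)}{k!}z^k$ the degree-$p$ Taylor polynomial of $\phi$ about the origin. Hence the per-coefficient discrepancy is an honest derivative of the Taylor remainder:
\[
\beta_n - \tilde\beta_n = \frac{1}{n!}\,R_p^{(n)}(z), \qquad R_p(z) := \phi(z) - T_p(z) = \sum_{k=p+1}^{\infty} b_k z^k,
\]
where $b_k$ are the coefficients of the origin-centered local expansion~\eqref{eqn:local} of $\phi$. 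This is the local-expansion counterpart of the ``the FMM forgets intermediate translations'' fact recalled in the excerpt (cf.\ Lemma~2.5 of~\cite{greengard:1987:fmm}): a local expansion shifted through a chain of local-to-local translations depends only on the source and the initial center, so comparing against the truncated-and-shifted expansion is the same as comparing against the truncated-then-differentiated Taylor tail.

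Next I would bound $|R_p^{(n)}(z)|$ with Proposition~\ref{prop:cauchy-derivative-bound}. Since $c>0$ forces $|z_0| \geq (c+1)r > r$, the series $\sum_k b_k z^k$, and therefore $R_p$, is analytic on the open disk $\{\,w : |w| < |z_0|\,\}$, which contains $\overline B(z,r-|z|) \subseteq \overline B(0,r)$. Proposition~\ref{prop:cauchy-derivative-bound} then gives
\[
|R_p^{(n)}(z)| \leq \frac{n!}{(r-|z|)^n}\left(\max_{w \in \overline B(z,r-|z|)} |R_p(w)|\right).
\]
To estimate the maximum, I use $|b_k| \leq \tfrac{1}{k|z_0|^k}$ for $k>0$ (from $b_k = -1/(kz_0^k)$) together with $|w| \leq r$ on $\overline B(z,r-|z|)$, so that $|b_k w^k| \leq \tfrac{1}{k}\left(\tfrac{r}{|z_0|}\right)^k \leq \tfrac{\alpha^k}{k}$ with $\alpha = 1/(1+c)$. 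Summing the geometric-type series and using $\alpha<1$ yields $\max_{w} |R_p(w)| \leq \sum_{k=p+1}^{\infty}\tfrac{\alpha^k}{k} \leq \tfrac{1}{p+1}\cdot\tfrac{\alpha^{p+1}}{1-\alpha}$, and combining the two displays produces the coefficient estimate $|\beta_n - \tilde\beta_n| \leq \tfrac{1}{(r-|z|)^n}\cdot\tfrac{1}{p+1}\cdot\tfrac{\alpha^{p+1}}{1-\alpha}$ (valid also for $n>p$, where $\tilde\beta_n=0$).

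To finish, I would apply the triangle inequality to $\sum_{k=0}^q(\beta_k - \tilde\beta_k)(y-z)^k$: because $|y-z| \leq r - |z|$, each summand $|\beta_k - \tilde\beta_k|\,|y-z|^k$ is bounded by $\tfrac{1}{p+1}\cdot\tfrac{\alpha^{p+1}}{1-\alpha}$, and summing over $k=0,\dots,q$ gives exactly $\left(\tfrac{q+1}{p+1}\right)\left(\tfrac{\alpha^{p+1}}{1-\alpha}\right)$, the claimed bound.

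I do not expect a real obstacle; the argument is a direct parallel of Lemma~\ref{lem:m2qbxl}. The only spot needing a moment's care is the domain check for the Cauchy derivative bound, i.e.\ verifying that $\overline B(z,r-|z|)$ lies strictly inside the disk of convergence of the origin-centered Taylor series so that $R_p$ is analytic there — this is precisely where the hypothesis $c>0$ (hence $|z_0|>r$) is used. Everything else is bookkeeping with the coefficient bounds $|b_k|\le 1/(k|z_0|^k)$ and a geometric series.
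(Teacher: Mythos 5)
Your proof is correct and follows essentially the same route as the paper's: identify $\beta_n - \tilde\beta_n = (1/n!)\,R_p^{(n)}(z)$ with $R_p$ the Taylor remainder $\sum_{k>p} b_k z^k = \sum_{k>p} -z^k/(kz_0^k)$, bound $|R_p|$ on $\overline B(z,r-|z|)$ via $|b_k w^k|\le \alpha^k/k$, apply Proposition~\ref{prop:cauchy-derivative-bound}, and close with the triangle inequality. The appeal to Greengard's Lemma~2.5 (``the FMM forgets intermediate translations'') is a harmless redundancy — the identity $\beta_n-\tilde\beta_n=(1/n!)R_p^{(n)}(z)$ already follows directly from $\beta_n=\phi^{(n)}(z)/n!$ and $\tilde\beta_n=T_p^{(n)}(z)/n!$.
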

\begin{proof}
This lemma may be proved with an argument almost identical to the proof of
Lemma~\ref{lem:m2qbxl}, so we only sketch the proof. We have
\(
\beta_n - \tilde{\beta}_n =
(1/n!)
\inlbfrac{d^n}{dz^n} R_p(z)
\)
where the complex analytic function $R_p: \overline B(0,r) \to \mathbb{C}$ given by
\( R_p(z) = \sum_{k=p+1}^\infty -z^k/(kz_0^k) \)
is the Taylor remainder of the Taylor series for $\phi$ (cf.~\eqref{eqn:local})
centered at $0$ and evaluated at $z$.

Noting $\alpha<1$, applying
Proposition~\ref{prop:cauchy-derivative-bound} to $R_p$ yields
\(
|\beta_n - \tilde{\beta}_n|
\leq 1/(p+1)
\left( 1/{(r-|z|)}^n \right)
\left( \alpha^{p+1}/(1-\alpha) \right).
\)
From this and the triangle inequality, the claim follows.
\end{proof}

Once again, see Figure~\ref{fig:m2qbxl} for context on the following lemma.
\begin{lemma}[Truncating mediating multipole and local
to $p$-th order on a $q$-th order local]%
\label{lem:m2l2qbxl}
Let $c$, $\lambda$, $r$, ${(a_k)}_{k=0}^\infty$, $\overline B(0,r)$, $\overline B(z_0, \lambda r)$, and $\phi$
be as in Lemma~\ref{lem:m2qbxl}.
Let $y,z \in \overline B(0,r)$. Then if $|z| < r$ and $|y - z| \leq r - |z|$, the
potential due to the charge is described by a power series
\[ \phi(y) = \sum_{k=0}^{\infty} \beta_k {(y-z)}^k. \]
Fix the intermediate multipole and local order $p \geq 0$. For $n \geq 0$, let
$\tilde{\zeta}_n$ be the $n$-th coefficient of the local expansion at the origin
of the potential arising from the $p$-th order multipole expansion at $z_0$
of $\phi$:
\[
\tilde{\zeta}_n = \frac{1}{n!}
\left. \frac{d^n}{dz^n} \right|_{z=0}
\left(a_0 \log{(z-z_0)} + \sum_{k=1}^p \frac{a_k}{{(z-z_0)}^k} \right).
\]
Also, let $\tilde{\beta}_n$ be the $n$-th coefficient of the local expansion
at $z$
of the potential arising from the $p$-th
order local expansion with coefficients ${(\tilde{\zeta}_k)}_{k=0}^p$:
\[
\tilde{\beta}_n = \frac{1}{n!} \frac{d^n}{dz^n}
\left(\sum_{k=0}^p \tilde{\zeta}_k z^k \right).
\]
Fix the final local expansion order $q \geq 0$.
Define $\alpha = 1/(1+c)$ and $\omega = 1/(1+\frac{c}{\lambda})$. Then
\[
\left| \sum_{k=0}^q \tilde{\beta}_k {(y-z)}^k -
\sum_{k=0}^q \beta_k {(y-z)}^k \right|
\leq
(q+1)
\left(\frac{\omega^{p+1}}{1 - \omega}\right)
+
\left(\frac{q+1}{p+1}\right)
\left(\frac{\alpha^{p+1}}{1 - \alpha} \right).
\]
\end{lemma}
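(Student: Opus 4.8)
The plan is a triangle inequality through an auxiliary set of coefficients, splitting the error into a contribution from truncating the intermediate \emph{local} expansion (which Lemma~\ref{lem:l2qbxl} already controls) and one from truncating the intermediate \emph{multipole} expansion (which I will control by a direct Cauchy estimate). Write $z_\ast \in \overline B(z_0,\lambda r)$ for the charge location, and let $B_k := \phi^{(k)}(0)/k!$ be the exact Taylor coefficients of $\phi$ at the origin -- the coefficients an \emph{untruncated} multipole-to-local translation to the origin would produce. Introduce
\[
  \beta^{[A]}_n := \frac{1}{n!}\frac{d^n}{dz^n}\Bigl(\sum_{k=0}^p B_k z^k\Bigr),
\]
the result of truncating the exact local expansion at the origin to order $p$ and then shifting its center to $z$. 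Since $\tilde\beta_n - \beta_n = (\tilde\beta_n - \beta^{[A]}_n) + (\beta^{[A]}_n - \beta_n)$, it suffices to bound the two partial sums $\sum_{k=0}^q(\cdot)_k (y-z)^k$ separately.

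The second sum is covered by Lemma~\ref{lem:l2qbxl}: the coefficients $\beta^{[A]}_n$ and $\beta_n$ are precisely the quantities named $\tilde\beta_n$ and $\beta_n$ there, applied to the charge at $z_\ast$, which satisfies $|z_\ast| \ge |z_0| - \lambda r \ge (c+1)r$; hence its hypotheses hold with the same $c$ and $\alpha = 1/(1+c)$, giving $\bigl|\sum_{k=0}^q(\beta^{[A]}_k - \beta_k)(y-z)^k\bigr| \le \frac{q+1}{p+1}\,\frac{\alpha^{p+1}}{1-\alpha}$.

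For the first sum, note that $\tilde\beta_n$ and $\beta^{[A]}_n$ are both local-to-local shifts to $z$ of degree-$p$ polynomials, $\sum_{k=0}^p \tilde\zeta_k w^k$ and $\sum_{k=0}^p B_k w^k$, so their difference is the $n$th coefficient of the shift to $z$ of the polynomial $-Q(w) := \sum_{k=0}^p(\tilde\zeta_k - B_k)w^k$. The algebraic key is that $B_k - \tilde\zeta_k = R_p^{(k)}(0)/k!$, where $R_p(w) = \sum_{k>p} a_k/(w-z_0)^k$ is the discarded multipole tail: since untruncated M2L is exact (cf.~\cite{greengard:1987:fmm}), near the origin $\phi = \psi + R_p$ with $\psi$ the $p$-truncated multipole whose M2L to the origin has coefficients $\tilde\zeta_k$, so $Q$ is exactly the degree-$p$ Taylor polynomial of $R_p$ about the origin. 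Applying Proposition~\ref{prop:cauchy-derivative-bound} to the entire polynomial $Q$ on $\overline B(z, r-|z|)$ gives $|\tilde\beta_n - \beta^{[A]}_n| \le (r-|z|)^{-n}\max_{\overline B(z,r-|z|)}|Q|$, so, using $|y-z|\le r-|z|$, that partial sum is at most $(q+1)\max_{\overline B(z,r-|z|)}|Q|$.

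The one step that needs care -- the main obstacle -- is bounding $\max_{\overline B(z,r-|z|)}|Q|$ without picking up a spurious $|z_0|$- or $p$-dependence. Every $w$ in that disk has $|w|\le r$, so $|Q(w)| \le \sum_{k=0}^p |R_p^{(k)}(0)/k!|\,r^k$; the trick is to estimate the Taylor coefficients of $R_p$ by Cauchy's inequality \emph{on the circle $|w|=r$ itself}. Because $|z_0| > (1+\lambda)r$, $R_p$ is analytic on a neighborhood of $\overline B(0,r)$, so $|R_p^{(k)}(0)/k!| \le r^{-k}\max_{|w|=r}|R_p|$; and from $|a_k|\le (\lambda r)^k/k$ together with $|w-z_0|\ge |z_0|-r \ge (c+\lambda)r$ for $|w|=r$, we get $\max_{|w|=r}|R_p| \le \sum_{k>p}\omega^k/k \le \frac{1}{p+1}\frac{\omega^{p+1}}{1-\omega}$ with $\omega = \lambda/(c+\lambda)$. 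Substituting, the $r^k$ factors cancel and the $p+1$ summands exactly absorb the $1/(p+1)$, so $\max_{\overline B(z,r-|z|)}|Q| \le \frac{\omega^{p+1}}{1-\omega}$, i.e.\ $\bigl|\sum_{k=0}^q(\tilde\beta_k-\beta^{[A]}_k)(y-z)^k\bigr| \le (q+1)\frac{\omega^{p+1}}{1-\omega}$; adding the two estimates gives the claim. The delicate point is the choice of Cauchy radius $r$ for $Q$: a larger radius would ruin the $\omega$-factor in the extreme case $|z_0| = (c+1+\lambda)r$, and a smaller one would blow up the geometric factor.
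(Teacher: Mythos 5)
Your proof is correct and follows essentially the same route as the paper: both use a triangle inequality through the $p$-truncated exact local expansion at the origin (your $\beta^{[A]}_n$, the paper's $\tau_n$), apply Lemma~\ref{lem:l2qbxl} to one difference, and bound the other via a Cauchy derivative estimate on the degree-$p$ polynomial built from the discarded multipole tail. The only cosmetic difference is that you re-derive the bound $\max|Q|\le\omega^{p+1}/(1-\omega)$ by hand, whereas the paper obtains it by invoking Lemma~\ref{lem:m2qbxl} with center $0$ and final local order $q=p$.
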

\begin{proof}
For $n \geq 0$, define $\tau_n$ as the $n$-th coefficient
of the $q$-th order local expansion at $z$ of the
potential arising from the $p$-th order local expansion
of the source potential $\phi$ at the origin:
\[
\tau_n = \frac{1}{n!} \frac{d^n}{dz^n}
\left(\sum_{k=0}^p \frac{\phi^{(k)}(0)}{k!} z^k \right).
\]
From the triangle inequality,
\[
\left| \sum_{k=0}^q \tilde{\beta}_k {(y-z)}^k - \sum_{k=0}^q \beta_k{(y-z)}^k \right|
\leq
\left| \sum_{k=0}^q \tilde{\beta}_k {(y-z)}^k
- \sum_{k=0}^q \tau_k {(y-z)}^k \right|
+
\left|
\sum_{k=0}^q \tau_k {(y-z)}^k
-
\sum_{k=0}^q \beta_k {(y-z)}^k
\right|.
\]
Realizing that the expansion with coefficients ${(\tau_k)}_{k=0}^q$ is
the result of $p$-th order truncation of an intermediate local expansion,
we can apply Lemma~\ref{lem:l2qbxl} to obtain
\begin{equation}
\label{eqn:m2l-left-equality}
\left|
\sum_{k=0}^q \tau_k {(y-z)}^k
-\sum_{k=0}^q \beta_k {(y-z)}^k
\right|
\leq \left(\frac{q+1}{p+1}\right) \left(\frac{\alpha^{p+1}}{1-\alpha}\right).
\end{equation}

To estimate
\[
\left| \sum_{k=0}^q \tilde{\beta}_k {(y-z)}^k
- \sum_{k=0}^q \tau_k {(y-z)}^k \right|,
\]
write
\[
\tau_n - \tilde{\beta}_n =
\frac{1}{n!}
\frac{d^n}{dz^n}
R_p(z)
\]
for the complex analytic function
\[
R_p(z) = \sum_{k=0}^p \left(\frac{\phi^{(k)}{(0)}}{k!} - \tilde{\zeta}_k\right) z^k.
\]
Then we have from Proposition~\ref{prop:cauchy-derivative-bound} that
\begin{equation}
\label{eqn:m2l-deriv-estimate}
|R_p^{(n)}(z)| \leq
\frac{n!}{{(r-|z|)}^n}
\left(\max_{y \in \overline B(z,r-|z|)} |R_p(y)| \right).
\end{equation}
Realizing that $R_p$ embodies the difference between a direct local expansion of
the source and one mediated by the $p$-truncated multipole expansion with
coefficients ${(\tilde{\zeta}_k)}_{k=0}^p$, both centered at the origin, we may apply
Lemma~\ref{lem:m2qbxl} to find that for $y \in \overline B(z,r-|z|) \subseteq
\overline B(0,r)$
\begin{equation}
\label{eqn:m2l-max-boundary}
|R_p(y)| \leq \frac{\omega^{p+1}}{1-\omega}.
\end{equation}
Combining (\ref{eqn:m2l-deriv-estimate}) and (\ref{eqn:m2l-max-boundary}) we
obtain
\[
|\tau_n - \tilde{\beta}_n| =
\left| \frac{R_p^{(n)}(z)}{n!} \right|
\leq \frac{1}{{(r-|z|)}^n} \left( \frac{\omega^{p+1}}{1-\omega} \right).
\]
This implies
\begin{equation}
\label{eqn:m2l-right-equality}
\left| \sum_{k=0}^q \tilde{\beta}_k {(y-z)}^k
- \sum_{k=0}^q \tau_k {(y-z)}^k \right|
=
\left| \sum_{k=0}^q (\tilde{\beta}_k - \tau_k) {(y-z)}^k \right|
\le (q+1) \left( \frac{\omega^{p+1}}{1-\omega} \right).
\end{equation}
The claim follows from combining (\ref{eqn:m2l-left-equality}) and
(\ref{eqn:m2l-right-equality}).
\end{proof}



\section{The \algbrand\ Algorithm}%
\label{sec:algorithm}

The algorithm in this section is a hierarchical fast algorithm modeled on the
adaptive Fast Multipole Method~\cite{carrier:1988:adaptive-fmm}.
Section~\ref{sec:fmm-alg-tweaks}
provided a glimpse of the differences between the conventional point FMM and our
modified version. For the benefit of readers familiar with point FMMs, these
modifications in brief amount to:
\begin{itemize}
  \item \textbf{Targets} (in the form of QBX centers) \textbf{may have an non-zero size or `extent'}. This extent is
    considered during tree construction. Interactions to the target from sources not well-separated from its
    radius are evaluated without expansion-based acceleration.
  \item To retain efficiency in the presence of this constraint, \textbf{each box has an associated
    `target confinement region'} (`TCR') which protrudes beyond the box by a fixed multiple
    of the box size. Targets (with their full extent) must fit within that
    region of a box to be eligible for inclusion in that box. If they do not fit, they will
    remain in a (larger) ancestor box.
  \item As a result of the prior point, \textbf{targets may occur in non-leaf boxes}. Interaction
    list generation must be modified to permit this.
  \item To avoid degradation of the expansion convergence factors in the
    presence of the target confinement region larger than the box, we
    \textbf{modify the basic recursion structure to consider a neighborhood two
    boxes wide} when measured from the target box instead of the classical point
    FMM's one-wide region.
  \item To further retain convergence in the presence of the TCR, we \textbf{divide} some
    of the conventional \textbf{interaction lists (particularly, List 3 and 4) into `close'
    and `far' parts}, based on whether expansion-accelerated evaluation provides
    sufficient accuracy. The `close' sub-lists are then evaluated directly,
    without acceleration.
\end{itemize}
The remainder of this section is devoted to a precise statement of the modified
algorithm as well as an analysis of its complexity.

The input to the algorithm consists of:
\begin{inparaenum}[(a)]
\item a curve $\Gamma$ discretized into panels equipped with a piecewise Gaussian quadrature rule;
\item a chosen accuracy $\epsilon > 0$;
\item a density $\mu$ with values at the points of the discretized geometry; and
\item a set of (potentially on-surface) target points at which the potential is to be
  evaluated.
\end{inparaenum}
The global accuracy parameter $\epsilon$ is used to determine the order and
truncation parameters $\pqbx$, $\pfmm$, and $\pquad$, which we describe in
Section~\ref{sec:accuracy-pars}. The geometry and targets should be preprocessed
according to Section~\ref{sec:input-geometry-preprocessing}.

\subsection{Choice of Algorithm Parameters}%
\label{sec:accuracy-pars}

The splitting (\ref{eqn:error-splitting}) of the error into FMM error,
truncation error, and quadrature error, allows us to control the error
components separately, so that in total they do not exceed the allowed precision
$\epsilon$.

The error estimates in Section~\ref{sec:error-estimates} can be used to
guarantee that the FMM error component is of order $\approx
{\left(\frac{1}{2}\right)}^{\pfmm+1}$ (cf. Theorem~\ref{thm:gigaqbx-accuracy}). Thus
we may set $\pfmm \approx \lvert \log_2 \epsilon \rvert$.

The QBX order $\pqbx$ controls the truncation error component and can be set
independently of the FMM order. Unlike the algorithm
of~\cite{rachh:2017:qbx-fmm}, our algorithm does not require an artificial order
increase to the FMM order to maintain accuracy depending on the value of
$\pqbx$. Using
(\ref{eq:qbx-truncation-estimate}) as the truncation error estimate, we see that
the truncation error should be $O(r^{\pqbx + 1})$. This means the
choice of $\pqbx$ depends on the expansion radius, and hence the length of the associated panel.

Finally, the quadrature error depends chiefly on the node count $\pquad$ of the (upsampled)
Gaussian quadrature. This error typically decays quickly in comparison
to the other error components. For instance, assuming the QBX centers are
placed a distance of $h/2$ from the panels, where $h$ is the panel width, then
the estimate (\ref{eq:qbx-quadrature-estimate}) and the surrounding discussion
imply the convergence factor for the quadrature error is approximately
${(1/2)}^{2\pquad}$. For calculations on curves in the plane, the rapid increase in accuracy
with $\pquad$ makes it expedient (if not necessarily efficient) to choose a
generic, high value (e.g. $\pquad=64$), ensuring the smallness of the quadrature error term.
The contributions~\cite{afklinteberg:2017:adaptive-qbx,afklinteberg:2016:quadrature-est}
provide precise means of estimating this error contribution.

\subsection{Preparing Geometry and Targets}%
\label{sec:input-geometry-preprocessing}

As with the algorithm in~\cite{rachh:2017:qbx-fmm}, a number of preprocessing
steps are required on the inputs, which we include in this section by reference.
The motivation and postconditions of these steps are described in
Section~\ref{sec:qbx-fast-alg}. Detailed algorithms can be found
in~\cite{rachh:2017:qbx-fmm}.

Specifically, we require that $\Gamma$ has been refined to satisfy Conditions~1,~3,~and~4
of~\cite{rachh:2017:qbx-fmm},
to control for quadrature and truncation error. Additionally,
preprocessing needs to ensure that geometry and density are upsampled to the
chosen quadrature node count $\pquad$.  Lastly, our algorithm expects that targets
needing QBX-based evaluation have been associated to expansion centers according to
the algorithm in Section 6 of~\cite{rachh:2017:qbx-fmm}.

Unless otherwise noted, we make the same parameter choices as that contribution,
concerning, e.g., center placement (cf.\ also
Section~\ref{sec:qbx-center-placement}) and oversampling.
\subsection{Tree and Interaction Lists}%
\label{sec:interaction-lists}
The inputs to our algorithm give rise to a variety of entities in the plane,
specifically source quadrature nodes, QBX centers, and target points not needing
QBX-based evaluation. We will refer to these generically as `particles'. We
disregard target points requiring QBX-based potential evaluation at this stage
because their potential evaluation needs can be met simply by evaluating the
local expansion that was obtained at the end of the algorithm at the target's
associated QBX center.

Our algorithm is based on a quadtree whose axis-aligned root box includes all
these particles as well as all the entirety of each placed expansion disk.  Each
box (even a non-leaf box) may `own' a subset of particles. The quadtree is
formed by repeatedly subdividing boxes, starting with the root box. A box is
subdivided if it owns more than $\nmax$ particles eligible to be owned by its
child boxes. If a QBX disk does not fully fit within the target confinement
region (see below) of the subdivided box, it is not eligible to be owned by the
child box, and its center remains owned by the parent box.
\subsubsection{Notation}
In the context of the quadtree described above, we introduce the following
notation:
We will use $\closedbox(r, c)$ to denote the closed $\ell^\infty$ ball
(i.e., square) of radius $r$ centered at $c$.

Let $b$ be a box in the quadtree. We will use $|b|$ to denote the radius of $b$
(i.e., half its width).
The \emph{target confinement region} (`TCR', also $\tcr(b)$) of a box $b$ with center $c$ is
$\closedbox(|b|(1 + t_f), c)$, where $t_f$ is the \emph{target confinement
factor} (`TCF').
We assume $0\le t_f<1$. A typical value is $t_f=0.9$ (cf.
Theorem~\ref{thm:gigaqbx-accuracy}).

The \emph{$k$-near neighborhood} of a box $b$ with center $c$ is the region
$\closedbox(|b|(1 + 2k), c)$.
The \emph{$k$-colleagues} of a box $b$ are boxes of the same level
as $b$ that are contained inside the $k$-near neighborhood of $b$.
$T_b$ denotes the set of $2$-colleagues of a box $b$.
Two boxes at the same level are \emph{$k$-well-separated} if
they are not $k$-colleagues.
The parent of $b$ is denoted $\parent(b)$. The
set of ancestors is $\ancestors(b)$. The set of descendants is
$\descendants(b)$. $\ancestors$ and $\descendants$ are also defined in the
natural way for sets of boxes.
A box owning a point or QBX center target is called a \emph{target box}. A box
owning a source quadrature node is called a \emph{source box}. Ancestors of
target boxes are called \emph{target-ancestor boxes}.

\begin{definition}[Adequate separation relation, $\adequatesep$]
  We define a relation $\adequatesep$ on the set
  of boxes and target confinement regions, with $a \adequatesep b$ read as `$a$
  is adequately separated from $b$, relative to the size of $a$'.

  We write $a \adequatesep b$ for boxes $a$ and $b$ if
  the $\ell^\infty$ distance between $a$ and $b$ is at least $2|a|$,
  i.e.\ the $\ell^\infty$ distance between the centers of $a$ and $b$ is at least
  $3|a|+|b|$.

  We write $a \adequatesep \tcr(b)$ for boxes $a$ and $b$ if
  the $\ell^\infty$ distance between $a$ and $\tcr(b)$ is at least $2|a|$,
  i.e.\ the $\ell^\infty$ distance between the centers of $a$ and $b$ is at least
  $3|a|+|b|(1+t_f)$.

  We write $\tcr(a) \adequatesep b$ for boxes $a$ and $b$ if
  the $\ell^\infty$ distance between $\tcr(a)$ and $b$ is at least $2|a|(1+t_f)$,
  i.e.\ the $\ell^\infty$ distance between the centers of $a$ and $b$ is at least
  $3|a|(1+t_f)+|b|$.
\end{definition}

Because the size of the TCR is proportional to the box size,
$\parent(a)\adequatesep b$ implies $a\adequatesep b$. We refer to
this property as the `\emph{monotonicity}' of `$\adequatesep$'.

\subsubsection{Interaction Lists}

The core function of the FMM is to convey interactions between boxes by ways of
multipole and local expansions. It is common for implementations to store lists of source boxes, one per
expansion/interaction type and target or target-ancestor box.
These lists are called \emph{interaction lists}.

Roughly, the FMM proceeds by obtaining multipole expansions of the sources in each box,
propagating them upwards in the tree (towards larger boxes), then
using multipole-to-local translation to convert those to local expansions where
allowable. These local expansions are then propagated down the tree and evaluated, yielding an
approximation of the far field of the box. The near field is evaluated directly,
completing the evaluation of the potential from all source boxes at each target
box. In adaptive trees (like ours), it cannot be assumed that all subtrees have
the same number of levels; additional interaction lists were introduced
in~\cite{carrier:1988:adaptive-fmm} to deal with the arising special cases.

We motivate and define the interaction lists used in our implementation
in this section.  Building on these, a precise, step-by-step statement of our
version of the FMM can be found in Section~\ref{sec:algorithm-statement}.
For a target or target-ancestor box $b$, the interaction lists $\ilist{1}{b}, \ilist{2}{b}, \ilist{3}{b},
\ilist{3close}{b}, \ilist{3far}{b}, \ilist{4}{b}, \ilist{4close}{b},
\ilist{4far}{b}$ are sets of boxes defined as follows:

\textbf{List 1} ($\ilist{1}{b}$) enumerates interactions from boxes adjacent to $b$
for which no acceleration scheme is used. This includes the interaction of the
box with itself and, since target boxes can be non-leaf boxes, also
interactions with $b$'s descendants.

\begin{definition}[List 1, $\ilist{1}{b}$]\label{def:list-1}
For a target box $b$, $\ilist{1}{b}$ consists of all leaf boxes from among
$\descendants(b) \cup \{b\}$ and the set of boxes adjacent to $b$.
\end{definition}

\textbf{List 2} ($\ilist{2}{b}$) enumerates interactions from boxes of the same
size/level as $b$ with separation to $b$ sufficient to satisfy the assumptions
for required error bounds on multipole-to-local translation.

\begin{definition}[List 2, $\ilist{2}{b}$]\label{def:list-2}
For a target or target-ancestor box $b$, $\ilist{2}{b}$ consists of the children of the
$2$-colleagues of $b$'s parent that are $2$-well-separated from $b$.
\end{definition}

\textbf{List 3} ($\ilist{3}{b}$) enumerates interactions between non-adjacent,
not 2-well-separated sources/target box pairs in which the target box $b$ is too
large (considering its separation) to receive the contribution of the source box
through multipole-to-local translation. These interactions are typically
conveyed through evaluation of the source box's multipole expansion and are
implied to cover any children of the source box. Of the descendants of the
2-colleagues of $b$, the boxes of List 3 are the first ones to become
non-adjacent to $b$ as one descends the tree towards the leaves.

\begin{definition}[List 3, $\ilist{3}{b}$]\label{def:list-3}
For a target box $b$, a box $d \in \descendants(T_b)$ is in $\ilist{3}{b}$ if
$d$ is not adjacent to $b$ and, for all $w \in \ancestors(d) \cap
\descendants(T_b)$, $w$ is adjacent to $b$.
\end{definition}

The following observations are immediate:
\begin{inparaenum}[(a)]
\item List 3 of $b$ contains the immediate children of any $2$-colleagues of
  $b$ not adjacent to $b$.
\item Any box in $\ilist{3}{b}$ is strictly smaller than $b$.
\item Any box $d \in \ilist{3}{b}$ is separated from $b$ by at least the width
  of $d$.
\end{inparaenum}

$\ilist{3}{b}$ specifies no geometric relationship of its constituent boxes to
$b$'s TCR\@. As a result, $\ilist{3}{b}$ never occurs explicitly in our
algorithm. We merely use $\ilist{3}{b}$ as a stepping stone to define
two sub-lists, $\ilist{3far}{b}$ and $\ilist{3close}{b}$ (`List 3 far' and
`List 3 close') whose definitions
take into account the existence of the TCR\@. Considering
Figure~\ref{fig:list3-sfw-accuracy}, some elements of $\ilist{3}{b}$ may be too close
to $b$ for evaluation of the source multipole to deliver the required
accuracy. Interactions between such boxes and $b$ may be handled via direct evaluation.
Since direct evaluation, unlike multipole evaluation, does not include
information from children, child boxes of too-close boxes must also be
considered. Boxes sufficiently far from $b$ make up $\ilist{3far}{b}$ (`List 3
far'), while close leaf (source) boxes comprise $\ilist{3close}{b}$.
Because of monotonicity, children of boxes in $\ilist{3far}{b}$ also
satisfy the TCR separation requirement.
As an easy consequence, observe that while
$\ilist{3close}{b}\cup \ilist{3far}{b}\subseteq \ilist{3}{b}$
does not hold in general,
$\ilist{3close}{b}\cup \ilist{3far}{b}\subseteq
\descendants(\ilist{3}{b}) \cup \ilist{3}{b}$ is generally true.

\begin{definition}[List 3 close, $\ilist{3close}{b}$]\label{def:list-3-close}
For a target box $b$, a leaf box $d$ is said to be in $\ilist{3close}{b}$
if $d \in \descendants(\ilist{3}{b}) \cup \ilist{3}{b}$ such that
$d \not \adequatesep \tcr(b)$.
\end{definition}

\begin{definition}[List 3 far, $\ilist{3far}{b}$]\label{def:list-3-far}
For a target box $b$, a box $d$ is in $\ilist{3far}{b}$
if $d \in \descendants(\ilist{3}{b}) \cup \ilist{3}{b}$ such that $d \adequatesep
\tcr(b)$ and, for all $w \in \ancestors(d) \cap (\descendants(\ilist{3}{b})
\cup \ilist{3}{b})$, $w \not \adequatesep \tcr(b)$.
\end{definition}

\textbf{List 4} ($\ilist{4}{b}$) enumerates interactions between non-adjacent,
not 2-well-separated source/target box pairs in which the source/leaf box $d$ is
too large (considering its separation) to transmit its contribution to the
target box $b$ through multipole-to-local translation. These
interactions are typically conveyed through formation of a local expansions
from the source box's sources. Since this local expansion can then participate
in the downward propagation, the interaction from $d$ does not also need to be
conveyed to $b$'s children by way of List 4. List 4 consists of non-adjacent 2-colleagues of $b$
or 2-colleagues of its ancestors.

\begin{definition}[List 4, $\ilist{4}{b}$]\label{def:list-4}
For a target or target-ancestor box $b$, a source/leaf box $d$ is
in $\ilist{4}{b}$ if $d$ is a $2$-colleague of $b$ and $d$ is not adjacent to
$b$. \emph{Additionally}, a leaf box $d$ is in List 4 of $b$ if $d$ is a
$2$-colleague of some ancestor of $b$ and $d$ is adjacent to $\parent(b)$ but
not $b$ itself.
\end{definition}

The following observations are immediate:
\begin{inparaenum}[(a)]
\item Any box in $\ilist{4}{b}$ is at least the width of $b$.
\item Any box in $\ilist{4}{b}$ is separated from $b$ by at least the width of
  $b$.
\item For any $d \in \ilist{4}{b}$, either $b \in \ilist{3}{d}$ or $d$ is a
  $2$-colleague of $b$.
\end{inparaenum}

Again, $\ilist{4}{b}$ specifies no geometric relationship of its constituent boxes to
$b$'s TCR\@. As a result, $\ilist{4}{b}$ never occurs explicitly in our
algorithm. We merely use $\ilist{4}{b}$ as a stepping stone to define
two sub-lists, $\ilist{4far}{b}$ and $\ilist{4close}{b}$
(`List 4 far' and `List 4 close') whose definitions
take into account the existence of the TCR\@. Considering
Figure~\ref{fig:list4-sfw-accuracy}, some elements of $\ilist{4}{b}$ may be too close
to $b$ to allow the resulting local expansion to deliver the required
accuracy. Interactions between such boxes and $b$ may be handled via direct evaluation.
If a source box $d$ meets the separation requirement of $\parent(b)$,
by monotonicity it will meet the separation requirements of $b$ and its
descendants. Hence it will enter the downward propagation at $\parent(b)$ and thus
need not be part of either $\ilist{4far}{b}$ or $\ilist{4close}{b}$.
Conversely, if $d\in \ilist{4}{b}$ while \emph{not} meeting the
separation requirement, it will need to be added to List 4 close of $b$
and its descendants down to the level at which it meets the requirement, at
which point its contribution enters the downward propagation via
$\ilist{4far}{b}$.

\begin{definition}[List 4 close, $\ilist{4close}{b}$]\label{def:list-4-close}
A box $d$ is in $\ilist{4close}{b}$ if for some $w \in \ancestors(b) \cup \{b\}$
we have $d \in \ilist{4}{w}$ and furthermore $\tcr(b) \not \adequatesep d$.
\end{definition}

\begin{definition}[List 4 far, $\ilist{4far}{b}$]\label{def:list-4-far}
A box $d \in \ilist{4}{b}$ is in List 4 far if $\tcr(b) \adequatesep d$.
Furthermore, if $b$ has a parent, a box $d \in \ilist{4close}{\parent(b)}$ is in
List 4 far if $\tcr(b) \adequatesep d$.
\end{definition}

\begin{remark}%
  \label{rem:far-to-close-mpole-optimization}
  In some cases, it is computationally cheaper to choose a direct interaction
  instead of an indirect one (see~\cite[Section
    2.4]{dehnen:2002:gravitational-fmm} for an example of a treecode
  optimization based on this observation). The accuracy of the
  algorithm is not impacted negatively by this choice. For example,
  a multipole-to-QBX local interaction can be more expensive than
  the corresponding direct interaction if only a small number of sources
  contribute to the multipole expansion. Such a situation can occur for a box
  $b' \in \ilist{3far}{b}$. In this case we remove $b'$ from $\ilist{3far}{b}$
  and place its leaf descendants in $\ilist{3close}{b}$. We make use of this
  possibility in Section~\ref{sec:complexity}.
\end{remark}

\subsection{Formal Statement of the Algorithm}%
\label{sec:algorithm-statement}
We use the following notation:
$\Potnear_b(t)$ denotes the potential at a target point $t$ due to all sources in $\ilist{1}{b} \cup
\ilist{3close}{b} \cup \ilist{4close}{b}$,
$\PotW{b}(t)$ denotes the potential at a target $t$ due to all sources in $\ilist{3far}{b}$,
$\Lqbxnear_{b}(t)$ denotes the (QBX) local expansion of the potential at
target/center $t$ due to all sources in $\ilist{1}{b} \cup \ilist{3close}{b} \cup
\ilist{4close}{b}$,
$\LqbxW{b}(t)$ denotes the (QBX) local expansion at target/center $t$ due to all
sources in $\ilist{3far}{b}$, and
$\Lqbxfar{b}(t)$ denotes the local expansion at target/center $t$ due to all
sources not in $\ilist{1}{b} \cup \ilist{3}{b} \cup \ilist{4close}{b}$.

\needspace{3\baselineskip}
\noindent\rule{\textwidth}{0.5pt}\\
\textbf{Algorithm:} \algbrand\ Fast Multipole Method\\
\rule[1.25ex]{\textwidth}{0.5pt}\\[-3ex]
  \def\algstage#1#2{\vspace{2ex}\begin{minipage}{0.97\textwidth}\STATE{\textit{#1}}#2\end{minipage}}
  \begin{algorithmic}
    \small%
    \REQUIRE{The maximum number of FMM targets/sources $\nmax$ per
      box for quadtree refinement and a target confinement factor $t_f$ are
      chosen.}
    \REQUIRE{Based on the precision $\epsilon$ to be achieved, a QBX
      order $\pqbx$, an FMM order $\pfmm$, and an oversampled quadrature node
      count
      $\pquad$ are chosen in accordance with Section~\ref{sec:accuracy-pars}.}
    \REQUIRE{The input geometry and targets are preprocessed according
      to Section~\ref{sec:input-geometry-preprocessing}.}
    \ENSURE{An accurate approximation to the potential
      $\mathcal{S}(\mu)$ at all target points is computed.}

    \algstage{Stage 1: Build tree}
    {%
    \STATE{Create a quadtree on the computational domain containing all sources,
      targets, and expansion centers.}
    \REPEAT{}
    \STATE{Subdivide each box containing more than $n_\text{max}$ particles into
      four children, pruning any empty child boxes. If an expansion center
      cannot be placed in a child box with target confinement factor $t_f$ due to its radius,
      it remains in the parent box.}
    \UNTIL{each box no longer needs to be subdivided or an iteration produced only
    empty child boxes}
    }

    \algstage{Stage 2: Form multipoles}{%
    \FORALL{boxes $b$}
    \STATE{Form a $\pfmm$-th order multipole expansion $\Mpole_b$ centered at $b$ due to
      sources owned by $b$.}
    \ENDFOR{}
    \FORALL{boxes $b$ in postorder}
    \STATE{For each child of $b$, shift the center of the multipole expansion at
      the child to $b$. Add the resulting expansions to $\Mpole_b$.}
    \ENDFOR{}
    }

    \algstage{Stage 3: Evaluate direct interactions}{%
    \FORALL{boxes $b$}
    \STATE{For each conventional target $t$ owned by $b$, add to $\Potnear_b(t)$
      the contribution due to the interactions from sources owned by boxes in
      $\ilist{1}{b}$ to $t$.}
    \ENDFOR{}
    \FORALL{boxes $b$}
    \STATE{For each expansion center target $t$ owned by $b$, add to the expansion
      coefficients $\Lqbxnear_{b}(t)$, the
      contribution due to the interactions from $\ilist{1}{b}$ to $t$.}
    \ENDFOR{}
    }

    \algstage{Stage 4: Translate multipoles to local expansions}{%
    \FORALL{boxes $b$}
    \STATE{For each box $d \in \ilist{2}{b}$, translate the multipole expansion
      $\Mpole_{d}$ to a local expansion centered at $b$. Add the resulting
      expansions to obtain $\Locfar_b$.}
    \ENDFOR{}
    }

    \algstage{Stage 5(a): Evaluate direct interactions due to $\ilist{3close}{b}$}{%
    \STATE{Repeat Stage 3 with $\ilist{3close}{b}$ instead of $\ilist{1}{b}$.}
    }

    \algstage{Stage 5(b): Evaluate multipoles due to $\ilist{3far}{b}$}{%
    \FORALL{boxes $b$}
    \STATE{For each conventional target $t$ owned by $b$, evaluate the multipole
      expansion $\Mpole_{d}$ of each box $d \in \ilist{3far}{b}$
      to obtain $\PotW{b}(t)$.}
    \ENDFOR{}
    \FORALL{boxes $b$}
    \STATE{For each expansion center target $t$ owned by $b$, compute the expansion
      coefficients $\LqbxW{b}(t)$, due to
      the multipole expansion $\Mpole_{d}$ of each box $d \in \ilist{3far}{b}$.}
    \ENDFOR{}
    }

    \algstage{Stage 6(a): Evaluate direct interactions due to $\ilist{4close}{b}$}{%
    \STATE{Repeat Stage 3 with $\ilist{4close}{b}$ instead of $\ilist{1}{b}$.}
    }

    \algstage{Stage 6(b): Form locals due to $\ilist{4far}{b}$}{%
    \FORALL{boxes $b$}
    \STATE{Convert the field of every particle owned by boxes in $\ilist{4far}{b}$
      to a local expansion about $b$. Add to $\Locfar_b$.}
    \ENDFOR{}
    }

    \algstage{Stage 7: Propagate local expansions downward}{%
    \FORALL{boxes $b$ in preorder}
    \STATE{For each child $d$ of $b$, shift the center of the local expansions
      $\Locfar_b$ to the child. Add the resulting expansions to
      $\Locfar_d$ respectively.}
    \ENDFOR{}
    }

    \algstage{Stage 8: Evaluate final potential at targets}{%
    \FORALL{boxes $b$}
    \STATE{For each conventional target $t$ owned by $b$, evaluate $\Locfar_b(t)$.}
    \STATE{Add $\Potnear_b(t), \PotW{b}(t), \Locfar_b(t)$ to obtain the
      potential at $t$.}
    \ENDFOR{}
    \FORALL{boxes $b$}
    \STATE{For each expansion center target $t$ owned by $b$, translate $\Locfar_b$ to
      $t$, obtaining $\Lqbxfar_b(t)$.
      }
    \STATE{Add $\Lqbxnear_{b}(t), \LqbxW{b}(t), \Lqbxfar_b(t)$ to obtain the
      QBX local expansion at $t$.}
    \ENDFOR{}
    }

  \end{algorithmic}
  \vspace{1ex}
\rule{\textwidth}{0.5pt}

\definecolor{darkergreen}{rgb}{0.0, 0.5, 0.0}
\colorlet{srccolor}{darkergreen}
\colorlet{furthestcolor}{cyan}
\colorlet{closestcolor}{purple}

\def\sqrtwo{1.4142135623730951}
\def\tcf{0.5}

\begin{figure}[ht]
  \begin{minipage}[b]{0.48\textwidth}
  \begin{tikzpicture}[scale=0.8]
    \coordinate (box1ctr) at (0,0);
    \coordinate (box2ctr) at (6,0);

    \draw [srccolor,thick] (box1ctr) ++(-1,-1) rectangle ++(2,2);
    \draw [srccolor,thick,dashed] (box1ctr) circle (\sqrtwo);

    \draw [thick] (box2ctr) ++(-1,-1) rectangle ++(2,2);
    \draw [dashed,thick] (box2ctr) ++(-1-\tcf,-1-\tcf) rectangle ++(2+2*\tcf,2+2*\tcf);
    \draw [|<->|,thick,furthestcolor] (box2ctr) -- ++(-1-\tcf/\sqrtwo,1+\tcf/\sqrtwo)
      node [pos=0.4, anchor=west, xshift=2pt]
      {$r_t(t_f+\sqrt{2})$};
    \draw [dashed,thick,furthestcolor] (box2ctr) ++(-1,1) circle (\tcf);

    \draw [|<->|,thick] (box2ctr) -- ++(0,-1) node [pos=0.5,anchor=east] {$r_t$};
    \draw [dotted] (box1ctr) ++(2,0) ++(-1,-1) rectangle ++(2,2);
    \draw [dotted] (box1ctr) ++(4,0) ++(-1,-1) rectangle ++(2,2);
    \draw [|<->|,thick,closestcolor] (box2ctr) -- ($ (box1ctr) + (\sqrtwo, 0) $)
      node [pos=0.7, anchor=south] {$\geq (6 - \sqrt{2}) r_t$};
  \end{tikzpicture}
  \vspace{0.5cm}
  \[
  \frac{\text{\color{furthestcolor}furthest target}}{\text{\color{closestcolor}closest source}}
  \leq
  \frac{\sqrt{2} + t_f}{6 - \sqrt{2}}
  \]
  \caption{%
    Separation criteria for List 2, with convergence factor calculation for a
    multipole-to-local-to-QBX local interaction.  The target box with radius
    $r_t$ is on the right.  See Lemma~\ref{lem:m2l2qbxl}.
  }%
  \label{fig:list2-sep-criteria}
  \end{minipage}
  \hfill
  \begin{minipage}[b]{0.38\textwidth}
  \centering
  \def\tcf{0.5}

  \begin{tikzpicture}
    \draw [step=1cm, thick] (-1,-1) grid (1,1);
    \draw [dashed,thick] (-1-\tcf,-1-\tcf) rectangle (1+\tcf,1+\tcf);
    \draw [dashed,thick] (-\tcf/2,-\tcf/2) rectangle (1+\tcf/2,1+\tcf/2);

    \draw [<->,thick,purple] (1,0) -- (1+\tcf/2,0) node[pos=2,anchor=west]
        {$\frac{t_f}{2}|b|$};
    \draw [<->,thick,purple] (1,-1) -- (1+\tcf,-1) node[pos=1,anchor=west] {$t_f|b|$};

    \coordinate (ctr) at (0.9, 0.9);
    \fill [thick,color=purple] (ctr) circle (1pt);
    \draw [thick,color=purple,dashed] (ctr) circle (\tcf/2 + 0.3);
  \end{tikzpicture}

  \vspace{0.5cm}

  \caption{\label{fig:neighborhood-of-suspended-center}%
    The expansion disk of a suspended center must have radius at least $t_f / 2$
    times the radius of the box $b$ that owns the center.}
  \end{minipage}
\end{figure}

\begin{figure}[ht]
  \begin{minipage}[b]{.48\linewidth}
  \centering
  \begin{tikzpicture}[scale=0.8]
    \def\srcr{0.75};

    \coordinate (box1ctr) at (0,0);
    \coordinate (box2ctr) at (4,0);

    \draw [srccolor,thick] (box1ctr) ++(-\srcr,-\srcr) rectangle ++(2*\srcr,2*\srcr);
    \draw [srccolor,thick,dashed] (box1ctr) circle (\sqrtwo*\srcr);

    \draw [thick] (box2ctr) ++(-1,-1) rectangle ++(2,2);
    \draw [dashed,thick] (box2ctr) ++(-1-\tcf,-1-\tcf) rectangle ++(2+2*\tcf,2+2*\tcf);

    \draw [|<->|,thick,closestcolor]
      (box1ctr) -- ($ (box2ctr) - (1+\tcf, 0) $)
        node [anchor=south, pos=0.7] {$\geq 3 r_s$};

    \draw [|<->|,thick] (box1ctr) -- ++(0, \srcr) node[pos=0.5,anchor=west] {$r_s$};
    \draw [|<->|,thick,furthestcolor]
      (box1ctr) -- (-\srcr,-\srcr) node[pos=0.5,anchor=west] {$\sqrt{2} r_s$};
  \end{tikzpicture}
  \vspace{0.5cm}
  \[
  \displaystyle
  \frac{\text{\color{furthestcolor}furthest source}}
  {\text{\color{closestcolor}closest target}}
  \leq
  \frac{\sqrt{2}}{3}
  \]
  \caption{%
    Separation criteria for List 3 far, with convergence factor calculation
    for a multipole-to-QBX local interaction.  The source box with radius $r_s$
    is on the left.  See Lemma~\ref{lem:m2qbxl}.
  }%
  \label{fig:list3-sep-criteria}
  \end{minipage}
  \hfill
  \begin{minipage}[b]{.48\linewidth}
  \centering
  \begin{tikzpicture}
    \def\tgtr{0.75};

    \coordinate (box1ctr) at (0,0);
    \coordinate (box2ctr) at (4,0);

    \draw [srccolor,thick] (box1ctr) ++(-1,-1) rectangle ++(2,2);

    \draw [thick] (box2ctr) ++(-\tgtr,-\tgtr) rectangle ++(2*\tgtr,2*\tgtr);
    \draw [dashed,thick] (box2ctr) ++(-\tgtr-\tgtr*\tcf,-\tgtr-\tgtr*\tcf)
      rectangle ++(2*\tgtr+2*\tgtr*\tcf,2*\tgtr+2*\tgtr*\tcf);

    \draw [|<->|,thick] (box2ctr) -- ++(0,-\tgtr) node [pos=0.5,anchor=east] {$r_t$};
    \draw [|<->|,thick,furthestcolor] (box2ctr)
      -- ++(-\tgtr-\tgtr*\tcf/\sqrtwo,\tgtr+\tgtr*\tcf/\sqrtwo)
      node [pos=0.55, anchor=west, xshift=3pt] {$r_t(t_f + \sqrt{2})$};
    \draw [dashed,thick,furthestcolor] (box2ctr) ++(-\tgtr,\tgtr) circle (\tgtr*\tcf);

    \draw [|<->|,thick,closestcolor] (box2ctr) -- ($ (box1ctr) + (1,0) $)
      node [anchor=north, pos=0.7] {$\geq 3 r_t(1+t_f)$};
  \end{tikzpicture}
  \vspace{0.5cm}
  \[
  \displaystyle
  \frac{\text{\color{furthestcolor}furthest target}}
  {\text{\color{closestcolor}closest source}}
  \leq
  \frac{\sqrt{2}+t_f}{3(1+t_f)}
  \leq \frac{\sqrt{2}}{3}
  \]
  \caption{%
    Separation criteria for List 4 far, with convergence factor calculation
    for a local-to-QBX local interaction. The target box with radius $r_t$ is on
    the right. See Lemma~\ref{lem:l2qbxl}.
  }%
  \label{fig:list4-sep-criteria}
  \end{minipage}
\end{figure}

\subsection{Accuracy of the Computed Potential}%
\label{sec:accuracy-statement}

Section~\ref{sec:error-estimates} contains the information necessary to derive
an accuracy estimate for the \algbrand~FMM\@. The interaction lists are designed
so that each interaction mediated through them has a provable convergence
factor. As we shall see in the next theorem, only the List 2 convergence factor
depends on $t_f$, and the convergence factor for Lists 3 and 4 far is fixed at
$\sqrt{2}/3$. Thus, the overall accuracy of the \algbrand~FMM is
primarily determined by the choice of $t_f$.

\begin{theorem}%
\label{thm:gigaqbx-accuracy}
Fix a target confinement factor $0 \le t_f < 6 - 2 \sqrt 2$
and define $\alpha = (t_f + \sqrt{2})/(6 - \sqrt{2}) < 1$.
There exists a constant $C$ such that for every target
point $x\in \mathbb R^2$
\[
  \left|
  \mathcal{S}_{\text{QBX}(\pqbx, N)} \mu(x)
  -
  \mathcal G_{\pfmm}\left[\mathcal{S}_{\text{QBX}(\pqbx, N)} \mu(x)\right]
  \right|
  \le
  \frac{1}{1 -\alpha} C A (\pqbx+1)
  \max{\left(\frac{\sqrt{2}}{3}, \alpha \right)}^{\pfmm+1},
\]
where $\mathcal G_{\pfmm}[\cdot]$ denotes approximation by the \algbrand\ FMM
of order $\pfmm$ and
\[
A = \sum_{i=0}^{N} |w_i \mu (y_i)|,
\]
with the $\{w_i\}$ the quadrature weights and the $\{y_i\}\subset \Gamma$
the quadrature nodes.
In particular, for $t_f \leq 3 - 3/\sqrt{2} \approx 0.87$, we obtain
\[
  \left|
  \mathcal{S}_{\text{QBX}(\pqbx, N)} \mu(x)
  -
  \mathcal G_{\pfmm}\left[\mathcal{S}_{\text{QBX}(\pqbx, N)} \mu(x)\right]
  \right|
  \le
  C A (\pqbx+1) {\left( \frac{1}{2} \right)}^{\pfmm}.
\]
C is independent of $\mu$, $t_f$, $\pqbx$, $\pfmm$, $\pquad$,
and of the curve $\Gamma$ and its discretization.
\end{theorem}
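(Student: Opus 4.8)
The plan is to reduce the global estimate to a sum, over the individual quadrature source charges, of the per-charge bounds already furnished by Lemmas~\ref{lem:m2qbxl}, \ref{lem:l2qbxl}, and~\ref{lem:m2l2qbxl}. Fix a target point $x$. If $x$ requires QBX-based evaluation it has been associated with an expansion center $t$ owned by a box $b$, and $\mathcal{S}_{\text{QBX}(\pqbx,N)}\mu(x)$ is the value at $x$ of the degree-$\pqbx$ Taylor expansion about $t$ of the quadrature-discretized potential $\sum_i w_i\mu(y_i)\log\lvert y_i-\cdot\rvert$, the constant $-1/(2\pi)$ being absorbed into $C$. By the definitions in Section~\ref{sec:algorithm-statement}, the \algbrand\ FMM produces the QBX coefficients $\Lqbxnear_b(t)+\LqbxW{b}(t)+\Lqbxfar_b(t)$, so by linearity the error at $x$ splits as a sum over the $N$ source charges $q_i:=w_i\mu(y_i)$ of the error in that charge's contribution to the degree-$\pqbx$ expansion, evaluated at $x$. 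For a target point $x$ not needing QBX the same reasoning applies with the degree-$0$ ``expansion'' and $y=z=x$, which is just the special case $q=0$ of the same lemmas; since $\pqbx+1\ge 1$ this case is dominated by the QBX case and I would treat both uniformly.

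Next I would classify each source charge by the translation chain through which its field reaches the final coefficients at $t$. A source in $\ilist{1}{b}\cup\ilist{3close}{b}\cup\ilist{4close}{b}$ contributes to $\Lqbxnear_b(t)$, formed by direct evaluation, hence contributes \emph{zero} error. A source in $\ilist{3far}{b}$ contributes to $\LqbxW{b}(t)$ through a Source $\to$ Multipole$(\pfmm)$ $\to$ Local$(\pqbx)$ chain (Lemma~\ref{lem:m2qbxl}). A source whose box lies in $\ilist{2}{w}$ for some $w\in\{b\}\cup\ancestors(b)$ contributes to $\Locfar_b$, hence to $\Lqbxfar_b(t)$, through a Source $\to$ Multipole$(\pfmm)$ $\to$ Local$(\pfmm)\to\cdots\to$ Local$(\pfmm)\to$ Local$(\pqbx)$ chain, and a source in $\ilist{4far}{w}$ for some such $w$ through a Source $\to$ Local$(\pfmm)\to\cdots\to$ Local$(\pfmm)\to$ Local$(\pqbx)$ chain. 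Here I would invoke two structural facts. First, completeness of the interaction lists: every source is accounted for exactly once across these cases, which follows from the list definitions in Section~\ref{sec:interaction-lists} just as for the classical adaptive FMM. Second, the ``forgetting'' property of truncated multipole-to-multipole and local-to-local translations cited in Section~\ref{sec:error-estimates}, together with the elementary fact that shifting a degree-$p$ truncated local expansion and re-truncating to degree $\le p$ is exact; these collapse each multi-level chain to one with at most one intermediate multipole and one intermediate local expansion plus the final shift to $t$, so that Lemmas~\ref{lem:m2qbxl}, \ref{lem:l2qbxl}, \ref{lem:m2l2qbxl} apply verbatim with $q=\pqbx$, $p=\pfmm$, and the per-charge bound scaled by $\lvert q_i\rvert$ as in the Remark preceding Lemma~\ref{lem:m2qbxl}. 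The optimization of Remark~\ref{rem:far-to-close-mpole-optimization} only swaps accelerated interactions for direct ones and cannot increase the error.

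The crux is verifying that the separation criteria built into the list definitions (via $\adequatesep$, $2$-well-separatedness, the size of $\tcr(b)$, and monotonicity of $\adequatesep$) imply the geometric hypotheses of the relevant lemma with convergence parameters of the claimed size. This is precisely what Figures~\ref{fig:list2-sep-criteria}, \ref{fig:list3-sep-criteria}, and~\ref{fig:list4-sep-criteria} encode: taking the lemma's disk $\overline B(0,r)$ to be a ball about the target box center containing the QBX disk and the evaluation point — of radius at most $r_t(\sqrt2+t_f)$, since the disk is confined to $\tcr(b)$ — and converting the $\ell^\infty$ separations of the list definitions to the $\ell^2$ gaps in the lemmas, one reads off that the lemma parameters $\omega$ and $\alpha$ satisfy $\omega,\alpha\le(t_f+\sqrt2)/(6-\sqrt2)=\alpha$ for List~2 interactions (here $\alpha<1$ is exactly the hypothesis $t_f<6-2\sqrt2$, and the bound $\omega\le\alpha$ reduces to $t_f(6-2\sqrt2-t_f)\ge0$), and $\omega,\alpha\le(t_f+\sqrt2)/(3(1+t_f))\le\sqrt2/3$ for List~3 far and List~4 far. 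I expect this $\ell^\infty$-to-$\ell^2$ bookkeeping to be the most delicate part, in particular tracking the protrusion of a confined disk beyond its box by the factor $t_f$ and the case of centers ``suspended'' in non-leaf ancestor boxes (Figure~\ref{fig:neighborhood-of-suspended-center}).

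Finally I would assemble the bound. Each of the at most $N$ source charges contributes, by the appropriate lemma, at most $\lvert q_i\rvert\cdot\frac{\pqbx+1}{\pfmm+1}\cdot\frac{(\sqrt2/3)^{\pfmm+1}}{1-\sqrt2/3}$ (List~3 far, List~4 far), or at most $\lvert q_i\rvert\big((\pqbx+1)\frac{\alpha^{\pfmm+1}}{1-\alpha}+\frac{\pqbx+1}{\pfmm+1}\frac{\alpha^{\pfmm+1}}{1-\alpha}\big)\le 2\lvert q_i\rvert(\pqbx+1)\frac{\alpha^{\pfmm+1}}{1-\alpha}$ (List~2). Summing over $i$, using $\sum_i\lvert q_i\rvert=A$, $\tfrac{1}{\pfmm+1}\le1$, and $\max(a,b)\le ab$ for $a,b\ge1$ to combine $\tfrac{1}{1-\sqrt2/3}$ (an absolute constant, absorbed into $C$) with $\tfrac{1}{1-\alpha}$, gives
\[
\left\lvert\mathcal{S}_{\text{QBX}(\pqbx,N)}\mu(x)-\mathcal G_{\pfmm}\!\left[\mathcal{S}_{\text{QBX}(\pqbx,N)}\mu(x)\right]\right\rvert\le\frac{1}{1-\alpha}\,C\,A\,(\pqbx+1)\max\!\left(\tfrac{\sqrt2}{3},\alpha\right)^{\pfmm+1},
\]
with $C$ independent of $\mu$, $t_f$, $\pqbx$, $\pfmm$, $\pquad$, and of $\Gamma$ and its discretization. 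For the second claim one checks that $t_f\le 3-3/\sqrt2$ forces $\alpha\le\tfrac12$ and that $\sqrt2/3<\tfrac12$, so $\max(\sqrt2/3,\alpha)\le\tfrac12$ and $\tfrac{1}{1-\alpha}\le2$; absorbing the $2$ and one factor of $\tfrac12$ into $C$ yields $C A(\pqbx+1)(\tfrac12)^{\pfmm}$.
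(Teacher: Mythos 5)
Your proposal is correct and takes essentially the same route as the paper: fix a target, assume WLOG it has a QBX center $c$ owned by box $b$, split each source's contribution according to whether it reaches $c$ via $\Lqbxnear_b$ (direct, no error), $\LqbxW{b}$ (List~3~far, Lemma~\ref{lem:m2qbxl}), or $\Lqbxfar_b$ (List~2 via Lemma~\ref{lem:m2l2qbxl} or List~4~far via Lemma~\ref{lem:l2qbxl}), invoke the translation-forgetting property to collapse multi-level chains to a single intermediate multipole and/or local, read off the convergence factors from Figures~\ref{fig:list2-sep-criteria}, \ref{fig:list3-sep-criteria}, \ref{fig:list4-sep-criteria}, and sum using the multi-charge remark preceding Lemma~\ref{lem:m2qbxl}. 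The paper's proof is terser, deferring the geometric bookkeeping to the figures, whereas you spell out the $\ell^\infty$-to-$\ell^2$ conversion, the $\omega \le \alpha$ check for List~2, and the reduction via the forgetting property explicitly; these are exactly the details the paper elides, and your filling-in is consistent with them. One small imprecision: for List~3~far only $\omega$ enters (Lemma~\ref{lem:m2qbxl}) and the correct bound is $\omega \le \sqrt{2}/3$ directly, not $\omega \le (t_f+\sqrt{2})/(3(1+t_f))$ — the latter is the List~4~far bound; since both are $\le \sqrt{2}/3$ this does not affect the conclusion.
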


\begin{proof}
The proof of the statement results from applying the error estimates of
Section~\ref{sec:error-estimates} to the geometric situations
resulting from the definitions of the interaction lists in
Section~\ref{sec:interaction-lists}.
We fix a target point $x$. Without loss of generality, we assume that $x$ is
associated with a QBX center. Let $c$ be the QBX center associated with $x$. Every
source point $y_i$ contributing to the summation (\ref{eq:qbx}) contributes via
either $\Lqbxnear_{b}(c)$, $\LqbxW{b}(c)$, or $\Lqbxfar_b(c)$, where $b$ is the
box that owns $c$.
For $\Lqbxnear_{b}(c)$, the contribution must arrive via a direct
interaction. This contribution incurs no error.

For $\LqbxW{b}(c)$, the contribution must arrive via a $\ilist{3far}{b}$
interaction. The contribution from all $\ilist{3far}{b}$ interactions incurs
an error of at most
\[
\frac{3A}{3-\sqrt{2}} \left(\frac{\pqbx+1}{\pfmm+1}
\right) {\left( \frac{\sqrt{2}}{3} \right)}^{\pfmm+1}.
\]
See Figure~\ref{fig:list3-sep-criteria} and Lemma~\ref{lem:m2qbxl}.
For $\Lqbxfar_b(c)$, the contribution must arrive via a $\ilist{2}{b'}$ or
$\ilist{4far}{b'}$ interaction, where $b'$ is either $b$ or an ancestor of
$b$.
The contribution from all $\ilist{4far}{b'}$ interactions incurs an error of at
most
\[
\frac{3A}{3-\sqrt{2}}
  {\left(\frac{\pqbx+1}{\pfmm+1} \right) \left( \frac{\sqrt{2}}{3}
  \right)}^{\pfmm+1}.
\]
See Figure~\ref{fig:list4-sep-criteria} and Lemma~\ref{lem:l2qbxl}.
The contribution from all $\ilist{2}{b'}$ interactions incurs an error of at most
\[
\frac{A}{1-\alpha} (\pqbx + 1) \left(1 + \frac{1}{\pfmm+1} \right) \alpha^{\pfmm + 1}.
\]
See Figure~\ref{fig:list2-sep-criteria} and Lemma~\ref{lem:m2l2qbxl}.
Figures~\ref{fig:list2-sep-criteria},~\ref{fig:list3-sep-criteria},
and~\ref{fig:list4-sep-criteria} reinterpret the convergence factor
geometrically in terms of ratios involving distances to sources and distances to
targets. These are equivalent to the definitions of the convergence factors
encountered in
Lemmas~\ref{lem:m2qbxl},~\ref{lem:l2qbxl}, and~\ref{lem:m2l2qbxl}.
Combining the estimates above yields the final error estimate.
\end{proof}

The analysis in Section~\ref{sec:error-estimates} that leads to the bound in
Theorem~\ref{thm:gigaqbx-accuracy} is not sharp due to various mathematical
simplifications, as we will see in Section~\ref{sec:results}.
Techniques similar to the ones of~\cite{petersen:1995:fmm-error-est}
may lead to sharper bounds.

\subsection{Complexity}%
\label{sec:complexity}

%
%
%
%
%
%
%
%
%
%
%
%
%
%
%
%

\begin{table}[h]
  \centering
  \begin{tabular}{ccp{0.5\textwidth}}
    \toprule
    Stage & Modeled Operation Count & Note\\

    \midrule

    Stage 1 & $NL$ & There are $N$ total particles with at most $L$ levels of
    refinement. \\

    Stage 2 & $N_S \pfmm + N_B \pfmm^2$ & $N_S \pfmm$ for forming multipoles and
    the rest for shifting multipoles upward, with each shift costing $\pfmm^2$ \\

    Stage 3 & $9 N \nmax \pqbx + N_C M_C \pqbx$ &
    Lemma~\ref{lem:list1-complexity} \\

    Stage 4 & $75N_B \pfmm^2$ & Lemma~\ref{lem:list2-complexity} \\

    Stage 5 & $\begin{aligned}[t] N_C M_C \pqbx + 64 N_C \pfmm \pqbx + \\8 N_S L
      \nmax \pqbx \end{aligned}$ & Lemma~\ref{lem:list3-complexity} \\

    Stage 6 & $63 N_B \nmax \pfmm + 42 N_C \nmax \pqbx$ &
    Lemma~\ref{lem:list4-complexity} \\

    Stage 7 & $4N_B \pfmm^2$ & The cost of shifting a local expansion
    downward is $\pfmm^2$. There are at most $4$ children per box. \\

    Stage 8 & $N_C \pfmm \pqbx$ & Cost of translating the box local
    expansions to $N_C$ centers. \\

    \bottomrule
  \end{tabular}
  \belowtableskip%
  \caption{Complexity of each stage of the \algbrand\ Algorithm.}%
  \label{tab:complexity-analysis}
\end{table}

The purpose of this section is demonstrate that the \algbrand\ algorithm has a
running time that is, roughly speaking, linear in the size of the input, assuming, roughly, that
the number of sources within a neighborhood of each QBX disk is constant.  Let
$S$ be a set of source points with $N_S = |S|$ (as obtained from the
discretization of a curve $\Gamma$ in accordance with
Section~\ref{sec:motivation}), and let $C$ be a set of expansion centers with
$N_C = |C|$. Let $N = N_S + N_C$. Let the quadtree have $N_B$ boxes and $L$
levels.

As a simplifying assumption, we restrict the complexity analysis to the case
that the set of targets at which the potential is to be evaluated is covered by QBX
expansion disks, or, in other words, we eliminate from consideration any targets
whose potential can be evaluated through the conventional FMM algorithm without
the use of QBX expansions.

We provide worst case running time bounds that apply to all particle
distributions (although they do not imply linear complexity for all particle
distributions). In particular, we do not attempt to obtain tight bounds on the
leading constant terms from the complexity analysis. To complement the
theoretical analysis with a more precise cost of each stage of the algorithm, we
offer an empirical cost model in Section~\ref{sec:op-counts-results}.

From the standpoint of complexity analysis, perhaps the most significant
difference between a point FMM and the \algbrand~FMM is that in the
\algbrand~algorithm it is no longer the case that there always are at most
$\nmax$ particles per box. For the point FMM, this feature allows for bounding
the number of near-neighborhood interactions between two
boxes~\cite{carrier:1988:adaptive-fmm}. In the \algbrand~FMM, it is possible for
any number of QBX centers to cluster inside a box in the tree due to target
confinement restrictions, and so another technique is needed to count the
near-neighbor interactions.

Because we cannot rely on there being at most $\nmax$ centers per box, our
analysis takes into account whether a QBX center is \emph{suspended} in an upper
level of the tree or not.  A QBX center that is owned by a leaf box and \emph{could} be
owned by a hypothetical child of the leaf box is called \emph{leaf-settled}. A
QBX center that is not leaf-settled is called \emph{suspended}. There are never
more than $\nmax$ leaf-settled QBX centers in a box. For suspended centers, we
make use of Proposition~\ref{prop:suspended-center-nn-interaction-cost} below,
which relates the size of the `near neighborhood' of a suspended center to the
size of the near neighborhood of its owner box.

A summary of the complexity results for each stage is given in
Table~\ref{tab:complexity-analysis}. The rest of this section provides the
details of this complexity analysis. We provide our complexity analysis in terms
of `modeled floating point operations'.  This means that while we include
constants throughout, depending on interpretation, these constants may omit a
flop-related constant factor independent of problem parameters when we felt that
no information was gained from including it for added realism. For instance, we
model the cost of evaluating an expansion of order $p$ as $p$, when more
realistic operation counts might range from $2p+1$ if multiplications and
additions are counted, to $p+1$ if a fused-multiply-add operation is assumed, to
yet different counts if the computation of powers is taken into account.

\subsubsection{Near Neighborhoods of Suspended QBX Centers}


We start with the following basic observation about suspended QBX centers.

\begin{proposition}%
\label{prop:qbx-nn-to-box-nn}%
Let $c$ be a suspended QBX center of radius $r_c$ owned by the box
$\homebox{c}$.  Then the closed square $\closedbox \left( 8r_c/t_f, c \right)$ is,
geometrically, a superset of the $1$-near neighborhood of $\homebox{c}$.
\end{proposition}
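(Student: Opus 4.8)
The plan is to bound the radius $r_c$ of a suspended center in terms of the radius $|\homebox c|$ of its owner box, and then translate that into a containment of $\ell^\infty$ balls. The key fact is the \emph{suspension} condition: $c$ is owned by $\homebox c$ rather than by a child of $\homebox c$ precisely because its expansion disk $\overline B(c, r_c)$ does not fit inside the target confinement region of the relevant child box. A child box of $\homebox c$ has radius $|\homebox c|/2$, and its TCR is an $\ell^\infty$ ball of radius $(|\homebox c|/2)(1 + t_f)$. I would first observe that $c$ itself lies in some child box $b'$ of $\homebox c$ (here I may need to be slightly careful about the degenerate case where $c$ lies exactly on a child-box boundary, but then $c$ is contained in the TCR of an adjacent child anyway, since $t_f > 0$ — in fact this is essentially the content of Figure~\ref{fig:neighborhood-of-suspended-center}).

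Next, since $\overline B(c, r_c) \not\subseteq \tcr(b')$ while $c \in b'$, and the center $c$ is at $\ell^\infty$ distance at most $|\homebox c|/2$ from the center of $b'$, I can extract a lower bound on $r_c$: the disk of radius $r_c$ around $c$ pokes out of a square that extends at least $(|\homebox c|/2)(1+t_f) - |\homebox c|/2 = (|\homebox c|/2) t_f$ beyond $c$ in the worst direction. Hence $r_c > (|\homebox c|/2) t_f$, i.e. $|\homebox c| < 2 r_c / t_f$. This is precisely the estimate recorded in Figure~\ref{fig:neighborhood-of-suspended-center} ($r_c \ge (t_f/2)|b|$ for the owner box $b$).

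With $|\homebox c| < 2 r_c / t_f$ in hand, the rest is a routine $\ell^\infty$ triangle-inequality computation. The $1$-near neighborhood of $\homebox c$ is $\closedbox(|\homebox c|(1 + 2\cdot 1), c_{\homebox c}) = \closedbox(3|\homebox c|, c_{\homebox c})$, where $c_{\homebox c}$ is the center of $\homebox c$. Since $c \in \homebox c$, we have $\|c - c_{\homebox c}\|_\infty \le |\homebox c|$, so by the triangle inequality the $1$-near neighborhood is contained in $\closedbox(3|\homebox c| + |\homebox c|, c) = \closedbox(4|\homebox c|, c)$. Substituting $|\homebox c| < 2 r_c/t_f$ gives containment in $\closedbox(8 r_c / t_f, c)$, as claimed. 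I would write this as a short chain of set inclusions.

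The main obstacle — such as it is — is pinning down the suspension condition precisely enough to justify the inequality $r_c > (t_f/2)|\homebox c|$, in particular handling the boundary/degenerate cases cleanly (a center sitting on the shared edge of two sibling child boxes, or the question of which child box to use when subdivision would have produced empty children). I expect this can be dealt with in a sentence or two by appealing to the tree-construction rule in Stage 1 of the algorithm: a center remains in the parent box exactly when no child box can accommodate its full expansion disk within that child's TCR, and a point on a child-box boundary is treated as lying in (at least) one such child. Everything else is a direct computation that I would not belabor.
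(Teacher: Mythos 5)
Your proof is correct and follows essentially the same route as the paper's: you derive the lower bound $r_c > (t_f/2)\lvert\homebox c\rvert$ from the suspension condition and then combine it with the observation that the $1$-near neighborhood of $\homebox c$ sits inside $\closedbox(4\lvert\homebox c\rvert, c)$ (by the triangle inequality, since $c \in \homebox c$), giving $\closedbox(4\lvert\homebox c\rvert, c) \subseteq \closedbox(8 r_c/t_f, c)$. You spell out the triangle-inequality step and the boundary cases more explicitly than the paper does, but the structure and key inequalities are identical.
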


\begin{proof}
Because $c$ is suspended, $c$ cannot be placed in any (hypothetical) child of
$\homebox{c}$ because it will not fit in the target confinement region.
Since a child has radius $\frac{1}{2}|\homebox{c}|$, it follows that $r_c
> \frac{t_f}{2} |\homebox{c}|$. This situation is illustrated in
Figure~\ref{fig:neighborhood-of-suspended-center}.

Regardless of where $c$ is located in $\homebox{c}$, $\closedbox(c,
4|\homebox{c}|)$ must contain the $1$-near neighborhood of $\homebox{c}$.  The
claim follows since $\frac{8}{t_f} r_c > 4 |\homebox{c}|$.
\end{proof}

Based on this proposition, we define the following parameter. Consider the input
to the \algbrand\ algorithm, with a set of centers $C$ and a set of
sources $S$. Then $M_C$ is defined as
\[
M_C = \frac{1}{N_C} \sum_{c \in C} \left|S \cap \closedbox \left(c, \frac{8}{t_f}
r_c\right) \right|
\]
where $r_c$ denotes the radius of the center $c$. In other words, $M_C$ is the
average number of sources that intersect with a square of radius $\frac{8}{t_f}
r_c$ surrounding a QBX center $c$.

The running time of \algbrand\ algorithm depends non-trivially on the
particle distribution in the tree. However, $M_C$ is a geometry-dependent
parameter that is independent of the tree structure, and can be used to provide
a worst case bound on the $1$-near neighborhood interaction cost for a suspended
center. Moreover, if the geometry is smooth and refined in such a way that the
panel sizes are locally uniform, $M_C$ will not depend on the
total number of particles. We give some values of $M_C$ for actual geometries in
Section~\ref{sec:results}.

The main utility of $M_C$ is in the following proposition.

\begin{proposition}%
\label{prop:suspended-center-nn-interaction-cost}%
The number of source-center pairs $(s, c) \in S \times C$, such that $c$ is a
suspended center and $s$ is in the $1$-near neighborhood of the box that owns
$c$, is at most $N_C M_C$.
\end{proposition}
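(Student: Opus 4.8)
The plan is to deduce Proposition~\ref{prop:suspended-center-nn-interaction-cost} directly from Proposition~\ref{prop:qbx-nn-to-box-nn} and the definition of $M_C$, by a short counting argument. First I would fix a suspended center $c \in C$ of radius $r_c$, owned by the box $\homebox{c}$. By Proposition~\ref{prop:qbx-nn-to-box-nn}, the square $\closedbox(c, 8r_c/t_f)$ geometrically contains the $1$-near neighborhood of $\homebox{c}$. Consequently, any source $s \in S$ lying in the $1$-near neighborhood of $\homebox{c}$ also lies in $\closedbox(c, 8r_c/t_f)$, so
\[
\left|\{ s \in S : s \text{ is in the $1$-near neighborhood of } \homebox{c} \}\right|
\le
\left| S \cap \closedbox\!\left(c, \tfrac{8}{t_f} r_c\right) \right|.
\]

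Next I would sum this inequality over all suspended centers $c$. Let $C_{\mathrm{susp}} \subseteq C$ denote the set of suspended centers. The number of source-center pairs $(s,c)$ with $c$ suspended and $s$ in the $1$-near neighborhood of $\homebox{c}$ equals $\sum_{c \in C_{\mathrm{susp}}} \left|\{ s \in S : s \text{ in the $1$-near neighborhood of } \homebox{c} \}\right|$, which by the pointwise bound above is at most $\sum_{c \in C_{\mathrm{susp}}} \left| S \cap \closedbox(c, \tfrac{8}{t_f} r_c) \right|$. Since each term in this sum is nonnegative, extending the sum from $C_{\mathrm{susp}}$ to all of $C$ only increases it, so the total is at most $\sum_{c \in C} \left| S \cap \closedbox(c, \tfrac{8}{t_f} r_c) \right| = N_C M_C$ by the definition of $M_C$. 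This gives the claimed bound.

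There is no real obstacle here; the argument is essentially bookkeeping once Proposition~\ref{prop:qbx-nn-to-box-nn} is in hand. The only points warranting a sentence of care are (i) making explicit that ``the $1$-near neighborhood of the box that owns $c$'' is exactly $\closedbox(|\homebox{c}|(1+2), c_{\homebox{c}})$ in the notation of the paper, and that set containment is what transfers the counting inequality; and (ii) noting that passing from the sum over suspended centers to the sum over all centers is valid precisely because the summands are counts and hence nonnegative. I would also remark in passing that $M_C$ is a purely geometric quantity, independent of the tree, which is the reason this bound is useful in the complexity analysis of Section~\ref{sec:complexity}.
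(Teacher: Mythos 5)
Your proof is correct and takes exactly the route the paper does: the paper's own proof is the one-line remark that the claim ``follows immediately from Proposition~\ref{prop:qbx-nn-to-box-nn} and the definition of $M_C$,'' and your write-up just spells out the set containment, the per-center counting inequality, and the step of extending the sum from suspended centers to all of $C$ (valid by nonnegativity of the summands). Nothing differs in substance; you have merely made the bookkeeping explicit.
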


\begin{proof}
This follows immediately from Proposition~\ref{prop:qbx-nn-to-box-nn} and the
definition of $M_C$.
\end{proof}

\subsubsection{Detailed Complexity Analysis}


In this section, we will use $\homebox{x}$ to refer to the box that owns
particle $x$ and $B$ to refer to the set of boxes in the quadtree.

\begin{proposition}%
\label{prop:num-bigger-nn-leaves}%
Let $b$ be an arbitrary box. Then there are at most $9$ \emph{leaf} boxes at
least as large as $b$ that intersect the $1$-near neighborhood of $b$.
\end{proposition}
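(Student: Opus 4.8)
The plan is to exploit the fact that leaf boxes at a common level tile the plane without overlap, together with the quadtree constraint that a leaf's parent is never strictly smaller than a box it "touches" in the near-neighbor sense. First I would set up the geometry: let $b$ have radius $|b|$ and center $c_b$, so that the $1$-near neighborhood of $b$ is the closed square $\closedbox(3|b|, c_b)$ (i.e.\ the $3\times 3$ block of boxes of $b$'s size centered on $b$). I want to bound the number of leaf boxes $d$ with $|d| \ge |b|$ that intersect $\closedbox(3|b|, c_b)$.

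The key step is a case split on the size of such a $d$. If $|d| = |b|$, then $d$ is a box at the same level as $b$ whose extent meets the $3|b|$-radius square about $c_b$; since boxes at a fixed level tile the plane, there are at most $9$ such boxes (the classical colleague-count bound). If $|d| > |b|$, i.e.\ $|d| \ge 2|b|$, then $d$ is strictly larger than $b$; but a larger leaf box that intersects the near neighborhood of $b$ must in fact \emph{contain} a sizable portion of it—more precisely, since $d$ is a dyadic box with $|d|\ge 2|b|$ intersecting a square of radius $3|b| \le \tfrac{3}{2}|d|$ centered at $c_b$, and $d$ has radius $|d|\ge 2|b|$, the box $d$ overlaps $b$'s own extent or is one of a bounded number of same-level neighbors of the ancestor of $b$ at level of $d$. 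I would argue that any such $d$ must actually intersect the slightly-enlarged square around $b$ itself, and that because $d$ is at least twice as wide, the $3\times 3$ block around $b$ is covered by at most $9$ boxes of size $|b|$, each of which lies in at most one box of size $|d|$; tracking ancestors, the larger leaves all sit among the (at most $9$) distinct ancestors-at-level-$|d|$ of those $9$ small boxes, giving at most $9$ large leaves as well. The cleanest formulation: map each qualifying leaf $d$ to the set of grid cells of side $2|b|$ it meets inside $\closedbox(3|b|+\varepsilon, c_b)$; disjointness of leaves of possibly-different sizes (a leaf never properly contains another leaf) forces an injection into a set of size $9$.

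The main obstacle I anticipate is handling the larger-than-$b$ leaves carefully: "intersects the near neighborhood" is a slightly loose condition, and I must be precise about whether I mean topological intersection of closed squares (which can meet in a shared edge) or intersection of interiors. Boundary-touching can inflate naive counts, so I would either work with half-open dyadic cells (the standard convention making the quadtree a genuine partition) or absorb edge-contact into the count and verify the bound $9$ still holds. A second, minor subtlety is that a leaf $d$ larger than $b$ might be an ancestor-sized box sitting off to the side; I would confirm via the dyadic nesting structure that its level-$(\log_2|b|)$ footprint still lands inside the $3\times 3$ block around $b$, so that it is accounted for among the $9$ same-size slots. Once the injection into a $9$-element set is established in both cases and the two cases are shown to share the same $9$ slots (the $3\times 3$ block of $b$-sized cells), the bound of $9$ follows immediately and the proof closes.
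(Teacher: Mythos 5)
Your ``cleanest formulation''---mapping each qualifying leaf to the $b$-sized cell of the $3\times 3$ block around $b$ that it contains, and using disjointness of leaf boxes to get an injection---is exactly the paper's one-line proof. The preliminary case split and the boundary-touching worry are not wrong, but they are dispensable once you note that a dyadic box at least as large as $b$ that meets the $1$-near neighborhood must (by dyadic nesting, under the usual half-open or interior-intersection convention) contain one of the nine $b$-sized cells outright.
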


\begin{proof}
Each such leaf box can be mapped injectively to $b$ or one of the $8$ colleagues
of $b$ it contains.
\end{proof}

\begin{lemma}%
\label{lem:list1-complexity}
The amount of work done in Stage 3 (direct evaluation of the potential from List
1 source boxes) is at most $$9 N \nmax \pqbx + N_C M_C \pqbx.$$
\end{lemma}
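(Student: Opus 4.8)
The plan is to recognize that the ``work'' in Stage~3 is, up to a uniform per-interaction factor, just a count of source--center pairs $(s,t)$ for which $\homebox{s}\in\ilist{1}{\homebox{t}}$. Under the simplifying assumption of Section~\ref{sec:complexity} that every target is covered by (hence replaced by) a QBX center, each such interaction amounts to forming the $\pqbx+1$ coefficients of a QBX local expansion from one source, modeled as $\pqbx$ operations (a conventional target, were one present, would cost only $1\le\pqbx$). Two structural facts make the reduction to a box-level count work: a source is never suspended (only QBX disks can fail the target-confinement eligibility test), so $\homebox{s}$ is always a \emph{leaf}; and a leaf owns at most $\nmax$ sources (otherwise it would have more than $\nmax$ particles eligible for its children and would have been subdivided) and at most $\nmax$ leaf-settled centers. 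So it suffices to count the relevant pairs of boxes with the right multiplicities.

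First I would split the count by the relative sizes of $\homebox{s}$ and $\homebox{t}$. In the regime $|\homebox{s}|\ge|\homebox{t}|$, the box $\homebox{s}\in\ilist{1}{\homebox{t}}$ is either $\homebox{t}$ itself or a box adjacent to $\homebox{t}$, hence a leaf meeting the $1$-near neighborhood of $\homebox{t}$; Proposition~\ref{prop:num-bigger-nn-leaves} then bounds the number of such $\homebox{s}$ by $9$, so there are at most $9\nmax$ pairs per center and $9\nmax N_C$ in this regime. For the complementary regime I would record the elementary geometric fact that $|\homebox{s}|<|\homebox{t}|$ and $\homebox{s}\in\ilist{1}{\homebox{t}}$ force $\homebox{s}$ into the $1$-near neighborhood of $\homebox{t}$: a proper descendant lies inside $\homebox{t}$, and a strictly smaller adjacent box has every point within $\ell^\infty$-distance $|\homebox{t}|+2|\homebox{s}|\le 3|\homebox{t}|$ of the center of $\homebox{t}$.

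Within the regime $|\homebox{s}|<|\homebox{t}|$ I would then split on whether the center $t$ is suspended. If $t$ is suspended, Proposition~\ref{prop:qbx-nn-to-box-nn} contains the whole $1$-near neighborhood of $\homebox{t}$, and therefore $s$, inside $\closedbox(8 r_t/t_f, t)$, so Proposition~\ref{prop:suspended-center-nn-interaction-cost} (equivalently, the definition of $M_C$) caps the number of such pairs by $N_C M_C$. If $t$ is not suspended, then $\homebox{t}$ is a leaf, so $\homebox{s}$ cannot be a descendant and must be a strictly smaller leaf adjacent to $\homebox{t}$; here I would count from the source side, applying Proposition~\ref{prop:num-bigger-nn-leaves} to $\homebox{s}$ to bound by $9$ the number of leaves $\homebox{t}$ at least as large as $\homebox{s}$ meeting its $1$-near neighborhood, each owning at most $\nmax$ leaf-settled centers, for at most $9\nmax$ pairs per source and $9\nmax N_S$ total. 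Summing the three contributions gives $9\nmax N_C + N_C M_C + 9\nmax N_S = 9\nmax N + N_C M_C$ pairs, and the $\pqbx$ factor yields $9N\nmax\pqbx + N_C M_C\pqbx$.

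The step I expect to be the main obstacle is the bookkeeping \emph{direction}: for a fixed center, the number of strictly smaller adjacent source boxes is not $O(\nmax)$ (a finely refined neighbor can contribute arbitrarily many source boxes), so that regime must be counted from the source, while the large-neighbor regime must be counted from the center; performing exactly this swap, together with $N_S+N_C=N$, is what produces the constant $9$ rather than a larger one such as $18$. A secondary point that must be checked with care is that \emph{every} box of $\ilist{1}{\homebox{t}}$ is accounted for regardless of its size --- in particular, that a large leaf neighbor of a tiny box owning a suspended center is handled by the size-based regime and is \emph{not} expected to be covered by the $M_C$ bound, whose reach $\closedbox(8 r_c/t_f, c)$ need not contain such a neighbor.
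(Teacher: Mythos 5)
Your proof is correct and follows essentially the same route as the paper's: splitting the source--center pairs by relative box size, bounding the ``big-neighbor'' regime per center via Proposition~\ref{prop:num-bigger-nn-leaves}, and in the ``small-neighbor'' regime splitting on suspended vs.~leaf-settled centers, using Proposition~\ref{prop:suspended-center-nn-interaction-cost} for the former and counting from the source side (again via Proposition~\ref{prop:num-bigger-nn-leaves}) for the latter. The extra care you take about the bookkeeping direction and about which cases the $M_C$ bound does \emph{not} cover is a faithful elaboration of the paper's argument rather than a departure from it.
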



\begin{proof}
We model the cost of all Stage 3 interactions as $\pqbx|U|$, where $U = \left\{ (s,c) \in S
\times C \mid \homebox{s} \in \ilist{1}{\homebox{c}} \right\}$. $U$ may be
written as the disjoint union $U = U_\mathrm{big} \cup U_\mathrm{small}$ where
$U_\mathrm{big}$ is the set of pairs $(s, c)$, such that $|\homebox{s}| \ge
|\homebox{c}|$.

We bound $|U_\mathrm{big}|$ as follows. Consider a center $c$. Then there are at
most $9$ leaf boxes larger than $\homebox{c}$ that are in or adjacent to
$\homebox{c}$, by Proposition~\ref{prop:num-bigger-nn-leaves}.  Therefore there
are at most $9 \nmax$ sources $s$ such that $(s,c) \in U_\mathrm{big}$.  Thus
$|U_\mathrm{big}| \leq 9 N_C \nmax$.

Now we bound $|U_\mathrm{small}|$. We can group the particle-center
interactions $(s, c)$ in $U_\mathrm{small}$ according to whether $c$ is
suspended or leaf settled. We consider these two cases separately. Suppose that
$c$ is suspended. Then any $s$ such that $(s, c) \in U_\mathrm{small}$ must be
in the $1$-near neighborhood of $\homebox{c}$.  By
Proposition~\ref{prop:suspended-center-nn-interaction-cost}, the number of such $(s,c)$ pairs is
at most $M_C N_C$.  Now, to consider settled centers, let $s$ be a source. From
Proposition~\ref{prop:num-bigger-nn-leaves}, the number of leaf boxes larger
than $\homebox{s}$ and adjacent to it is at most $9$. Therefore there are at
most $9 \nmax$ leaf settled centers $c$ such that $(s, c) \in U_\mathrm{small}$.

It follows that $|U_\mathrm{small}| \leq M_C N_C + 9 N_S \nmax$. Therefore the
total cost of Stage 3 is at most $9 N \nmax \pqbx + M_C N_C \pqbx$.
\end{proof}


\begin{lemma}%
\label{lem:list2-complexity}
The amount of work done in Stage 4 (translation of multipole to local
expansions) is at most $75 N_B \pfmm^2$.
\end{lemma}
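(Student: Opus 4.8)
The plan is to bound the cost of Stage 4 by counting, for each target or target-ancestor box $b$, how many boxes $d$ can appear in $\ilist{2}{b}$, and then summing over all boxes $b$, weighting each multipole-to-local translation by $\pfmm^2$ (the stated model cost of an M2L translation between expansions of order $\pfmm$). So the bulk of the argument is a purely combinatorial bound: $|\ilist{2}{b}| \le 75$ for every $b$.

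First I would recall the definition (Definition~\ref{def:list-2}): $\ilist{2}{b}$ consists of children of $2$-colleagues of $\parent(b)$ that are $2$-well-separated from $b$. The $2$-colleagues of $\parent(b)$ are the boxes at $\parent(b)$'s level lying in the $2$-near neighborhood $\closedbox(|\parent(b)|(1+4), c_{\parent(b)})$; since boxes at a fixed level tile the plane, there are at most $5^2 = 25$ such boxes (a $5\times 5$ block of boxes of $\parent(b)$'s size). Each of these has at most $4$ children, giving at most $100$ candidate boxes at $b$'s level. From these $100$ we must discard the ones that are \emph{not} $2$-well-separated from $b$, i.e.\ the $2$-colleagues of $b$ itself. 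The $2$-colleagues of $b$ form (at most) a $5\times 5$ block of boxes of $b$'s size centered on $b$, i.e.\ up to $25$ boxes; all of these lie among the $100$ candidates (since a $2$-colleague of $b$ is a child of a box adjacent-or-close to $\parent(b)$ — one should check that the $5\times 5$ block around $b$ is contained in the region spanned by the children of the $5\times 5$ block around $\parent(b)$, which follows because the latter region is a $10\times 10$ block of $b$-sized boxes centered on $\parent(b)$, comfortably containing the $5\times 5$ block centered on $b$). Hence $|\ilist{2}{b}| \le 100 - 25 = 75$.

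Next I would combine this with the cost model. Stage 4 iterates over all boxes $b$, and for each $d \in \ilist{2}{b}$ performs one M2L translation of cost $\pfmm^2$, then adds the results. So the total work is at most $\sum_{b \in B} |\ilist{2}{b}| \, \pfmm^2 \le 75 N_B \pfmm^2$, which is the claim. (The cost of summing the $\le 75$ translated expansions into $\Locfar_b$ is $O(\pfmm)$ per box and is absorbed; the paper's modeled-flop convention lets us ignore such lower-order terms, consistent with how Stages 2 and 7 are accounted.)

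The main obstacle — really the only nontrivial point — is pinning down the constant $75$ cleanly, i.e.\ verifying that every $2$-colleague of $b$ actually occurs among the children of the $2$-colleagues of $\parent(b)$, so that the subtraction $100 - 25$ is legitimate rather than merely an overcount in one direction. This is a short geometric check about how a $5\times 5$ neighborhood at the parent level refines to a $10\times 10$ neighborhood at the child level, and that the child-level $5\times 5$ neighborhood of $b$ sits inside it; with periodic boundary effects near the root handled by noting that missing boxes only make the lists smaller. Everything else is routine bookkeeping against Table~\ref{tab:complexity-analysis}'s conventions.
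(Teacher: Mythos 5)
Your proof is correct and takes essentially the same approach as the paper: bound $|\ilist{2}{b}|$ by the $10\times10$ block of children of the $5\times 5$ block of $2$-colleagues of $\parent(b)$, then subtract the $5\times 5$ block of $2$-colleagues of $b$ to get $100 - 25 = 75$, and multiply by $N_B\,\pfmm^2$. You are somewhat more careful than the paper's one-line argument in checking that the subtracted $5\times 5$ block actually lies inside the $10\times 10$ block and in noting that missing boxes near the root only shrink the lists; that verification is a legitimate tightening rather than a different route.
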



\begin{proof}
For each box $b$, $|V_b| \leq 10^2 - 5^2 = 75$ because there are at most $10^2$
children of the parent of $b$ and its $2$-colleagues, and $b$ and its $2$-colleagues
cannot be in $V_b$. There are $N_B$ boxes, and each multipole to local
translation has a modeled cost of $\pfmm^2$ operations.
\end{proof}


\begin{lemma}%
\label{lem:list3-complexity}
Assume that $0 \leq t_f < 1$. The amount of work done in Stage 5 (handling Lists
3 close and far) is at most $$N_C M_C \pqbx + 64 N_C \pfmm \pqbx + 8 N_S L \nmax
\pqbx.$$
\end{lemma}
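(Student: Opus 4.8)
The plan is to bound the work of Stage~5 by treating its two sub-stages separately. Stage~5(a) evaluates direct interactions from $\ilist{3close}{b}$ exactly as Stage~3 does for $\ilist{1}{b}$, so I would model its cost as $\pqbx|U|$ with $U=\{(s,c)\in S\times C:\homebox{s}\in\ilist{3close}{\homebox{c}}\}$, mirroring the proof of Lemma~\ref{lem:list1-complexity}. Stage~5(b) converts the $\pfmm$-th order multipole expansion of each box in $\ilist{3far}{b}$ to a $\pqbx$-th order QBX local expansion at each center in $b$, so I would model its cost as $\pfmm\pqbx|W|$ with $W=\{(d,c): d\in\ilist{3far}{\homebox{c}}\}$; the "conventional target" branches of Stage~5 are vacuous under the simplifying assumption of Section~\ref{sec:complexity}. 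The task then reduces to the counting bounds $|U|\le N_C M_C+8 N_S L\nmax$ and $|W|\le 64 N_C$, with the proviso that a small, over-priced piece of $W$ will be re-routed into a direct, $U$-shaped count.

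For $|U|$, I would first record the geometry: every $d\in\ilist{3close}{b}$ is a leaf box strictly smaller than $b$, lies in $\descendants(\ilist{3}{b})\cup\ilist{3}{b}$ and hence in the $2$-near neighborhood of $b$, and satisfies $d\not\adequatesep\tcr(b)$; combined with $|d|\le|b|/2$ this confines $d$ to a bounded dilation of $b$. I would then split $U$ by whether $c$ is suspended or leaf-settled, as in Lemma~\ref{lem:list1-complexity}. For suspended $c$ we have $r_c>\tfrac{t_f}{2}|\homebox{c}|$, so (via Proposition~\ref{prop:qbx-nn-to-box-nn} together with the $d\not\adequatesep\tcr(b)$ constraint) the sources contributing to such pairs lie in the $8r_c/t_f$-neighborhood of $c$ entering the definition of $M_C$, and Proposition~\ref{prop:suspended-center-nn-interaction-cost} bounds these pairs by $N_C M_C$. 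For leaf-settled $c$, I would instead fix the source box $\homebox{s}$ and count, over the at most $L$ coarser levels, the target boxes $b$ with $\homebox{s}\in\ilist{3close}{b}$: the size/separation constraints allow only $O(1)$ such $b$ per level, and each owns at most $\nmax$ leaf-settled centers, giving at most $8 N_S L\nmax$ pairs.

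For $|W|$, I would exploit the fact that $\ilist{3far}{b}$ is an antichain in the forest $\descendants(\ilist{3}{b})\cup\ilist{3}{b}$ and split it into a shallow and a deep part. The shallow part consists of the children of the $2$-colleagues of $b$ that are not adjacent to $b$: the $2$-near neighborhood contains at most $5^2=25$ same-level boxes, so there are at most $16$ such colleagues and hence at most $16\cdot4=64$ such children, each of which is automatically adequately separated from $\tcr(b)$ once $t_f<1$ (its $\ell^\infty$ distance to $\tcr(b)$ is at least $(2-t_f)|b|\ge|b|=2|d|$) and so genuinely belongs to $\ilist{3far}{b}$; this part contributes at most $64$ translations per center, i.e.\ $64 N_C\pfmm\pqbx$. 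The deep part comes from the subtrees of the $2$-colleagues of $b$ adjacent to $b$; here I would invoke the far-to-close optimization of Remark~\ref{rem:far-to-close-mpole-optimization}: boxes carrying at most $\pfmm$ sources are reassigned to $\ilist{3close}{b}$ and evaluated directly, while the boxes that remain carry more than $\pfmm$ sources and are therefore few --- at most one per $\pfmm$ sources in the $2$-near neighborhood of $b$ --- so that their $\pfmm\pqbx$-apiece translation cost is dominated by a near-neighborhood count of exactly the shape handled in the previous paragraph, landing in the $N_C M_C\pqbx$ and $8 N_S L\nmax\pqbx$ terms.

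I expect the main obstacle to be precisely this bookkeeping around the deep part, compounded by the fact that a box may own arbitrarily many (suspended) QBX centers, so one cannot simply multiply a crude bound on $|\ilist{3far}{b}|$ by $\nmax$. The resolution --- partitioning $\ilist{3far}{b}$ into the bounded shallow part and a deep part that is either migrated to $\ilist{3close}{b}$ or charged to its abundant sources, then reusing the suspended-versus-leaf-settled dichotomy with Propositions~\ref{prop:qbx-nn-to-box-nn} and~\ref{prop:suspended-center-nn-interaction-cost} --- is conceptually routine but requires tracking which term each sub-count contributes to, and keeping careful hold of the geometric constants relating $\tcr(b)$, the $k$-near neighborhoods, and the $8r_c/t_f$-neighborhood that defines $M_C$.
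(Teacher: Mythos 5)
Your plan for the close part mirrors the paper's almost exactly: the same suspended-versus-leaf-settled split, the same use of Proposition~\ref{prop:suspended-center-nn-interaction-cost} for the $N_C M_C$ term, and the same per-source-box, per-level count for the $8 N_S L \nmax$ term. That portion is essentially correct, though you should pin down the claim that $\ilist{3close}{\homebox{c}}$ boxes are covered by the $M_C$-square: the paper reduces this to the assertion that every box in $\ilist{3close}{\homebox{c}}$ lies inside the $1$-near neighborhood of $\homebox{c}$; your phrase ``a bounded dilation of $b$'' leaves the needed containment unverified, and Proposition~\ref{prop:suspended-center-nn-interaction-cost} only covers the $1$-near neighborhood.

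The far part is where you diverge, and where the gap is. The paper begins Stage~5's cost analysis with a simplifying, \emph{conservative} assumption: $\ilist{3far}{b}$ is taken disjoint from the $1$-near neighborhood of $b$, i.e., any $\ilist{3}{b}$-type interaction coming from inside that neighborhood is reclassified as going through $\ilist{3close}{b}$. With that assumption, every remaining box in $\ilist{3far}{b}$ descends from one of the $16$ non-adjacent $2$-colleagues, and since $t_f<1$ the immediate children of those colleagues already satisfy Definition~\ref{def:list-3-far} so the antichain terminates there; hence $|\ilist{3far}{b}|\le 64$. The close-side bound is then proved \emph{for the enlarged} $\ilist{3close}{b}$, which by construction sits inside the $1$-near neighborhood, so a single pass through the suspended/leaf-settled dichotomy yields $N_C M_C + 8 N_S L\nmax$ once and for all. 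Your proposal instead bounds $|U|$ for the original $\ilist{3close}{b}$, bounds a ``shallow'' far part by $64N_C$, and then tries to dispose of the ``deep'' far part (the boxes inside the $1$-near neighborhood) by invoking Remark~\ref{rem:far-to-close-mpole-optimization} and ``charging'' their $\pfmm\pqbx$-apiece cost to sources. This is where the accounting does not close: the deep-part charges land in the very same $N_C M_C\pqbx + 8 N_S L\nmax\pqbx$ budget you already spent on $|U|$, so as written you would obtain roughly twice that close-side budget rather than the stated constants. You would also need a separate argument that the leaf-settled counting step (``an ancestor of $\homebox{s}$ is adjacent to $\homebox{c}$'') still applies to these non-leaf far boxes. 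The clean resolution is the paper's: make the conservative reassignment \emph{before} bounding anything, so the $1$-near-neighborhood pair count is spent exactly once and $\ilist{3far}{b}$ consists only of the $\le 64$ shallow boxes. Remark~\ref{rem:far-to-close-mpole-optimization} is an implementation optimization, not the load-bearing step of the complexity proof.
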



\begin{proof}
In order to simplify the analysis, we assume that, for every box $b$,
\emph{$\ilist{3far}{b}$ is disjoint from the $1$-near neighborhood of $b$}.
In other words, for our cost analysis, we treat $\ilist{3}{b}$-type
interactions from within the $1$-near neighborhood of $b$ as being mediated
through $\ilist{3close}{b}$.  This assumption is conservative and will not lead
to an underestimation of the cost because (1) direct interactions through
$\ilist{3close}{b}$ do not subsume interactions from their children and as such
are more numerous and (2) any multipole evaluation that turns out to be more
expensive than direct evaluation of the interaction from that box and its
children may be replaced by the latter. (In fact, this latter strategy is
a viable, if minor, cost optimization. See
Remark~\ref{rem:far-to-close-mpole-optimization} for details.)

We model the cost of Stage 5 as $\pfmm \pqbx |W_\mathrm{far}| + \pqbx
|W_\mathrm{close}|$, where
\[
  W_\mathrm{far} = \left\{ (b,c) \in B \times C \mid b \in
  \ilist{3far}{\homebox{c}} \right\},
  \quad\text{and}\quad %
  W_\mathrm{close} = \left\{ (s, c) \in S \times C \mid \homebox{s} \in
  \ilist{3close}{\homebox{c}} \right\}.
\]

We bound $|W_\mathrm{far}|$ first. Let $c$ be an expansion center.  By our
assumption on $\ilist{3far}{b}$ and the 1-near-neighborhood,
every box $b$ with $(b, c) \in W_\mathrm{far}$ is a
descendant of one of the $5^2 - 3^2 = 16$ boxes that are $2$-colleagues of
$\homebox{c}$ that are not adjacent to $\homebox{c}$. Since $0 \leq t_f < 1$,
the direct descendants of $b_c$'s 2-colleagues satisfy
Definition~\ref{def:list-3-far}, and no children of the direct descendants
can be in $\ilist{3far}{b_c}$. Thus there are at most $16 \times 4$ boxes $b$ such that
$(b, c) \in W_\mathrm{far}$. In other words, $|W_\mathrm{far}| \leq 64 N_C$.

We handle $|W_\mathrm{close}|$ next.
Because $0 \leq t_f < 1$, for every center $c$, every box in
$\ilist{3close}{\homebox{c}}$ is contained inside the $1$-neighborhood of
$\homebox{c}$.
As in the proof of Lemma~\ref{lem:list1-complexity}, we will consider separately
the cases that the center is suspended or leaf settled.
In the case that the center is suspended,
Proposition~\ref{prop:suspended-center-nn-interaction-cost} bounds the number of
all such $(s, c)$ pairs by $M_C N_C$.  For the leaf settled case, let $s$ be a
source particle. Consider a pair $(s, c) \in W_\mathrm{close}$. Then an ancestor
of $\homebox{s}$ is adjacent to $\homebox{c}$. Since there are $L$ levels, there
are at most $8 L$ boxes adjacent to ancestors of $\homebox{s}$.  So there are at
most $8 L \nmax$ leaf settled centers $c$ such that $(s, c) \in
W_\mathrm{close}$.  It follows that $|W_\mathrm{close}| \leq N_C M_C + 8 N_S L
\nmax$.

Therefore the total cost of Stage 5 is at most $N_C M_C \pqbx + 64 N_C
\pfmm \pqbx + 8 N_S L \nmax \pqbx$.
\end{proof}


\begin{remark}%
\label{rem:level-restriction}
The factor of $N_S L$ in the cost estimate of Lemma~\ref{lem:list3-complexity}
implies that the cost of Stage 5 of the algorithm has a worst-case dependence on
the number of particles times the number of levels in the tree. Since there are
$\Omega(\log N)$ levels, this could lead to $\Omega(N \log N)$ algorithmic
scaling. In practice, we have not observed this for the particle distributions
we have tried and we expect $\Omega(N \log N)$ behavior to be uncommon in
geometries used for layer potential evaluation. It is conceivable that a sharper
analysis might be able to eliminate the factor.

In fact, if we assume the quadtree to be \emph{level-restricted}, i.e.\ if we
assume that adjacent leaf boxes' levels differ by at most one, then for every
leaf box $b$, it can be shown that $|\ilist{3close}{b} \cup \ilist{3far}{b}|$ is
at most a (dimension-dependent) constant. Because of this, every center in a
leaf-settled box will interact via $\ilist{3close}{b}$ with at most a constant
number of source particles, and via $\ilist{3far}{b}$ with at most a constant
number of boxes. This allows us to remove the dependence on $L$ in the
complexity estimates.

Furthermore, using a level-restricted quadtree does not affect the asymptotic
cost of any other stage of the algorithm. Starting with an arbitrary adaptive
quadtree, the cost of converting it into a level-restricted quadtree is $O(N_B)$
and the resulting tree has $O(N_B)$ boxes.
See~\cite[Theorem 1]{moore:1995:cost-of-balancing-generalized-quadtrees}.
\end{remark}

\begin{lemma}%
  \label{lem:list4-complexity}
  The amount of work done in Stage 6 (handling Lists 4
  close and far) is at most
  \[
    63 N_B \nmax \pfmm + 42 N_C \nmax \pqbx.
  \]
\end{lemma}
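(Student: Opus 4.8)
The plan is to split Stage 6 into its two substages and bound each by counting source--box or source--center pairs, weighted by the relevant expansion order. For Stage 6(b), modeling the conversion of a source's field to a $\pfmm$-th order local expansion about $b$ at $\pfmm$ operations, the cost is $\pfmm\cdot|\{(s,b):\homebox{s}\in\ilist{4far}{b}\}|$, where $b$ ranges over target and target-ancestor boxes. Every box in $\ilist{4far}{b}$ is a leaf box and therefore owns at most $\nmax$ sources, so this is at most $\pfmm\nmax\sum_b|\ilist{4far}{b}|$. For Stage 6(a), using the simplifying assumption of Section~\ref{sec:complexity} that every target is a QBX center, I model the direct contribution from a source to a center's local expansion at $\pqbx$ operations; the cost is then $\pqbx\cdot|\{(s,c):\homebox{s}\in\ilist{4close}{\homebox{c}}\}|\le\pqbx\nmax\sum_c|\ilist{4close}{\homebox{c}}|$. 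Thus the lemma follows once I establish the per-box bounds $|\ilist{4far}{b}|\le 63$ and $|\ilist{4close}{b}|\le 42$ and sum over the $N_B$ boxes (resp.\ $N_C$ centers).

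For the per-box bounds I would lean on the structural properties recorded right after Definition~\ref{def:list-4}: every box in $\ilist{4}{b}$ is \emph{at least as large as $b$}. Concretely, $\ilist{4}{b}$ decomposes into the non-adjacent $2$-colleagues of $b$ (at most $5^2-3^2=16$ boxes) together with the leaf boxes adjacent to $\parent(b)$ but not to $b$, which, being at least as large as $\parent(b)$, number at most $9$ by Proposition~\ref{prop:num-bigger-nn-leaves}. Since $\ilist{4close}{b}$ consists of boxes drawn from $\ilist{4}{w}$ over $w\in\ancestors(b)\cup\{b\}$, the same ``at least as large as $b$'' property holds for all of $\ilist{4close}{b}$, and the $\adequatesep$ condition in Definition~\ref{def:list-4-close} together with $0\le t_f<1$ confines every such box to an $\ell^\infty$ ball $\closedbox(O(|b|),c)$ about $b$'s center $c$; mapping each such box (pairwise disjoint and no smaller than $b$) to a same-or-larger aligned cell it contains inside that ball caps their number by an absolute constant. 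Using $\ilist{4far}{b}\subseteq\ilist{4}{b}\cup\ilist{4close}{\parent(b)}$ and the analogous confinement then yields the bound on $|\ilist{4far}{b}|$. I would also invoke the \emph{monotonicity} of $\adequatesep$: once $d$ qualifies for $\ilist{4far}{b}$ its field is folded into $\Locfar_b$ and carried downward, so by monotonicity $d$ appears in neither $\ilist{4far}{b'}$ nor $\ilist{4close}{b'}$ for any descendant $b'$ of $b$; this keeps the union-over-ancestors defining $\ilist{4close}{b}$ from accumulating more than the constant bound.

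The step I expect to be the main obstacle is precisely proving that $|\ilist{4close}{b}|$ --- defined as a union over \emph{all} ancestors of $b$ --- is bounded by an absolute constant rather than something that grows with the tree depth $L$. What makes this possible (and is the essential difference from List 3, whose analogous complexity estimate in Lemma~\ref{lem:list3-complexity} does carry an $L$ factor) is that every donated box is at least as large as $b$, so the aligned-cell injection inside the bounded confinement region can only succeed a bounded number of times no matter how deep the tree is. Carrying the $t_f$-dependence through the $\adequatesep$ inequalities carefully is what pins the constants to $63$ and $42$; if the most direct bookkeeping only gave a level-dependent estimate, the dependence could be removed at no asymptotic cost by passing to a level-restricted quadtree as in Remark~\ref{rem:level-restriction}.
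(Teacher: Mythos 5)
Your structure matches the paper's: model the cost of each substage via the sizes of the interaction lists, reduce the lemma to per-box bounds on $|\ilist{4close}{b}|$ and $|\ilist{4far}{b}|$, and use $\ilist{4far}{b}\subseteq\ilist{4}{b}\cup\ilist{4close}{\parent(b)}$ to chain the bounds. But two of your counting steps fall short of the specific constants in the statement, and your proposal would only deliver $75\,N_B\nmax\pfmm + 50\,N_C\nmax\pqbx$.

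First, your count $|\ilist{4}{b}|\le 16+9=25$. You apply Proposition~\ref{prop:num-bigger-nn-leaves} at $\parent(b)$ to get $9$ for the boxes adjacent to $\parent(b)$ but not $b$. That proposition makes no use of the ``not adjacent to $b$'' constraint. The paper's finer count runs the same injection argument but observes that a leaf box of size at least $|\parent(b)|$ adjacent to $\parent(b)$ but \emph{not} to $b$ can only cover colleague cells of $\parent(b)$ lying away from $b$'s corner --- and there are exactly $5$ of those $8$ cells. This gives $|\ilist{4}{b}|\le 16+5=21$.

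Second, and more substantively, you never pin down $|\ilist{4close}{b}|\le 42$. Your packing heuristic (disjoint leaf boxes, each at least as large as $b$, confined near $\tcr(b)$) is correct in spirit and would give \emph{some} absolute constant, but you neither specify the confinement radius nor extract $42$ from it, and you concede this. The paper instead proves the much sharper inclusion $\ilist{4close}{b}\subseteq\ilist{4}{b}\cup\ilist{4}{\parent(b)}$: if $b'$ is an ancestor of $b$ lying $k\ge 2$ levels above and $e\in\ilist{4}{b'}$, then $d(b,e)\ge 2^{k+1}|b|\ge 8|b|$, while every $t\in\tcr(b)$ satisfies $d(t,b)<|b|$ because $t_f<1$; the reverse triangle inequality then gives $d(t,e)>7|b|>2|b|(1+t_f)$, i.e.\ $\tcr(b)\adequatesep e$, so $e\notin\ilist{4close}{b}$. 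This immediately yields $|\ilist{4close}{b}|\le 2\cdot 21=42$ and $|\ilist{4far}{b}|\le 21+42=63$. Your monotonicity remark (once $d$ enters $\ilist{4far}{b}$ it disappears from descendants' close and far lists) protects against double-counting contributions, but it does not bound the union over ancestors that defines $\ilist{4close}{b}$; the $k\ge 2$ distance estimate above is the step you would need to supply.
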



\begin{proof}
To aid with the later analysis, we first bound $|\ilist{4}{b}|$.
Let $b \in B$. By definition of $\ilist{4}{b}$, any box $b' \in \ilist{4}{b}$ is
not adjacent to $b$, and must be adjacent to the parent of $b$ and at least as large
as the parent of $b$, or a $2$-colleague of $b$. Of the former, it is an easy
result based on the same argument as Proposition~\ref{prop:num-bigger-nn-leaves}
that there are at most $5$
\begin{tikzpicture}[scale=0.08]
  \draw [fill=gray] (0,0) rectangle ++(1,1);
  \draw (1,-1) rectangle ++(1,1);
  \draw (1,0) rectangle ++(1,1);
  \draw (1,1) rectangle ++(1,1);
  \draw (0,1) rectangle ++(1,1);
  \draw (-1,1) rectangle ++(1,1);
\end{tikzpicture}
such boxes.
There are at most 16 $2$-colleagues of $b$
not adjacent to $b$. Thus $|\ilist{4}{b}| \leq 21$.

We begin by bounding the size of $\ilist{4close}{b}$. Recall from
Definition~\ref{def:list-4-close} that $\ilist{4close}{b}$ is a subset of the
List 4's of $b$ and its ancestors. In fact, as we now show,
$\ilist{4close}{b}\subseteq \ilist{4}{b} \cup \ilist{4}{\parent(b)}$.

For any ancestor $b'$ of $b$ that is
$k$ levels above $b$, $b$ is separated by at least a distance of $2^{k+1}|b|$
from any box in $\ilist{4}{b'}$.
Now suppose that $b'$ is an ancestor of $b$ that is $k \geq 2$ levels above
$b$. Let $e \in \ilist{4}{b'}$. Then $d(b, e) \geq 8|b|$. Furthermore,
for any point $t\in\tcr(b)$,
$d(t, b) < |b|$ since $0 \leq t_f < 1$. From the reverse triangle
inequality,
\[ 4|b| < 7|b| < |d(t, b) - d(b, e)| \leq d(t, e). \]
This means that $\tcr(b) \adequatesep e$, i.e. $e\not\in \ilist{4close}{b}$. Since $e$ was taken from the List 4 of
a grandparent of $b$ or higher, $\ilist{4close}{b}$ must be a subset of
$\ilist{4}{b} \cup \ilist{4}{\parent(b)}$. By the earlier argument,
$|\ilist{4close}{b}| \leq 42$.

Recall from Definition~\ref{def:list-4-far} that $\ilist{4far}{b}$ is a subset of $\ilist{4}{b}\cup
\ilist{4close}{\parent(b)}$. It follows that $|\ilist{4far}{b}| \leq 21 + 42 =
63$.
Therefore the total cost of Stage 6 is at most is at most $63 N_B \nmax \pfmm +
42 N_C \nmax \pqbx$.
\end{proof}


The next theorem summarizes the contents of this section. It is useful to
consider the tree build phase (Stage 1) and the evaluation phase (the remaining
stages) as conceptually distinct phases of the algorithm, with their own cost
analysis. In the context of solving integral equations, the tree build typically only
needs to be done once, while evaluation may need to be done many times, such as in the inner
iteration of an iterative method like GMRES~\cite{saad_gmres_1986}. The tree build has a
straightforward cost of $O(NL)$. The cost of the evaluation phase is more
complicated to analyze.

As is the case for most adaptive point FMMs, linear running time is achievable
under some set of additional assumptions on the particle distribution. For
instance, in the paper~\cite{carrier:1988:adaptive-fmm}, the algorithm is shown
to be linear-time only for particle distributions with at most $\lvert \log_2
\epsilon \rvert$ levels in the tree. Unlike~\cite{carrier:1988:adaptive-fmm}, we
choose not to make any assumptions on the number of levels of the tree.  We
instead establish that our algorithm (with a level-restricted quadtree) has a
cost at most \emph{linear in the number of boxes $N_B$}. Under the additional
assumption of $N_B = O(N)$, cost linear in $N_B$ implies that cost is
also linear in the number of particles $N$. This assumption is weaker than
limiting the number of levels of the tree. Recent work on adaptive
FMMs~\cite{pouransari:2015:adaptive-fractal-fmm} seeks to show that the adaptive
FMM has time complexity linear in the number of particles irrespective of the
particle distribution, by some modification to the basic FMM algorithm. Our
variant of the FMM should be amenable to these modifications should the need
arise.

\begin{theorem}%
\label{thm:gigaqbx-cost}
(a) The cost of the tree build phase of the \algbrand~FMM is $O(NL)$.

(b) Assume that $\pfmm = O(\lvert \log \epsilon \rvert)$, and that $\pqbx \le \pfmm$. For a
fixed value of $\nmax$, the modeled cost of the evaluation stage of the
\algbrand~FMM is $O((N + N_B) \lvert \log \epsilon \rvert^2 + NL \lvert \log \epsilon \rvert + N_C M_C
\lvert \log \epsilon \rvert)$. Using a level-restricted quadtree, the modeled cost is $O((N
+ N_B) \lvert \log \epsilon \rvert^2 + N_C M_C \lvert \log \epsilon \rvert)$. If the particle
distribution satisfies $N_B = O(N)$ and $M_C = O(1)$, the worst-case modeled
cost using a level-restricted quadtree is linear in $N$.
\end{theorem}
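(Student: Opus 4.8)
The plan is to combine the per-stage complexity bounds of Table~\ref{tab:complexity-analysis} (i.e. Lemmas~\ref{lem:list1-complexity} through~\ref{lem:list4-complexity} together with the elementary accounting for Stages 1, 2, 7, and 8) into a single bound, and then simplify under the hypotheses $\pfmm = O(\lvert\log\epsilon\rvert)$ and $\pqbx \le \pfmm$. Part~(a) is immediate: Stage 1 costs $O(NL)$ by the table (there are $N$ particles and at most $L$ levels of refinement), and this is stated directly.

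For part~(b), the first step is to substitute $\pqbx \le \pfmm = O(\lvert\log\epsilon\rvert)$ into each stage's modeled operation count. Stage~2 becomes $O(N_S \lvert\log\epsilon\rvert + N_B \lvert\log\epsilon\rvert^2)$; Stage~3 (Lemma~\ref{lem:list1-complexity}) becomes $O(N\nmax\lvert\log\epsilon\rvert + N_C M_C \lvert\log\epsilon\rvert)$, which for fixed $\nmax$ is $O(N\lvert\log\epsilon\rvert + N_C M_C\lvert\log\epsilon\rvert)$; Stage~4 (Lemma~\ref{lem:list2-complexity}) becomes $O(N_B\lvert\log\epsilon\rvert^2)$; Stage~5 (Lemma~\ref{lem:list3-complexity}) becomes $O(N_C M_C\lvert\log\epsilon\rvert + N_C\lvert\log\epsilon\rvert^2 + N_S L\nmax\lvert\log\epsilon\rvert)$, i.e.\ $O(N_C M_C\lvert\log\epsilon\rvert + N\lvert\log\epsilon\rvert^2 + NL\lvert\log\epsilon\rvert)$ for fixed $\nmax$; Stage~6 (Lemma~\ref{lem:list4-complexity}) becomes $O(N_B\nmax\lvert\log\epsilon\rvert + N_C\nmax\lvert\log\epsilon\rvert) = O((N_B + N)\lvert\log\epsilon\rvert)$; Stage~7 is $O(N_B\lvert\log\epsilon\rvert^2)$; and Stage~8 is $O(N_C\lvert\log\epsilon\rvert^2) = O(N\lvert\log\epsilon\rvert^2)$. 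Summing these and collecting the dominant terms yields the claimed
\[
  O\bigl((N + N_B)\lvert\log\epsilon\rvert^2 + NL\lvert\log\epsilon\rvert + N_C M_C\lvert\log\epsilon\rvert\bigr),
\]
where the only $L$-dependence survives through the $N_S L\nmax$ term of Stage~5.

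The next step is to invoke the level-restricted-quadtree observation of Remark~\ref{rem:level-restriction}: when adjacent leaf boxes differ in level by at most one, $|\ilist{3close}{b} \cup \ilist{3far}{b}|$ is bounded by a dimension-dependent constant for every leaf box $b$, so the $N_S L$ factor in Stage~5 collapses to $O(N_S)$, removing the $NL\lvert\log\epsilon\rvert$ term; and, as noted in the same remark (citing~\cite[Theorem~1]{moore:1995:cost-of-balancing-generalized-quadtrees}), converting an adaptive quadtree to a level-restricted one costs $O(N_B)$ and preserves $N_B$ asymptotically, so no other stage is affected. This gives the modeled cost $O((N + N_B)\lvert\log\epsilon\rvert^2 + N_C M_C\lvert\log\epsilon\rvert)$. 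Finally, under $N_B = O(N)$ and $M_C = O(1)$ (and $N_C \le N$), every term is $O(N\lvert\log\epsilon\rvert^2)$, which is linear in $N$ for fixed $\epsilon$; this establishes the last sentence.

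The main obstacle — really the only non-bookkeeping point — is making precise the claim in Remark~\ref{rem:level-restriction} that, in a level-restricted quadtree, each leaf box's combined List~3 is of constant size, since that is what severs the dependence on $L$. The argument is geometric: for a leaf box $b$, any box contributing a List~3-type interaction is a descendant of a $2$-colleague of $b$ that has first become non-adjacent to $b$; under the level restriction such a descendant can be at most one level below $b$'s colleagues, so it is drawn from a bounded set of boxes near $b$, and the close/far split only redistributes these finitely many boxes (possibly replacing a far box by its $O(1)$ leaf descendants, again bounded under level restriction). I would state this as a short lemma with the constant left implicit, or simply cite the remark and~\cite{moore:1995:cost-of-balancing-generalized-quadtrees} as the excerpt already does, since a fully detailed count of the constant adds little. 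Everything else is routine substitution and collection of dominant terms from the table.
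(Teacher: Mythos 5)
Your proposal is correct and follows essentially the same approach as the paper's own (much terser) proof: summing the per-stage bounds from Table~\ref{tab:complexity-analysis}, bounding $\pqbx$ and $\pfmm$ by $O(\lvert\log\epsilon\rvert)$, and invoking Remark~\ref{rem:level-restriction} to kill the $L$-factor from Stage 5. Your closing observation — that the only non-trivial step is the constant-size-List-3 claim in the level-restricted case, which the paper also leaves at the level of a remark — is accurate and well placed.
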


\begin{proof}
The proof of this theorem is evident from adding up the cost of the
individual stages of the algorithm as given in
Table~\ref{tab:complexity-analysis}. The cost of Stage 1 (the tree build phase)
is immediate from the table. The cost of the remaining stages may be obtained by
bounding $\pqbx$ and $\pfmm$ by $O(\lvert \log \epsilon \rvert)$. The factor of $L$ in Stage
5 may be eliminated by using a level-restricted quadtree, cf.
Remark~\ref{rem:level-restriction}.
\end{proof}




\section{Results}%
\label{sec:results}

\def\starfishquadorder{8}
\def\starfishovsmp{4}
\def\starfishtotalquadorder{32}
\def\starfishinitialpanelsperarm{50}
\def\starfishgreentestarms{65}
\def\starfishgreentestpanels{3250}
\def\complexitygigaqbxavgerrorthree{\num{4.97e-05}}
\def\complexitygigaqbxavgerrorseven{\num{2.97e-06}}
\def\complexityqbxfmmavgerrorthree{\num{5.37e-05}}
\def\complexityqbxfmmavgerrorseven{\num{3.08e-06}}
\def\nmaxgigaqbx{64}
\def\nmaxqbxfmm{128}


\begin{figure}
  \centering
  \input{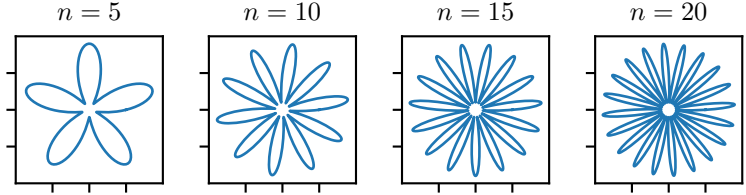}%
  \caption{%
    A subset of the `starfish' test geometries used to obtain many
    of the results of Section~\ref{sec:results}.}%
  \label{fig:starfish}
\end{figure}

In this section, we illustrate the numerical accuracy and cost scaling of the
\algbrand\ FMM
algorithm. We also perform a cost comparison of this
algorithm with the prior algorithm of~\cite{rachh:2017:qbx-fmm}.

The experimental setup is as follows. We use a family of test geometries,
parameterized by $n \in \mathbb{N}$, that form a `starfish' curve $\gamma_n :
[0,1] \to \mathbb{R}^2$ whose parametrization is given by
\begin{equation}
\label{eqn:starfish}
\gamma_n(t) =
\left(1 + 0.8 \sin(2 \pi n t) \right)
\begin{pmatrix}
  \cos(2\pi t) \\
  \sin(2 \pi t)
\end{pmatrix}.
\end{equation}
With increasing $n$, the starfish geometry has a larger number of more
closely-spaced `arms'. See Figure~\ref{fig:starfish} for graphical
renditions of some of these geometries. We make use of these geometries
because we have empirically found them to present a demanding scenario for layer
potential evaluation, with varying feature and panel sizes, close spacing
of unconnected parts of the geometry, and large (and scalable) overall size.
We have found these characteristics to present an adequate challenge both the
accuracy and the scalability of a layer potential evaluation code in a way
that is representative of smooth source geometries `in the wild'.


\subsection{Accuracy}%
\label{sec:accuracy-results}

{
  \renewrobustcmd{\bfseries}{\fontseries{b}\selectfont}
  \newcommand{\converged}[1]{\bfseries #1}
  \sisetup{
    table-format = 1.2e-1,
    table-number-alignment = center,
    table-sign-exponent = true,
    round-mode = places,
    round-precision = 2,
    detect-weight = true,
    mode = text,
  }
  \begin{table}
    \centering
\begin{tabular}{S[table-format = 1e-1, round-precision = 0]cSSSS}
\toprule
{$(1/2)^{\pfmm+1}$} & {$\pfmm$} & {$\pqbx=3$} & {$\pqbx=5$} & {$\pqbx=7$} & {$\pqbx=9$}\\
\midrule
{0} & (direct) & 4.347051e-06 & 6.208251e-07 & 1.046014e-07 & 5.707204e-08\\
6.250000e-02 & 3 & 4.191135e-03 & 4.205747e-03 & 4.205641e-03 & 4.205793e-03\\
1.562500e-02 & 5 & 2.790554e-04 & 3.423915e-04 & 3.432574e-04 & 3.433527e-04\\
4.882812e-04 & 10 & \converged{4.350459e-06} & 1.358643e-06 & 1.710896e-06 & 1.756206e-06\\
1.525879e-05 & 15 & \converged{4.347050e-06} & \converged{6.208327e-07} & \converged{1.045694e-07} & 5.739338e-08\\
4.768372e-07 & 20 & \converged{4.347051e-06} & \converged{6.208251e-07} & \converged{1.046015e-07} & \converged{5.707204e-08}\\
\bottomrule
\end{tabular}
    \belowtableskip%
    \caption{$\ell^\infty$ error in Green's formula $\mathcal S(\partial_n
      u)-\mathcal D(u)=u/2$, scaled by $1/\|u\|_\infty$, for the
      $\starfishgreentestarms$-armed starfish
      $\gamma_{65}$, using the \algbrand\ algorithm.  $\pfmm$ denotes the FMM
      order and $\pqbx$ the QBX order.  The geometry was discretized with
      $\starfishgreentestpanels$ Gauss-Legendre panels, with $33$ nodes per
      panel. Idealized point FMM error ${(1/2)}^{\pfmm+1}$ included for
      comparison. Entries in bold indicate that the FMM error is negligible.}%
    \label{tab:starfish-accuracy}
    \intertableskip%
    \begin{tabular}{S[table-format = 1e-1, round-precision = 0]cSSSS}
      \toprule
      {$(1/2)^{\pfmm+1}$} & {$\pfmm$} & {$\pqbx=3$} & {$\pqbx=5$} & {$\pqbx=7$} & {$\pqbx=9$}\\
      \midrule
      {0} & (direct) & 4.347051e-06 & 6.208251e-07 & 1.046014e-07 & 5.707204e-08\\
      6.250000e-02 & 3 & 2.546447e-02 & 2.963941e-02 & 4.074078e-02 & 5.766060e-02\\
      1.562500e-02 & 5 & 6.942927e-03 & 1.613796e-02 & 2.289611e-02 & 3.102830e-02\\
      4.882812e-04 & 10 & 4.952200e-04 & 1.749411e-03 & 5.803410e-03 & 9.479241e-03\\
      1.525879e-05 & 15 & 1.582358e-05 & 1.845764e-04 & 6.402331e-04 & 3.171478e-03\\
      4.768372e-07 & 20 & \converged{4.346750e-06} & 1.314485e-05 & 8.993484e-05 & 5.012005e-04\\
      \bottomrule
    \end{tabular}
    \belowtableskip%
    \caption{Analogous data to Table~\ref{tab:starfish-accuracy}
      for the conventional QBX FMM algorithm of~\cite{rachh:2017:qbx-fmm}.
    }%
    \label{tab:starfish-accuracy-old}
  \end{table}
}

We test the accuracy of the algorithm of this paper in a sequence of
experiments. To assess accuracy, we employ \emph{Green's formula} on
the source geometry. Let $\Gamma$ be the boundary of the domain. Let $u$ be a
harmonic function defined inside the domain and extending smoothly to the
boundary. Because $u$ extends smoothly to the boundary of the domain, the normal
derivative $\partial_n u$ at the boundary is well-defined. Then Green's formula
(e.g.~\cite[Theorem 6.5]{kress:2014:integral-equations}) states that for $x\in\Gamma$,
\[ \mathcal{S} (\partial_n u)(x) - \mathcal{D}(u)(x) = \frac{u(x)}{2}. \]
We use the residual in Green's formula as a convenient proxy for the accuracy
attained in the evaluation of the layer potential evaluation as well as the
overall accuracy attainable in application problems, in particular in the
context of the solution of boundary value problems.
We conducted two sets of experiments in this section. The first set is summarized
in Table~\ref{tab:bvp-green-accuracy}, which presents data to support the
assertion that the residual in Green's formula is a reasonable proxy for the
error in the solution of boundary value problems.
In the second set of experiments, we measured the residual in the
evaluation of Green's formula for a complicated geometry. We let $u$ be
the potential due to a charge located outside
$\Gamma$ at $(2,1)$. We compute approximations to $\mathcal{S} (\partial_n u) -
\mathcal{D}(u)$ and report the error in the discrete infinity norm.
The error reported is the absolute error scaled by $1/\|u\|_\infty$ (so that it is
relative to the magnitude of $u$). We use
the starfish curve with $n = \starfishgreentestarms$ and test with various
combinations of QBX order $\pqbx$ and FMM order $\pfmm$. $\gamma_{65}$ was
discretized with $\starfishgreentestpanels$ Gauss-Legendre panels with
$\pgfmathtruncatemacro{\pgmres}{\starfishquadorder+1}\pgmres$ nodes oversampled
to panels with
$\pgfmathtruncatemacro{\pgmres}{\starfishtotalquadorder+1}\pgmres$ nodes
(cf. Section~\ref{sec:qbx}). The curve was subsequently refined according to the
refinement criteria of Section~\ref{sec:qbx-geometry-preprocessing}.

Table~\ref{tab:starfish-accuracy}
shows the results of these experiments for the \algbrand\ FMM, varying $\pqbx$ across columns
and $\pfmm$ across rows.
The error incurred in unaccelerated QBX is shown in the first row of results.
This value represents a lower bound on the accuracy of the scheme for errors of
a given QBX order (as shown within a column); no error obtained with
acceleration (as shown in the remaining rows) will be meaningfully smaller.
Any error beyond the value in the first row is necessarily attributable
to the effects of acceleration.
We show table entries in bold if they do not significantly exceed the
error value for unaccelerated QBX, indicating that the error contribution
of FMM acceleration is negligible.

We choose the target confinement factor as $t_f=0.9$.  For that value of $t_f$,
Theorem~\ref{thm:gigaqbx-accuracy} roughly establishes $\|u\|_\infty {(1/2)}^{\pfmm+1}$ as
a bound on the absolute error incurred by acceleration, neglecting a factor of
$(\pqbx+1)$ and a number of other factors that do not vary across the entries of
the table.
We show ${(1/2)}^{\pfmm+1}$ in the left column of the table.
We find that the results support $\|u\|_\infty {(1/2)}^{\pfmm+1}$ as an asymptotic upper
bound on the error, and, in turn, the assertion that the error in the potential computed
via the \algbrand\ FMM is bounded simply by
\begin{equation}
  | \text{unaccelerated QBX error}| + \|u\|_\infty {(1/2)}^{\pfmm+1},
  \label{eq:gigaqbx-sum-bound}
\end{equation}
consistent with~\eqref{eqn:error-splitting}.
In addition, we observe that the bound~\eqref{eq:gigaqbx-sum-bound} lends itself to the
simple interpretation that the additional error in the potential incurred from
due to \algbrand\ FMM acceleration is asymptotically (in $\pfmm$) the same as the error incurred in the
evaluation of a point potential in the adaptive FMM of~\cite{carrier:1988:adaptive-fmm}.

Table~\ref{tab:starfish-accuracy} also allows us to assess the sharpness
of the analysis underpinning Theorem~\ref{thm:gigaqbx-accuracy}.
In the regime where the error is dominated by
the contributions of FMM acceleration (the upper part of the table),
we observe a match between the bound and the behavior of the error in asymptotic
behavior, although concrete error values are overestimated by around two orders
of magnitude.

Table~\ref{tab:starfish-accuracy-old} shows an analogous set of results for the
conventional QBX FMM of~\cite{rachh:2017:qbx-fmm}.
We find that the conventional QBX FMM is also able to match the error achieved
by unaccelerated QBX, albeit at considerably higher $\pfmm$ than our scheme.
The relationship between
$\pqbx$, $\pfmm$ and the error is more complicated than the simple
bound of~\eqref{eq:gigaqbx-sum-bound}. As a matter of fact,
only empirical error data were shown in~\cite{rachh:2017:qbx-fmm}.
Most poignantly perhaps, for the conventional QBX FMM, the error contribution
due to acceleration
is \emph{not} bounded by the FMM error incurred in a corresponding point FMM, and
the error appears to degrade with increasing QBX order as $\pfmm$ is held fixed.
Our scheme exhibits neither of these two issues.

The difference in behavior between the two schemes is easily explained.
The proofs of Lemma~\ref{lem:m2qbxl} and Lemma~\ref{lem:l2qbxl}
give bounds on the error of individual expansion coefficients.
Careful study shows that in a generic FMM translation operator, the higher order
coefficients are approximated less accurately than the lower order coefficients,
e.g.\ in formula~(\ref{eqn:m2qbxl-coeff-estimate}). While this issue in
principle applies to both versions of the scheme, the additional geometric
restrictions in our version mitigate the impact of this phenomenon by
controlling the amplification of this error by a geometric condition.

\begin{table}[t]
  \begin{center}
  \renewrobustcmd{\bfseries}{\fontseries{b}\selectfont}
  \newcommand{\converged}[1]{\bfseries #1}
  \sisetup{
    table-format = 1.2e-1,
    table-number-alignment = center,
    table-sign-exponent = true,
    round-mode = places,
    round-precision = 2,
    detect-weight = true,
    mode = text,
  }
\begin{tabular}{S[table-format = 1e-1, round-precision = 0]cScScScSc}
\toprule
{$(1/2)^{\pfmm+1}$} & {$\pfmm$} & {$\pqbx=3$} & \#it & {$\pqbx=5$} & \#it & {$\pqbx=7$} & \#it & {$\pqbx=9$} & \#it\\
\midrule
6.250000e-02 & 3 & 5.318344e-03 & 197 & 4.462246e-03 & 198 & 4.488637e-03 & 198 & 4.494761e-03 & 198\\
 &  & 2.542463e-03 &  & 2.480026e-03 &  & 2.477633e-03 &  & 2.477992e-03 & \\
\cmidrule{1-10} 1.562500e-02 & 5 & \converged{2.547243e-03} & 183 & 3.047912e-04 & 179 & 2.722968e-04 & 181 & 2.637143e-04 & 184\\
 &  & 2.379409e-04 &  & 2.545438e-04 &  & 2.546119e-04 &  & 2.546730e-04 & \\
\cmidrule{1-10} 4.882812e-04 & 10 & \converged{2.707797e-03} & 183 & \converged{5.283574e-05} & 179 & 1.508989e-05 & 180 & 1.644718e-06 & 183\\
 &  & \converged{1.110507e-04} &  & \converged{2.076393e-05} &  & \converged{5.176632e-06} &  & \converged{2.490217e-06} & \\
\cmidrule{1-10} 1.525879e-05 & 15 & \converged{2.707631e-03} & 183 & \converged{5.282786e-05} & 179 & \converged{1.502215e-05} & 180 & \converged{8.925412e-07} & 183\\
 &  & \converged{1.110462e-04} &  & \converged{2.076447e-05} &  & \converged{5.176010e-06} &  & \converged{2.496164e-06} & \\
\cmidrule{1-10} 4.768372e-07 & 20 & \converged{2.707631e-03} & 183 & \converged{5.282862e-05} & 179 & \converged{1.502090e-05} & 180 & \converged{8.949539e-07} & 183\\
 &  & \converged{1.110462e-04} &  & \converged{2.076447e-05} &  & \converged{5.176010e-06} &  & \converged{2.496163e-06} & \\
\bottomrule
\end{tabular}
  \belowtableskip%
  \caption{%
    A comparison of relative $\ell^\infty$ errors attained at a set of target
    points in the solution of an exterior Neumann boundary value
    problem~\eqref{eq:laplace-bc} using the integral
    equation~\eqref{eq:ext-neumann-ie} (top row of each segment) with errors
    attained in the residual of Green's formula on $\gamma_{25}$ (bottom row of
    each segment). Discretization parameters for both problem types as well as
    the procedure for obtaining the residual in Green's formula are as in
    Table~\ref{tab:starfish-accuracy}.  Iteration counts for unpreconditioned
    GMRES are shown in the columns labeled `\#it'.  The discrete linear system
    used the weighting technique of~\cite{bremer_nystrom_2011}.  To `manufacture' a
    reference solution of the BVP, point potentials were evaluated originating
    from sources at locations $0.75{[\cos\alpha_i,\sin\alpha_i]}^T$ with
    `charges' randomly assigned according to a standard normal distribution.  The angles
    $\alpha_i$ are given by $\alpha_i=\pi/2+2\pi i/25$ ($i\in\{0,\dots,24\}$).
    The $\ell^\infty$ norm of the vector of differences between the
    `manufactured' potentials and the potentials from the BVP solve at the
    target points at locations $1.5{[\cos(\pi+\alpha_i),\sin(\pi+\alpha_i)]}^T$
    was computed and is shown in the table.
  }%
  \label{tab:bvp-green-accuracy}
  \end{center}%
\end{table}

\begin{table}
  \newcommand{\cellcenter}[1]{\multicolumn{1}{c}{#1}}
  \begin{minipage}[t]{0.63\textwidth}
    \vspace{0pt} 
    \centering
\begin{tabular}{rrrrrrrr}
\toprule
\cellcenter{\multirow{2}{*}{$n$}} & \cellcenter{\multirow{2}{*}{$N_S$}} & \cellcenter{\multirow{2}{*}{$M_C$}} & \multicolumn{5}{c}{Percentiles}\\
\cmidrule(lr){4-8} &  &  & \cellcenter{20\%} & \cellcenter{40\%} & \cellcenter{60\%} & \cellcenter{80\%} & \cellcenter{100\%}\\
\midrule
5 & 9735 & 551.8 & 335.0 & 391.0 & 492.0 & 793.2 & 1656.0\\
15 & 52965 & 965.2 & 552.0 & 674.0 & 867.0 & 1228.0 & 7671.0\\
25 & 122925 & 988.2 & 600.0 & 670.0 & 804.0 & 1189.0 & 6516.0\\
35 & 255255 & 867.6 & 601.0 & 656.0 & 752.0 & 1070.0 & 3653.0\\
45 & 392040 & 882.6 & 602.0 & 659.0 & 741.0 & 1113.0 & 3944.0\\
55 & 555390 & 932.1 & 607.0 & 673.0 & 829.0 & 1176.0 & 4135.0\\
65 & 789360 & 921.2 & 604.0 & 659.0 & 801.0 & 1173.0 & 3616.0\\
\bottomrule
\end{tabular}
    \belowtableskip%
    \caption{Values of the parameter $M_C$ for various starfish geometries
      $\gamma_n$
      parametrized by $n$. $N_S$ denotes the number of source quadrature points.
      Shown here are percentiles
      for the distribution of the number of particles in a square of radius
      $8/t_f$ around each QBX center. Here, $t_f = 0.9$. $M_C$ denotes the
      empirical mean of the distribution.
      }%
    \label{tab:mc-results}
  \end{minipage}
  \hfill
  \begin{minipage}[t]{0.35\textwidth}
    \vspace{0pt} 
    \centering
    \begin{tabular}{lc}
      \toprule
      List & Cost \\
      \midrule
      $\ilist{1}{b}$ & $\pqbx n_s n_t$ \\
      $\ilist{2}{b}$ & $\pfmm^2$ \\
      $\ilist{3close}{b}$ & $\pqbx n_s n_t$ \\
      $\ilist{3far}{b}$ & $\pfmm \pqbx n_t$ \\
      $\ilist{4close}{b}$ & $\pqbx n_s n_t$ \\
      $\ilist{4far}{b}$ & $\pfmm n_s$ \\
      \bottomrule
    \end{tabular}
    \belowtableskip%
    \caption{Cost per interaction list entry modeled in
      Figures~\ref{fig:complexity-gigaqbx}
      and~\ref{fig:complexity-qbxfmm}, i.e.\ for a single (source box, target box) interaction
      list pair.  $\pfmm$ = FMM order and $\pqbx$ = QBX order. $n_s$ = number of sources
      in the source box and $n_t$ = number of QBX centers in the target box.}%
    \label{tab:operation-costs}
  \end{minipage}
\end{table}

\begin{figure}[t]
  \centering

  \input{complexity-gigaqbx.pgf}
  \caption{Modeled operation counts for the \algbrand~FMM for evaluating the single
    layer potential on a sequence of `starfish' geometries of increasing particle
    count. The operations are counted according to the model presented in
    Table~\ref{tab:operation-costs}. Here, $\nmax = \nmaxgigaqbx$ and $t_f =
    0.9$. The mean $\ell^\infty$ error in Green's identity across all runs,
    scaled by $1/\|u\|_\infty$, was
    \complexitygigaqbxavgerrorthree~for $\pqbx = 3$ and
    \complexitygigaqbxavgerrorseven~for $\pqbx = 7$.}%
  \label{fig:complexity-gigaqbx}

  \vspace{1ex}
  \input{complexity-qbxfmm.pgf}
  \caption{Modeled operation counts for the conventional QBX FMM
    of~\cite{rachh:2017:qbx-fmm}
    for evaluating the single layer
    potential on a sequence of `starfish' geometries of increasing particle count. The
    operations are counted according to the model presented in
    Table~\ref{tab:operation-costs}. Here, $\nmax = \nmaxqbxfmm$. The mean
    $\ell^\infty$ error in Green's identity across all runs,
    scaled by $1/\|u\|_\infty$, was
    \complexityqbxfmmavgerrorthree~for $\pqbx = 3$ and
    \complexityqbxfmmavgerrorseven~for $\pqbx = 7$.}%
  \label{fig:complexity-qbxfmm}
\end{figure}

\subsection{Cost and Scalability}%
\label{sec:scaling-results}
Having established that the accuracy of layer potentials evaluated \algbrand\ FMM
can be understood with the help of easy-to-use estimates and that high
levels of accuracy can be achieved, we seek to evaluate several aspects of the
computational cost of our algorithm. First and foremost, we examine the
scaling behavior of the scheme to large problem sizes.
Next, we briefly highlight the cost-accuracy trade-off encountered.
Lastly, since our scheme competes with the conventional QBX FMM, we give a
cost comparison between the two approaches.

For the remainder of this section, we use the same family of `starfish' geometries
from~\eqref{eqn:starfish} already familiar to the reader from our accuracy experiments.
More specifically, for a fixed value of the `arm count'
$n$, we begin with the curve $\gamma_n$ discretized into
$\starfishinitialpanelsperarm n$ panels equispaced in the parameter domain with
$\pgfmathtruncatemacro{\pgmres}{\starfishquadorder+1}\pgmres$ nodes per panel,
which was upsampled to
$\pgfmathtruncatemacro{\pgmres}{\starfishtotalquadorder+1}\pgmres$ nodes per panel. We
use values of $n$ ranging from $5$ to $65$ in increments of $10$. Additional refinement
in accordance with Section~\ref{sec:qbx-geometry-preprocessing} was applied if necessary. Ultimately, this family
of geometries ranged in size from about $1.7 \cdot 10^4$ to about $1.4 \cdot 10^6$ particles,
where by `particle' we mean a class of entities including QBX centers,
source quadrature nodes, and targets. We choose to employ this family of geometries with increasing complexity
over, say, a simpler, growing grid of identical geometries because we expect the
resulting scalability data to be credibly applicable to most other scenarios,
including those of the growing grid.

\subsubsection{Factors Influencing Computational Scalability}%
\label{sec:neighborhood-sizes-results}
Ideally, we would like to retain linear scaling of computational cost
with the size of the geometries, as measured in the number of source quadrature
points. Following the discussion of Section~\ref{sec:complexity},
it is not obvious that such scaling necessarily occurs.  Recall
the definition of the model parameter $M_C$, the main use of which is to provide
a worst case bound on the number of direct interactions between source particles
and \emph{suspended} QBX centers. The cost of these interactions in the
\algbrand~FMM is always bounded from above by $O(N_C M_C)$, where $N_C$ is the
number of centers.  For worst-case particle distributions, this cost is
unavoidably quadratic, because $M_C$ can be as large as $N_C$.
Linear scaling of the method will only be seen if $M_C$ does not change
substantially across different-sized geometries.  We expect particle
distributions to which our method is applied to originate from discretizations
of smooth, non-self-intersecting curves, and these are significantly more
regular than an arbitrary particle distribution. Consequently it is conceivable
that we will observe behavior considerably more benign than the worst case.

Although $M_C$ does not depend on the tree, it is nevertheless not immediately
obvious how one might derive a meaningful a-priori bound for $M_C$ for general geometries that
may `loop back' on themselves in the way that (say) the starfish geometries do, bringing
QBX centers into the proximity of source geometry non-adjacent to their `parent'
geometry. To empirically determine the behavior of $M_C$, we wrote a program that
counts the number of source particles within $\closedbox \left( 8r_c/t_f, c \right)$
for each QBX center $c$. $M_C$ is the mean of these counts.
Recall Proposition~\ref{prop:qbx-nn-to-box-nn}, which states that this region
is a superset of the $1$-near neighborhood of $\homebox{c}$, which in turn
represents the region with which a center may need to interact directly.
The results are presented in Table~\ref{tab:mc-results}, including means and
percentiles for the distribution of source particle counts.

As can be seen in the table, the distribution of particles seems to be
heavy-tailed, but with a mean value ($M_C$) of at most 1030 particles, which
does not appear be growing as the number of source particles increases. These
data are consistent with the observation that $M_C$ should not depend on the
number of particles for smooth geometries of adequate refinement.

\subsubsection{Experimental Results on Scaling and Comparative Cost}%
\label{sec:op-counts-results}

In this section, we illustrate the cost of the algorithm with the help of operation
counts. To give a machine-independent understanding of the computational cost of
our algorithm, we modeled computational cost by attributing an operation count
to each entry in the interaction lists.  The cost we attributed to an entry in
each type of interaction lists is summarized in Table~\ref{tab:operation-costs}.
Similarly to the approach of Section~\ref{sec:complexity}, the operation counts
thus obtained are intended to roughly correspond with the number of floating
point operations required.

We chose two FMM/QBX order pairs at which to gather this data for the
\algbrand~FMM, namely $(\pqbx,\pfmm) = (3, 10)$ and $(\pqbx,\pfmm) = (7, 15)$.
These values yield an average of roughly five and six digits of accuracy,
respectively. We show modeled operation counts across a number of `arm counts'
of the `starfish' geometries, as described.  The results are shown in graphical
form in Figure~\ref{fig:complexity-gigaqbx}. In addition to the cost for each
type interaction list, we also show an overall operation count summing the other
contributions, labeled `all'.

The costs in Figure~\ref{fig:complexity-gigaqbx} include the performance
optimization mentioned in Remark~\ref{rem:far-to-close-mpole-optimization}. Our
implementation used a $\ilist{3far}{b}$ interaction only with source boxes
having a cumulative source particle count of $15$ or more. In every case, the
improvement in cumulative operation counts due to this optimization was no more
than $1\%$.

Before entering into a discussion of this data, we introduce a second set of
data for comparison, based on the conventional QBX FMM
of~\cite{rachh:2017:qbx-fmm}.  We applied the same cost model to the QBX FMM in
order to perform an approximate comparison of the cost of the two algorithms. In
order to make the comparison meaningful, we compare the computational cost of
the FMMs for achieving a similar level of accuracy on the Green's identity test
(Section~\ref{sec:accuracy-results}) with a fixed QBX order. Experiments showed
that the (higher) FMM order values of $(\pqbx, \pfmm) = (3,15)$ and
$(\pqbx,\pfmm)=(7, 30)$ resulted in accuracies matching the above for the
conventional QBX FMM\@. We show graphs of computational cost across geometry sizes
analogous to the earlier ones for this data set in
Figure~\ref{fig:complexity-qbxfmm}.

We have tuned the user-chosen parameters for both algorithms to minimize their cost as measured by our
model. (This process is also known as `balancing' an FMM, since it
tends to balance various contributions to the cost.) The main parameter amenable
to such optimization is $\nmax$, the maximum number of particles per box.
We observed that $\nmax$ has different impact on
the performance for the two algorithms. Roughly, the \algbrand~FMM will benefit
from a smaller $\nmax$, as this can potentially decrease the number of direct
interactions. In contrast, the conventional QBX FMM benefits from a larger $\nmax$. The main
reason this is the case is that this reduces the number of boxes/levels in the
tree, and hence the number of multipole-to-local translations. We also observed
a noticeable degradation of accuracy for small $\nmax$ in the conventional QBX FMM, which we
believe may be related to the effect of $\nmax$ on source/target separation. As
a result, we found $\nmax = \nmaxgigaqbx$ for the \algbrand~FMM and $\nmax =
\nmaxqbxfmm$ for the conventional QBX FMM\@ to yield near-minimal modeled cost. We used $t_f
= 0.9$ for the \algbrand~FMM\@.

The linear scaling of both schemes is evident from the slope of the graphs,
with one decade of geometry growth (indicated by the vertical grid lines)
leading to one decade of cost growth (indicated by horizontal grid lines).
As is typical for schemes based on the FMM, the overall cost is dominated by
multipole-to-local translations (List 2/$\ilist{2}{b}$) and direct interactions (List 1/$\ilist{1}{b}$).
Additionally, in the \algbrand~FMM, List 4 close ($\ilist{4close}{b}$; which consists of direct
interactions just like List 1) is also a significant contributor to the cost.

The overall operation counts for the two schemes are roughly comparable, with
$\pqbx = 7$ more closely matching than $\pqbx = 3$. For $\pqbx = 3$, the
\algbrand~FMM has on average $1.43 \times$ as many modeled operations as the QBX
FMM, but for $\pqbx = 7$, it has about $1.10 \times$ as many. In terms of actual
wall times for evaluating the single layer potential, our implementation of the
\algbrand~FMM is on average $17\%$ slower than conventional QBX FMM for $\pqbx
= 3$ and $12\%$ slower for $\pqbx = 7$. As the QBX order increases, we expect the
\algbrand~FMM to maintain its competitiveness, particularly considering the
rapid growth of the FMM orders required to maintain accuracy in the conventional
QBX FMM\@.

Another factor worth highlighting is that the \algbrand~FMM is composed of a
larger number of simpler operations on, typically, lower-order expansions (thus
with relatively short chains of dependent computations within one translation
operation), while the conventional QBX FMM uses fewer higher-complexity operations to translate
expansions of higher order. While, according to our results above, these costs are similar for
sequential execution, we expect that the \algbrand~FMM will be able to make
better use of massively parallel computational resources.

A number of limitations of this study are evident. Both schemes stand
to benefit from standard FMM optimizations that have not been applied, such as,
for instance, using translation operators with asympotically improved
costs~\cite{hrycak:1998:improved-2d-fmm}. Additionally, tuning for the specific
hardware was was not applied when measuring the wall time. Nevertheless, the
results in this section suggest that the two schemes are competitive in terms of
cost.


\section{Conclusions and Future Work}%
\label{sec:conclusions}


In this paper, we have presented a fast algorithm for Quadrature by Expansion for
which we have also supplied analytical accuracy estimates. This algorithm
is compatible with (and builds upon) previous work designed to control for
truncation and quadrature error in QBX~\cite{rachh:2017:qbx-fmm}.
Unlike this and other work on accelerated global QBX, we are able to prove
\emph{strong accuracy guarantees} for our fast high order QBX scheme while
retaining a cost comparable with or cheaper than previous schemes.
We have demonstrated the viability of our approach
through numerical experiments. Lastly, we have provided a set of sufficient conditions
under which the algorithm exhibits linear scaling and also shown that in
practice the algorithm scales linearly on complicated geometries.

Traditional hierarchical algorithms developed for $n$-body problems have
considered \emph{point} (i.e., zero-dimensional) sources and targets. An
important feature of our work is the recognition that local expansions behave
like `targets with extent' from the point of view of the accuracy of
translation operators. It is possible to view this work purely in
this context, removed from QBX\@: As a fast algorithm that permits targets with
extent.

Many exciting avenues for future work open up building upon this contribution:
First, the cost estimates of Section~\ref{sec:complexity} are inherently
pessimistic because they do not leverage very much information about the
particle distribution. Furthermore, they are also conservative when it comes to
constants. Sharpening these estimates, perhaps with a more detailed
understanding of the typical particle distribution of a source curve, will help
provide a better understanding of the cost of our algorithm along with ideas to
reduce said cost.  Another direction of work is to apply the techniques of the
error analysis used in this paper in order to understand analytically the
accuracy behavior for the global QBX FMM of~\cite{rachh:2017:qbx-fmm}.
Preliminary results along these lines are encouraging.  Lastly, we are in the
process of extending the FMM developed in this paper to more kernels in two and
three dimensions, with the goal of providing black-box, fast, and accurate layer
potential evaluation for any kernel for which FMM translation infrastructure is
available.


\section*{Acknowledgments}
The authors' research was supported by the National Science Foundation under
grants DMS-1418961 and DMS-1654756 and by the University of Illinois.
Part of the work was performed while the authors were participating in
the HKUST-ICERM workshop `Integral Equation Methods, Fast
Algorithms and Their Applications to Fluid Dynamics and Materials
Science' held in 2017.

We thank Hao~Gao for bringing to our attention the software issue leading to the
miscounting of List~4~close in the numerical results (see~\ref{sec:corrigendum}).

\appendix

\section{Software and Reproducibility}%
\label{sec:software}

To allow other researchers to precisely verify our claimed results, we have
prepared a Docker image from which all results from Section~\ref{sec:results}
can be automatically reproduced with included software and scripts. This is
available at \url{https://doi.org/10.5281/zenodo.3483367}. Alternatively, the
code for the experiments is available
at~\url{https://doi.org/10.5281/zenodo.3483391}.

\section{Corrigendum to `\papertitle'}%
\label{sec:corrigendum}

Since the publication of `\papertitle', it has come to our
attention that the code used to obtain the numerical results reported in the
paper contains a number of errors. Additionally, we found that one portion of
the description of how the initial assumptions of the algorithm in the paper are
assured requires a correction. We have updated this manuscript appropriately.
In this appendix we summarize the necessary
corrections for the affected portions of the paper. In our judgment, none of
these changes presented here affect the conclusions of the paper.

\subsection*{Numerical Experiments}%

The errors in the code for the numerical experiments affect
Tables~\ref{tab:starfish-accuracy},~\ref{tab:bvp-green-accuracy},~and~\ref{tab:mc-results};
Figures~\ref{fig:complexity-gigaqbx}~and~\ref{fig:complexity-qbxfmm}; and some
of the values reported in Section~\ref{sec:op-counts-results}. We next list these
errors and discuss their implications on the results.

\paragraph*{Incorrect TCR and Separation Criteria.}

Due to an oversight, the experiments involving the GIGAQBX~algorithm used an
incorrectly shaped target confinement region. The experiments should have used a
`box-shaped,' or $\ell^\infty$~TCR, but due to an incorrect parameter setting
the experiments were run with a `disk-shaped,' or $\ell^2$~TCR, a modification
which is described in a later paper
(see~\url{https://arxiv.org/abs/1805.06106}). The criterion for `adequate
separation' of boxes did not follow the paper definition and was corrected. This
issue affects both reported accuracy and costs of the GIGAQBX~scheme. Compared
with the $\ell^2$~TCR, the use of the $\ell^\infty$~TCR results in an algorithm
with slightly higher accuracy and computational cost when other parameters are
held the same.

\paragraph*{Inconsistent Version of Algorithm.}

The results in Table~\ref{tab:bvp-green-accuracy} were obtained using a different
geometric refinement procedure than that used in the rest of the paper. This
inconsistency has been addressed by this correction and is the reason that that
the corrected values of the minimum attainable Green error in
Table~\ref{tab:bvp-green-accuracy} are somewhat greater than the originally
reported values.

\paragraph*{Issues in Cost Comparison.}

Two errors were present in the code used for the cost comparison study in
Section~\ref{sec:op-counts-results}. First, the cost of the stage associated
with List~4~close was undercounted, affecting the results in
Figure~\ref{fig:complexity-gigaqbx} under the label `$\ilist{4close}{b}$', but
not the results of Figure~\ref{fig:complexity-qbxfmm}. Second,
Stages~2,~7,~and~8 of the algorithm were overcounted. This latter overcounting
affects the total cost (labeled `all') in both figures, but not any other data
points. After correcting these errors, we put in place a verification scheme
that compares the computed modeled cost with independently obtained reference
results.  Taking into account these corrections, our results show that across
these geometries the GIGAQBX~FMM is about 9\% more expensive than originally
reported, while the QBX~FMM is about 6\% less expensive than originally
reported. The text in Section~\ref{sec:op-counts-results} was updated to reflect
these changes.

\paragraph*{Incorrectly Counted Particle Distribution.}%

Finally, an error in the computation of particle distances led to an overcount
of the values in Table~\ref{tab:mc-results}. The values presented in this table
report statistics about the particle distribution of the discretized geometries
used in the numerical experiments. The corrected version shows that the
particles are slightly less densely distributed, while still supporting our
claim that the parameter $M_C$ does not vary substantially across geometries.

\subsection*{Refinement Procedure}%

The GIGAQBX FMM described requires that the geometry discretization satisfies
certain assumptions informally stated in
Section~\ref{sec:qbx-geometry-preprocessing} to control for sources of
quadrature and truncation error. Section~\ref{sec:input-geometry-preprocessing}
stated that the geometric refinement procedure from Section~5
of~\cite{rachh:2017:qbx-fmm} was required to process the geometry to satisfy
these assumptions.  The description in
Section~\ref{sec:input-geometry-preprocessing} should merely have required that
Conditions~1,~3,~and~4 of~\cite{rachh:2017:qbx-fmm} are satisfied without
requiring a specific procedure. In our numerical experiments, we used an
improved version of the algorithm from~\cite{rachh:2017:qbx-fmm}. We refer to
our code (see~\ref{sec:software}) for details.  We report on an evolved
version of this algorithm in Section~3 of~\url{https://arxiv.org/abs/1805.06106}.

\printbibliography{}

\end{document}
